\newcommand*{\rom}[1]{\expandafter\@slowromancap\romannumeral #1@}
\theoremstyle{plain}
\newtheorem{thm}{Theorem}[section]
\newtheorem*{thm*}{Main Theorem}
\newtheorem{prop}[thm]{Proposition}
\newtheorem{cor}[thm]{Corollary}
\newtheorem{lmm}[thm]{Lemma}
\theoremstyle{definition}
\newtheorem{dfn}[thm]{Definition}
\newtheorem{rmk}[thm]{Remark}
\newtheorem{ex}[thm]{Example}
      \def\@setcopyright{}
      \def\serieslogo@{}
\begin{document}

\author{Luca Sodomaco}
\address{Dipartimento di Matematica e Informatica ``Ulisse Dini'', University of Florence, Italy}
\email{luca.sodomaco@unifi.it}
\subjclass[2000]{14M20, 15A18, 15A69, 15A72, 65H17}

\title{\textbf{The product of the eigenvalues of a symmetric tensor}}

\begin{abstract}
\noindent We study E-eigenvalues of a symmetric tensor $f$ of degree $d$ on a finite-dimensional Euclidean vector space $V$, and their relation with the E-characteristic polynomial of $f$. We show that the leading coefficient of the E-characteristic polynomial of $f$, when it has maximum degree, is the $(d-2)$-th power (respectively the $((d-2)/2)$-th power) when $d$ is odd (respectively when $d$ is even) of the $\widetilde{Q}$-discriminant, where $\widetilde{Q}$ is the $d$-th Veronese embedding of the isotropic quadric $Q\subseteq\mathbb{P}(V)$. This fact, together with a known formula for the constant term of the E-characteristic polynomial of $f$, leads to a closed formula for the product of the E-eigenvalues of $f$, which generalizes the fact that the determinant of a symmetric matrix is equal to the product of its eigenvalues.
\end{abstract}

\maketitle

\section{Introduction}

Let $(V,\langle\cdot\hspace{0.6mm},\cdot\rangle)$ be a real $(n+1)$-dimensional Euclidean space and denote with $\|\cdot\|$ the norm induced by $\langle\cdot\hspace{0.6mm},\cdot\rangle$. Our object of study is the vector space $\operatorname{Sym}^dV$ of degree $d$ symmetric tensors on $V$. An excellent reference for the algebraic geometry for spaces of tensors is \cite{landsberg2011tensors}. Any element $f\in\operatorname{Sym}^dV$ can be treated in coordinates as an element of $\mathbb{R}[x_1,\ldots,x_{n+1}]_d$, namely a degree~$d$ homogeneous polynomial in the indeterminates $x_1,\ldots,x_{n+1}$. The projective hypersurface defined by the vanishing of $f$ is denoted by $[f]$.

The notions of \emph{E-eigenvalue} and \emph{E-eigenvector} of a symmetric tensor were proposed independently by Lek-Heng Lim and Liqun Qi in \cite{lim2005singular,qi2005eigenvalues} in the more general setting of $(n+1)$-dimensional tensors of order~$d$, namely elements of $V^{\otimes d}$. There are different types of eigenvectors and eigenvalues in the literature, see \cite{cartwright2013number,hu2013determinants,ni2007degree,qi2007eigenvalues,qi2017tensor}. Although the notions of E-eigenvalues and E-eigenvectors of tensors arise mainly in the context of approximation of tensors, which deals usually with real tensors, for our investigations we need to extend the Euclidean space $(V,\langle\cdot\hspace{0.6mm},\cdot\rangle)$ to its complexification $(V^{\mathbb{C}},\langle\cdot\hspace{0.6mm},\cdot\rangle_{\mathbb{C}})$. In the following, we will use the same notation in $V$ and $V^{\mathbb{C}}$ for the bilinear form $\langle\cdot\hspace{0.6mm},\cdot\rangle$ and the norm $\|\cdot\|$.

\begin{dfn}\label{def eigenvalue eigenvector}
Given $f\in\operatorname{Sym}^dV$, a non-zero vector $x\in V^{\mathbb{C}}$ such that $\|x\|=1$ is called an \emph{E-eigenvector} of $f$ (where the ``E'' stands for ``Euclidean'') if there exists $\lambda\in\mathbb{C}$ such that $x$ is a solution of the equation
\begin{equation}\label{new interpretation eigenvectors}
\frac{1}{d}\nabla f(x)=\lambda x.
\end{equation}
The scalar $\lambda$ corresponding to $x$ is called an \emph{E-eigenvalue} of $f$, while the pair $(\lambda,x)$ is called an \emph{E-eigenpair} of $f$. The corresponding power $x^d\in\operatorname{Sym}^dV^{\mathbb{C}}$ is called an \emph{E-eigentensor} of $f$. In particular, for even order~$d$, $(\lambda,x)$ is an E-eigenpair of $f$ if and only if $(\lambda,-x)$ is so; for odd order~$d$, $(\lambda,x)$ is an E-eigenpair of $f$ if and only if $(-\lambda,-x)$ is so. If $x\in V^{\mathbb{C}}$ is a solution of (\ref{new interpretation eigenvectors}) such that $\|x\|=0$, we call~$x$ an \emph{isotropic eigenvector} of $f$.
\end{dfn}

The factor $\frac{1}{d}$ appearing in (\ref{new interpretation eigenvectors}) follows the notation in \cite{qi2005eigenvalues} conformed to the symmetric case. Observe that, if $(\lambda,x)$ satisfies (\ref{new interpretation eigenvectors}), then $(\alpha^{d-2}\lambda,\alpha x)$ satisfies (\ref{new interpretation eigenvectors}) for any non-zero $\alpha\in\mathbb{C}$. This is why we impose the additional quadratic equation $\|x\|=1$ in Definition \ref{def eigenvalue eigenvector}. When $d=2$, the definition of E-eigenvalue and E-eigenvector is not the same as the standard definition of eigenvalue and eigenvector of a symmetric matrix, as a non-zero complex vector $x$ satisfying $\|x\|=0$ is excluded in the definition of E-eigenvector.

In this article we investigate a fundamental tool for computing the E-eigenvalues of a symmetric tensor, namely its \emph{E-characteristic polynomial}. We recall its definition (for the definition of the resultant of $m$~homogeneous polynomials in $m$ variables see Section \ref{section: preliminaries}).

\begin{dfn}\label{characteristic polynomial}
Given $f\in\operatorname{Sym}^dV$, when $d$ is even the \emph{E-characteristic polynomial} $\psi_f$ of $f$ is defined by $\psi_f(\lambda)\coloneqq\operatorname{Res}(F_{\lambda})$, where $\lambda\in\mathbb{C}$ and $\operatorname{Res}(F_{\lambda})$ is the resultant of the $(n+1)$-dimensional vector
\begin{equation}\label{def charpoly even}
F_{\lambda}(x)\coloneqq
\frac{1}{d}\nabla f(x)-\lambda \|x\|^{d-2}x.
\end{equation}
When $d$ is odd, the E-characteristic polynomial is defined as $\psi_f(\lambda)\coloneqq\operatorname{Res}(G_{\lambda})$, where $\operatorname{Res}(G_{\lambda})$ is the resultant of the $(n+2)$-dimensional vector
\begin{equation}\label{def charpoly odd}
G_{\lambda}(x_0,x)\coloneqq
\begin{pmatrix}
x_0^2-\|x\|^2\\
\frac{1}{d}\nabla f(x)-\lambda x_0^{d-2}x
\end{pmatrix}.
\end{equation}
\end{dfn}
For $d=2$, the E-characteristic polynomial agrees with the characteristic polynomial of a symmetric matrix $A$, namely $\psi_A(\lambda)=\det(A-\lambda I)$. In this case, the roots of $\psi_A$ are all the eigenvalues of $A$, and if the entries of $A$ are real, then the roots of $\psi_A$ are all real by the Spectral Theorem. Moreover, the leading coefficient of $\psi_A$ is 1, implying that its constant term is equal to the product of the eigenvalues of $A$, that is the determinant of $A$.

The interesting fact is that this happens only for $d=2$: as we will see throughout the paper, given $f\in\operatorname{Sym}^dV$ with $d>2$, then some of the roots of the E-characteristic polynomial $\psi_f$ may not be real even though the coefficients of $f$ are real. However, there exist symmetric tensors with only real E-eigenvalues, as shown by Maccioni in \cite{maccioni1972number} and Kozhasov in \cite{kozhasov2017fully}. Moreover, the leading coefficient of $\psi_f$ is a homogeneous polynomial over $\mathbb{Z}$ in the coefficients of $f$ with positive degree for $d>2$. 

Our main result describes the product of the E-eigenvalues of a symmetric tensor $f$ when $\psi_f$ has maximum degree. In the following, $\widetilde{Q}$ denotes the Veronese embedding of the \emph{isotropic quadric} $Q\coloneqq\{\|x\|^2=x_1^2+\cdots+x_{n+1}^2=0\}\subseteq\mathbb{P}^n$, whereas $\Delta_{\widetilde{Q}}(f)$ is the $\widetilde{Q}$-\emph{discriminant} of $f$. For the definition of the polynomial $\Delta_{\widetilde{Q}}(f)$ see Section \ref{section: preliminaries}. Moreover, for all $n\geq 1$ we define the integer $N\coloneqq n+1$ for $d=2$, whereas $N\coloneqq((d-1)^{n+1}-1)/(d-2)$ for $d\geq 3$.
\begin{thm*}\label{the theorem}
Consider a real symmetric tensor $f\in\operatorname{Sym}^dV$ for $d\geq 2$. If $f$ admits the maximum number $N$ of E-eigenvalues (counted with multiplicity), then their product is
\begin{equation}\label{product eigenvalues of the theorem}
\lambda_1\cdots\lambda_N=\pm\frac{\operatorname{Res}\left(\frac{1}{d}\nabla f\right)}{\Delta_{\widetilde{Q}}(f)^{\frac{d-2}{2}}}.
\end{equation}
\end{thm*}
For the proof of the Main Theorem see Section \ref{section: leading coefficient}. We note (see Lemma \ref{lemma 7.2 cinesi}) that the assumption of the Main Theorem is satisfied for a general $f$, and it corresponds geometrically to the fact that the hypersurface $[f]$ is transversal to $Q$ (see Remark \ref{rmk: on transversality}). The formula (\ref{product eigenvalues of the theorem}) generalizes to the class of symmetric tensors the known fact that the determinant of a symmetric matrix is the product of its eigenvalues. In particular, we underline that the polynomial $\operatorname{Res}\left(\frac{1}{d}\nabla f\right)$ appearing in the numerator of (\ref{product eigenvalues of the theorem}) is equal to the classical \emph{discriminant} $\Delta_d(f)$ of $f$ times a constant factor (for the definition of discriminant of a homogeneous polynomial and a relation between the polynomials $\operatorname{Res}\left(\frac{1}{d}\nabla f\right)$ and $\Delta_d(f)$ see \cite[Proposition \rom{13}, 1.7]{gelfand2008discriminants}).

Among all the preliminary facts needed for the proof of the Main Theorem, we want to stress two of them in particular. First of all, the E-eigenvalues of $f\in\operatorname{Sym}^dV$ are roots of $\psi_f$, but the converse is true only for \emph{regular} symmetric tensors (see Definition \ref{irregular tensor} and \cite[Theorem 4]{qi2007eigenvalues}). The second fact is due to Cartwright and Sturmfels (see \cite[Theorem 5.5]{cartwright2013number}).

\begin{thm}[Cartwright-Sturmfels]\label{expected number of E-eigenvalues}
Every symmetric tensor $f\in\operatorname{Sym}^dV$ has at most $N$ distinct E-eigenvalues when $d$ is even, and at most $N$ pairs $(\lambda,-\lambda)$ of distinct E-eigenvalues when $d$ is odd. This bound is attained for general symmetric tensors.
\end{thm}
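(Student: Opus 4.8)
The plan is to translate the count of E-eigenvalues into a count of fixed points of the gradient map $\tfrac1d\nabla f$ on $\mathbb{P}^n$, and then to control the contribution of the isotropic quadric. We pass to the complexification and set $\mathbb{P}^n=\mathbb{P}(V^{\mathbb{C}})$. Eliminating $\lambda$ from (\ref{new interpretation eigenvectors}), an E-eigenvector of $f$ (forgetting the normalization $\|x\|=1$) is the same as a point $[x]\in\mathbb{P}^n$ at which the $2\times(n+1)$ matrix with rows $x$ and $\nabla f(x)$ has rank $\leq 1$ and $\|x\|^2\neq 0$; equivalently, $[x]$ is a fixed point off $Q$ of the gradient map $\nabla f\colon\mathbb{P}^n\dashrightarrow\mathbb{P}^n$, which has degree $d-1$. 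Pairing (\ref{new interpretation eigenvectors}) with $x$ and using Euler's identity $\langle\nabla f(x),x\rangle=df(x)$ yields $\lambda=f(x)/\|x\|^2$, so the E-eigenvalue is a regular function of the eigenvector on $\mathbb{P}^n\setminus Q$. Since $f$ has degree $d$, for even $d$ the two unit-norm lifts $\pm u$ of a projective eigenvector produce the same $\lambda$, while for odd $d$ they produce $\lambda$ and $-\lambda$; so counting distinct E-eigenvalues (even $d$), respectively distinct pairs $(\lambda,-\lambda)$ (odd $d$), reduces to counting projective eigenvectors off $Q$, \emph{provided} the map ``eigenvector $\mapsto$ eigenvalue'' is injective, which I address below.

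First I would establish the generic count. The composite of the section $\tfrac1d\nabla f\in H^0(\mathbb{P}^n,\mathcal{O}(d-1)^{\oplus(n+1)})$ with the surjection $\mathcal{O}(d-1)^{\oplus(n+1)}\twoheadrightarrow T_{\mathbb{P}^n}(d-2)=T_{\mathbb{P}^n}\otimes\mathcal{O}(d-2)$ obtained by twisting the Euler sequence by $\mathcal{O}(d-2)$ is a section $s_f\in H^0(\mathbb{P}^n,T_{\mathbb{P}^n}(d-2))$ whose zero scheme is exactly the set of projective eigenvectors of $f$. For general $f$ this section is transverse, so its zeros are reduced and their number is
\[
\int_{\mathbb{P}^n}c_n\bigl(T_{\mathbb{P}^n}(d-2)\bigr)=\sum_{i=0}^{n}\binom{n+1}{i}(d-2)^{\,n-i},
\]
which by the binomial theorem equals $n+1$ for $d=2$ and $\bigl((d-1)^{n+1}-1\bigr)/(d-2)$ for $d\geq 3$, i.e.\ it equals $N$. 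It then remains to see that, for general $f$, none of these $N$ eigenvectors lies on $Q$. If $[x]\in Q$ is an eigenvector, then $f(x)=\tfrac1d\langle\nabla f(x),x\rangle=0$, so $[x]\in[f]\cap Q$. I would rule this out by a dimension count on the incidence variety $\Phi\coloneqq\{(f,[x]):[x]\in Q,\ \nabla f(x)\parallel x\}$: for each fixed $[x]\neq 0$ the linear map $f\mapsto\nabla f(x)$ from $\operatorname{Sym}^dV^{\mathbb{C}}$ to $\mathbb{C}^{n+1}$ is surjective, so the fibre of $\Phi\to Q$ over $[x]$ is a linear subspace of codimension exactly $n$; hence $\Phi$ is irreducible of dimension $(n-1)+(\dim\operatorname{Sym}^dV^{\mathbb{C}}-n)=\dim\operatorname{Sym}^dV^{\mathbb{C}}-1$, so $\Phi\to\operatorname{Sym}^dV^{\mathbb{C}}$ is not dominant and the general $f$ has no eigenvector on $Q$. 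Thus for general $f$ there are exactly $N$ genuine (normalizable) E-eigenvectors.

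The injectivity of ``eigenvector $\mapsto$ eigenvalue'' for general $f$ is an open condition, so it suffices to check it on a single tensor; alternatively, the $N$-sheeted cover of eigenvectors over the locus of $f$ having $N$ reduced eigenvectors off $Q$ has transitive (in fact full-symmetric) monodromy, so an identical coincidence of two of the eigenvalue branches would force all $N$ of them to coincide and $\psi_f$ to be a power of a linear polynomial for every $f$, which is absurd. Granting this, the general $f$ has exactly $N$ distinct E-eigenvalues (even $d$), respectively $N$ distinct pairs $(\lambda,-\lambda)$ (odd $d$), so the bound is attained.

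There remains the upper bound $N$ for an \emph{arbitrary} symmetric tensor, and this is the step I expect to be the real obstacle. For special $f$ the eigenscheme can be positive-dimensional and $\psi_f$ can vanish identically (already $f=\|x\|^d$ with $d\geq 3$ has this property), so neither the Chern-class count nor the inequality $\#\{\text{E-eigenvalues}\}\leq\deg\psi_f$ applies directly. I would push it through by forming the closure $\mathcal{E}\subseteq\operatorname{Sym}^dV^{\mathbb{C}}\times\mathbb{P}^1$ of the set of pairs $(f,[\lambda:1])$ with $\lambda$ an E-eigenvalue of $f$; being the image of a closed subset of the proper scheme $\operatorname{Sym}^dV^{\mathbb{C}}\times\mathbb{P}^n\times\mathbb{P}^1$, the projection $\mathcal{E}\to\operatorname{Sym}^dV^{\mathbb{C}}$ is proper, its general fibre consists of $N$ finite points by the above, and one must prove that no fibre acquires more than $N$ finite points in the limit — equivalently, that the only new points appearing over a special $f$ are $[1:0]$ or eigenvalues carried off by eigenvectors that have degenerated onto $Q$. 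Making this limiting argument precise (resolving the indeterminacy of the eigenvalue map along $Q$ and along the locus where $\psi_f\equiv 0$) is the crux; everything preceding it is a standard Chern-class computation together with transversality and dimension counts.
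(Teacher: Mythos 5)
A preliminary remark: the paper does not prove this statement at all; it quotes it from Cartwright--Sturmfels \cite{cartwright2013number} and points to the Chern-class proof of Oeding--Ottaviani, which is exactly the argument you sketch for the generic count. That part of your proposal is correct: the section $s_f\in H^0(\mathbb{P}^n,T_{\mathbb{P}^n}(d-2))$ induced by $\nabla f$ via the twisted Euler sequence has the projective eigenscheme as its zero locus, $\int c_n(T_{\mathbb{P}^n}(d-2))=N$, and your incidence-variety dimension count showing that a general $f$ has no eigenvector on $Q$ is fine. Two small repairs: the eigenvalue of the normalized lift is $\lambda=f(x)/\|x\|^{d}$, not $f(x)/\|x\|^{2}$ (it is $\lambda^{2}=f(x)^{2}/q(x)^{d}$ that is a degree-zero, hence regular, function on $\mathbb{P}^n\setminus Q$, which is what makes the $(\lambda,-\lambda)$ bookkeeping for odd $d$ work); and the distinctness of the $N$ eigenvalues for general $f$ is most cheaply obtained not from an unproved $2$-transitive monodromy claim but from a single example, e.g.\ the scaled Fermat polynomials of Lemma \ref{scaled Fermat polynomials}, whose $N$ eigenvalues are written down explicitly and are pairwise distinct for generic coefficients.

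The genuine gap is the one you flag yourself: the bound for an \emph{arbitrary} $f$, which is part of the statement. Your plan --- take the closure $\mathcal{E}$ of the generic eigenvalue multisection and invoke properness --- does not close it. Properness only says the special fibre of $\mathcal{E}$ is closed; it could be infinite, and even when finite it has at most $N$ points only after a Stein-factorization argument you do not make. More seriously, you would still have to show that every E-eigenvalue of a special $f$ lies in that fibre, i.e.\ arises as a limit of E-eigenvalues of nearby general tensors, and nothing in the sketch does this. The missing ingredient, which is the heart of Cartwright--Sturmfels' proof, is that for a fixed $f$ the eigenvalue function is \emph{locally constant on the eigenvariety}: writing $\lambda=f(x)\,q(x)^{-d/2}$ and substituting the eigenvector equation $\nabla f(x)=d\,f(x)q(x)^{-1}x$ into
\[
\nabla\lambda=q^{-d/2}\nabla f(x)-d\,f(x)\,q^{-d/2-1}x
\]
gives $\nabla\lambda=0$ identically along the eigenscheme, so $\lambda$ (respectively $\lambda^2$) takes a single value on each connected component of the eigenvariety off $Q$. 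Combined with the fact that the global incidence variety $\{(f,[x]):s_f([x])=0\}$ is irreducible and generically finite of degree $N$ over $\operatorname{Sym}^dV^{\mathbb{C}}$ --- so that every fibre has at most $N$ connected components --- this yields the bound for every $f$, including those with positive-dimensional eigenvariety or identically vanishing $\psi_f$, which are precisely the cases where your Chern-class count and the inequality against $\deg\psi_f$ both fail. Without this lemma your argument establishes only the generic statement (which, to be fair, is all the present paper actually uses downstream).
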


This fact was previously conjectured in \cite{ni2007degree}. In \cite{oeding2013eigenvectors}, Oeding and Ottaviani review Cartwright and Sturmfels' formula and propose an alternative geometric proof based on Chern classes, with various generalizations. However, this result had already essentially been known in complex dynamics due to Forn\ae ss and Sibony, who in \cite{fornaess1994complex} discuss global questions of iteration of rational maps in higher dimension.

These two results combined together show that the degree of the E-characteristic polynomial $\psi_f$ is equal to $N$ (or $2N$, depending on $d$ even or odd), whereas it is smaller than the ``expected'' one exactly when $f$ admits at least an isotropic eigenvector. This in particular motivated our research on the geometric meaning of the vanishing of the leading coefficient of $\psi_f$. Therefore the Main Theorem describes that, if the coefficients of $f$ annihilate the polynomial $\Delta_{\widetilde{Q}}(f)$, then some of the E-eigenvalues of $f$ have gone ``to infinity'': in practice, $f$ admits at least an isotropic eigenvector whose corresponding eigenvalue does not appear as a root of the E-characteristic polynomial~$\psi_f$. We stress that both numerator and denominator in (\ref{product eigenvalues of the theorem}) are orthogonal invariants of $f$, namely polynomials in the coefficients of $f$ that are invariant under the orthonormal linear changes of coordinates in $f$. The product of the E-eigenvalues of $f$ is \emph{a priori} equal to the right-hand side of (\ref{product eigenvalues of the theorem}) times a constant factor depending only on $n$ and $d$. Using the definitions of resultant and $\widetilde{Q}$-\emph{discriminant}, we prove that this constant factor is (in absolute value) 1 by specializing to the family of \emph{scaled Fermat polynomials}. However, the identity (\ref{product eigenvalues of the theorem}) is given up to sign since the definition of E-eigenvalue has this sign ambiguity.

This paper is organized as follows. After setting the notation, in Section \ref{section: preliminaries} we give some basic notions on resultants and on the dual of a hypersurface. In Section \ref{section: characteristic polynomial}, we recall the properties of the E-eigenvectors and the isotropic eigenvectors of a symmetric tensor $f$, and the first known properties of its E-characteristic polynomial $\psi_f$. In particular we point out that the coefficients of $\psi_f$ are orthogonal invariants of~$f$, and recall the fact proved in \cite{li2012characteristic} that the constant term of $\psi_f$ is equal (up to a constant factor) to the resultant of $\frac{1}{d}\nabla f$, for $d$ even, or equal to the square of the resultant of $\frac{1}{d}\nabla f$, for $d$ odd. Section \ref{section: leading coefficient} is devoted to the proof of the Main Theorem, before restating some useful facts from \cite{holme1988geometric,li2012characteristic}. Finally, in the first part of Section \ref{section: examples} we focus on the case of binary forms and rephrase some remarkable results in \cite{li2012characteristic}, whereas in the second part we stress with a concrete example how the presence of an isotropic eigenvector of $f\in\operatorname{Sym}^dV$ affects the geometry of the hypersurface $[f]$.

\section{Preliminaries}\label{section: preliminaries}

Consider the a real $(n+1)$-dimensional Euclidean space $(V,\langle\cdot\hspace{0.6mm},\cdot\rangle)$, where $\langle\cdot\hspace{0.6mm},\cdot\rangle:V\times V\to\mathbb{R}$ is a positive definite symmetric bilinear form on $V$. The quadratic form associated to $\langle\cdot\hspace{0.6mm},\cdot\rangle$ is $q:V\to\mathbb{R}$ defined by $q(x)\coloneqq\langle x,x\rangle=\|x\|^2$. The set of automorphisms $A\in\operatorname{Aut}(V)$ that preserve $\langle\cdot\hspace{0.6mm},\cdot\rangle$, i.e., such that $\langle Ax,Ay\rangle=\langle x,y\rangle$ for all $x,y\in V$, forms the orthogonal group $\operatorname{O}(V)$ and is a subgroup of $\operatorname{Aut}(V)$. The special orthogonal group $\operatorname{SO}(V)$ is defined as the set of all $A$ in $\operatorname{O}(V)$ with determinant 1. An $\operatorname{SO}(V)$-invariant (or orthogonal invariant) for $f\in\operatorname{Sym}^dV$ is a polynomial in the coefficients of~$f$ that does not vary under the action of $\operatorname{SO}(V)$ on the coefficients of~$f$, where the above-mentioned action is the one induced by the linear action of $\operatorname{SO}(V)$ on the coordinates of $V$. If we fix a basis on $V$, we identify $V$ with $\mathbb{R}^{n+1}$ and we consider the bilinear form defined by $\langle x,y\rangle=x_1y_1+\cdots+x_{n+1}y_{n+1}$ for all $x=(x_1,\ldots,x_{n+1})$, $y=(y_1,\ldots,y_{n+1})\in\mathbb{R}^{n+1}$: in this case, the associated quadratic form is $q(x)=\|x\|^2=x_1^2+\cdots+x_{n+1}^2$ for all $x=(x_1,\ldots,x_{n+1})\in\mathbb{R}^{n+1}$.

The bilinear symmetric form $\langle\cdot\hspace{0.6mm},\cdot\rangle$ can be extended to a bilinear symmetric form on $\operatorname{Sym}^dV$. Given $x,y\in V$ and the corresponding $d$-th powers $x^d$, $y^d$, we set $\langle x^d,y^d\rangle\coloneqq\langle x, y\rangle^d$. By linearity this defines~$\langle\cdot\hspace{0.6mm},\cdot\rangle$ on the whole $\operatorname{Sym}^dV$. In particular, given $f,g\in\operatorname{Sym}^dV$, we define the \emph{norm} of $f$ by $\|f\|\coloneqq\sqrt{\langle f,f\rangle}$ and the distance function between $f$ and $g$ by $d(f,g)\coloneqq\|f-g\|=\sqrt{\langle f-g,f-g\rangle}$.

We denote by $v_{n,d}\colon\mathbb{P}(V)\to\mathbb{P}(\operatorname{Sym}^dV)$ the Veronese map, which sends $[x]\in\mathbb{P}(V)$ to $[x^d]\in\mathbb{P}(\operatorname{Sym}^dV)$. The image of $v_{n,d}$ is denoted by $V_{n,d}$ and is the projectivization of the subset of $(\operatorname{Sym}^dV)^{\vee}$ consisting of $d$-th powers of linear polynomials or, in other words, rank one symmetric tensors on $V$. By definition, a \emph{critical rank one symmetric tensor} for $f\in\operatorname{Sym}^dV$ is a critical point of the distance function from $f$ to the affine cone over $V_{n,d}$.

The following proposition is well known and defines the notion of \emph{resultant} of a set of $m$ homogeneous polynomials in $m$ variables (see \cite{cox2006using,gelfand2008discriminants}).

\begin{prop}\label{def resultant}
Let $f_1,\ldots,f_m$ be $m$ homogeneous polynomials of positive degrees $d_1,\ldots,d_m$ respectively in the variables $x_1,\ldots,x_m$. Then there is a unique polynomial $\operatorname{Res}(f_1,\ldots,f_m)$ over $\mathbb{Z}$ in the coefficients of $f_1,\ldots,f_m$ such that
\begin{itemize}
\item[$i)$] $\operatorname{Res}(f_1,\ldots,f_m)=0$ if and only if the system $f_1=\cdots=f_m=0$ has a solution in $\mathbb{P}^{m-1}$.
\item[$ii)$] $\operatorname{Res}\left(\frac{1}{d}\nabla f\right)=(a_1\cdots a_m)^{(d-1)^{m-1}}$, where $f(x_1,\ldots,x_m)=a_1x_1^{d}+\cdots+a_mx_m^{d}$, $a_1,\ldots,a_m\in\mathbb{C}$, is the scaled Fermat polynomial.
\item[$iii)$] $\operatorname{Res}(f_1,\ldots,f_m)$ is irreducible, even when regarded as a polynomial over $\mathbb{C}$ in the coefficients of $f_1,\ldots,f_m$.
\end{itemize}
\end{prop}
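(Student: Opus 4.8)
The plan is to reconstruct the multivariate resultant by the classical elimination-theoretic route (compare \cite{cox2006using,gelfand2008discriminants}). Let $N$ be the number of coefficients of a generic $m$-tuple $F=(F_1,\dots,F_m)$ with $\deg F_i=d_i$, write $u$ for the tuple of these coefficients, regarded as coordinates on $\mathbb{A}^N$, and form the incidence variety
\[
  W\coloneqq\bigl\{(u,[x])\in\mathbb{A}^N\times\mathbb{P}^{m-1} : F_1(x)=\cdots=F_m(x)=0\bigr\}.
\]
The key structural remark is that the second projection exhibits $W$ as a fibration over $\mathbb{P}^{m-1}$ whose fibre over a point $[x_0]$ is the linear subspace of $\mathbb{A}^N$ cut out by the $m$ (independent, since $x_0\neq0$) linear equations $F_i(x_0)=0$; hence $W$ is irreducible of dimension $N-1$, and it remains irreducible after base change to $\mathbb{C}$.

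Next I would eliminate the variable $[x]$: by the main theorem of elimination theory, the image of $W$ under the first projection $\pi\colon\mathbb{A}^N\times\mathbb{P}^{m-1}\to\mathbb{A}^N$ is Zariski closed, and by construction $\pi(W)$ is exactly the set of tuples $f=(f_1,\dots,f_m)$ for which $f_1=\cdots=f_m=0$ has a solution in $\mathbb{P}^{m-1}$; this gives $i)$ once we take $\operatorname{Res}$ to be a defining equation of $\pi(W)$. Since the tuple $(x_1^{d_1},\dots,x_m^{d_m})$ has no common projective zero, $\pi(W)\neq\mathbb{A}^N$ is a proper closed subset; as a general $m$-tuple of forms has empty common zero locus while a general element of $\pi(W)$ has a finite one, $\dim\pi(W)=\dim W=N-1$. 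Thus $\pi(W)$ is an irreducible hypersurface, hence (since $\mathbb{C}[u]$ is a UFD) equals $V(g)$ for an irreducible $g$, unique up to a nonzero scalar; clearing denominators and dividing out the content yields an integer polynomial, irreducible over $\mathbb{Z}$ by Gauss's lemma and over $\mathbb{C}$ because $W$, and therefore $\pi(W)$, is geometrically irreducible. This is $iii)$. The residual scalar and sign are pinned down by the normalisation $\operatorname{Res}(x_1^{d_1},\dots,x_m^{d_m})=1$, which is meaningful precisely because that point lies off $\pi(W)$.

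To make $ii)$ come out, I first record the multidegree: $\operatorname{Res}$ is homogeneous of degree $\prod_{j\neq i}d_j$ in the block of coefficients of the $i$-th argument. This follows by specialising the other arguments to a general tuple, whose common zero locus in $\mathbb{P}^{m-1}$ is a reduced set $\{p_1,\dots,p_e\}$ with $e=\prod_{j\neq i}d_j$ by Bézout; then $\operatorname{Res}$, viewed as a polynomial in the $i$-th block alone, vanishes exactly on the union of the $e$ distinct hyperplanes $\{F_i(p_k)=0\}$, and a generic-slice reducedness argument shows it is, up to a unit, the product of their $e$ distinct linear equations, so its degree in that block is $e$. Now for $f=a_1x_1^d+\cdots+a_mx_m^d$ one has $\frac{1}{d}\nabla f=(a_1x_1^{d-1},\dots,a_mx_m^{d-1})$, an $m$-tuple of forms of degree $d-1$, so every block has degree $(d-1)^{m-1}$; scaling the $i$-th argument by $a_i$ multiplies the resultant by $a_i^{(d-1)^{m-1}}$, whence
\[
  \operatorname{Res}\!\left(\tfrac{1}{d}\nabla f\right)=(a_1\cdots a_m)^{(d-1)^{m-1}}\,\operatorname{Res}\bigl(x_1^{d-1},\dots,x_m^{d-1}\bigr)=(a_1\cdots a_m)^{(d-1)^{m-1}}
\]
by the normalisation. (The trivial case $m=1$ is checked by hand.)

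The steps I expect to be fussy are exactly the genericity ones: verifying that the generic fibre of $\pi$ is finite, that the slices used in the multidegree computation are reduced sets of the expected cardinality, and carefully tracking base change from $\mathbb{Q}$ to $\mathbb{C}$ so that irreducibility over $\mathbb{C}$ — not merely over $\mathbb{Q}$ — holds, and the normalisation so that $ii)$ appears with the constant exactly $1$. By contrast, the closedness of $\pi(W)$ and the dimension count are routine.
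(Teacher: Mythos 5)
The paper offers no proof of this proposition --- it is stated as well known and delegated to \cite{cox2006using,gelfand2008discriminants} --- so I am judging your reconstruction against the standard treatment in those references, which is exactly the route you follow: the incidence variety $W$ fibred linearly over $\mathbb{P}^{m-1}$, properness of the projection, a dimension count to get a geometrically irreducible hypersurface, and the multidegree computed by specialising all but one block. That outline is sound, and the deduction of $ii)$ from the multidegree plus multilinearity in each argument is correct. Two of the points you flag as ``fussy'' are genuinely needed but routine: generic finiteness of $\pi|_W$ follows by exhibiting a single tuple in $\pi(W)$ with zero-dimensional common zero locus (e.g.\ $x_1^{d_1},\dots,x_{m-1}^{d_{m-1}},x_1^{d_m}$) and invoking semicontinuity of fibre dimension, and the reducedness of the generic slice in the multidegree argument is a Bertini-type statement in characteristic zero.

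The one place where your argument has a real hole rather than a fussy detail is the interaction between integrality and the normalisation. You want $\operatorname{Res}$ to be simultaneously a primitive irreducible polynomial over $\mathbb{Z}$ (needed for $iii)$ and for uniqueness) and to satisfy $\operatorname{Res}(x_1^{d_1},\dots,x_m^{d_m})=1$ (needed for the constant in $ii)$ to be exactly $1$). The observation that the monomial tuple lies off $\pi(W)$ only tells you that the primitive generator takes a \emph{nonzero integer} value there; it does not tell you that value is $\pm 1$, and without that you can only conclude $\operatorname{Res}\left(\frac{1}{d}\nabla f\right)=c\,(a_1\cdots a_m)^{(d-1)^{m-1}}$ for some nonzero integer $c$. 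Closing this requires an extra input: either carry out elimination over $\mathbb{Z}$ and reduce modulo each prime $p$ (the system $x_1^{d_1}=\cdots=x_m^{d_m}=0$ still has no projective solution over $\overline{\mathbb{F}_p}$, so no prime divides the value, forcing it to be $\pm1$), or appeal to the Macaulay determinantal formula, which exhibits the value at the monomial tuple as $\pm1$ directly. Either patch is short, but one of them must be present for $ii)$ to hold with constant exactly $1$, which is precisely the normalisation the paper relies on later (Lemma \ref{scaled Fermat polynomials} and the proof of the Main Theorem).
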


The normalization assumption of $ii)$ coincides with the classical definition made in \cite[Theorem \rom{3}, 2.3 and Theorem \rom{3}, 3.5]{cox2006using} and in \cite[p. 427]{gelfand2008discriminants}.

The degree of the resultant is known in general.

\begin{prop}\label{degree resultant m polynomials}
$\operatorname{Res}(f_1,\ldots,f_m)$ is a homogeneous polynomial of degree $d_1\cdots d_{i-1}d_{i+1}\cdots d_m$ with respect to the coefficients of $f_i$ for all $i=1,\ldots,m$. Hence the total degree of $\operatorname{Res}(f_1,\ldots,f_m)$ is
\[
\deg\operatorname{Res}(f_1,\ldots,f_m)=\sum_{i=1}^m d_1\cdots d_{i-1}d_{i+1}\cdots d_m.
\]
In particular, when all the forms $f_1,\ldots,f_m$ have the same degree $d$, the resultant has degree $d^{m-1}$ in the coefficients of each $f_i$, namely $\deg\operatorname{Res}(f_1,\ldots,f_m)=md^{m-1}$.
\end{prop}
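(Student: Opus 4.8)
The substance of the statement is the claim that $\operatorname{Res}(f_1,\ldots,f_m)$ is homogeneous of degree $\prod_{j\neq i}d_j$ in the coefficients of $f_i$; the total degree is then the sum over $i$, and the equal-degree case follows by substituting $d_i=d$. Since $\operatorname{Res}$ is invariant, up to sign, under permutations of its arguments, it suffices to treat $i=m$. The plan is to specialize: fix generic homogeneous forms $f_1,\ldots,f_{m-1}$ of degrees $d_1,\ldots,d_{m-1}$. By B\'ezout's theorem together with a genericity argument, $V(f_1,\ldots,f_{m-1})\subseteq\mathbb{P}^{m-1}$ is a reduced set of exactly $D\coloneqq d_1\cdots d_{m-1}$ points $p_1,\ldots,p_D$. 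One first records, using only Proposition~\ref{def resultant}, that $\operatorname{Res}$ is multi-homogeneous: scaling an argument preserves its zero set, hence multiplies $\operatorname{Res}$ by a scalar (two irreducible polynomials with the same zero locus being proportional), multiplicatively in each argument, so there is a well-defined $\delta_m\in\mathbb{Z}_{\geq0}$ which is the degree in the coefficients of $f_m$. Then $R(g)\coloneqq\operatorname{Res}(f_1,\ldots,f_{m-1},g)$ is homogeneous of degree $\delta_m$ in the coefficients of a degree-$d_m$ form $g$ and is not identically zero, since any $g$ not vanishing at any $p_k$ gives $R(g)\neq0$ by part~$i)$ of Proposition~\ref{def resultant}.

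By part~$i)$ again, the zero locus of $R$ in $\mathbb{P}(\operatorname{Sym}^{d_m}V)$ is exactly the union $\bigcup_{k=1}^{D}H_{p_k}$ of the pairwise distinct hyperplanes $H_{p_k}\coloneqq\{[g]:g(p_k)=0\}$, so $R$ equals a nonzero scalar times $\prod_{k=1}^{D}\ell_{p_k}^{a_k}$, where $\ell_{p_k}$ is the linear form ``evaluation at $p_k$'' and $a_k\geq1$; hence $\delta_m=\sum_k a_k\geq D$. To obtain equality I would restrict $R$ to a generic pencil $g=t_0g_0+t_1g_1$: this yields a nonzero binary form in $(t_0,t_1)$ of degree $\delta_m$ whose roots are precisely the members of the pencil passing through some $p_k$, and for generic $g_0,g_1$ there are $D$ such members, each a simple root because the pencil crosses each $H_{p_k}$ transversally. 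Therefore $\delta_m=D=d_1\cdots d_{m-1}$, whence the first assertion; summing over $i$ gives the total degree, and setting all $d_i=d$ gives degree $d^{m-1}$ in each block of coefficients and total degree $md^{m-1}$.

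The delicate point is exactly the transversality (equivalently, $a_k=1$) in the pencil step: without it the argument only yields $\delta_m\geq D$. A clean way to sidestep it is to invoke the Poisson product formula for resultants (see \cite[Chapter~3]{cox2006using} or \cite{gelfand2008discriminants}): whenever $f_1|_{x_m=0},\ldots,f_{m-1}|_{x_m=0}$ have no common zero in $\mathbb{P}^{m-2}$, one has $\operatorname{Res}(f_1,\ldots,f_m)=\operatorname{Res}(f_1|_{x_m=0},\ldots,f_{m-1}|_{x_m=0})^{d_m}\cdot\prod_{p}f_m(p)$, where $p$ runs over the $d_1\cdots d_{m-1}$ solutions, counted with multiplicity, of the dehomogenized system $f_1(\cdot,1)=\cdots=f_{m-1}(\cdot,1)=0$. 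Only the last product involves $f_m$, it is a product of $d_1\cdots d_{m-1}$ factors each linear in the coefficients of $f_m$, and the first factor does not involve $f_m$; this gives $\delta_m=d_1\cdots d_{m-1}$ directly, and the remaining assertions follow as above.
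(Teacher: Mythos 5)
The paper does not actually prove this proposition: it is recalled as a classical fact, with the references \cite{cox2006using,gelfand2008discriminants} standing in for the argument. Your proposal therefore supplies a proof where the paper supplies none, and it is essentially the standard one from Chapter 3 of \cite{cox2006using}. Your self-diagnosis of the weak point is accurate and worth reiterating: the specialization-plus-factorization step only yields $R=c\prod_{k}\ell_{p_k}^{a_k}$ with $a_k\geq 1$, hence $\delta_m\geq d_1\cdots d_{m-1}$, and the pencil argument does not close the gap --- a generic line does meet each reduced hyperplane $H_{p_k}$ transversally, but that says nothing about the multiplicity $a_k$ with which $\ell_{p_k}$ divides $R$; the restriction of $R$ to the pencil is a binary form of degree $\sum_k a_k$ with exactly $D$ distinct roots, which again only gives the inequality. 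So ``transversality of the pencil'' and ``$a_k=1$'' are not equivalent, and the latter is the real content. Your repair via the Poisson product formula is correct and complete: on the nonempty open set where $\operatorname{Res}(f_1|_{x_m=0},\ldots,f_{m-1}|_{x_m=0})\neq 0$, the factor $\prod_p f_m(p)^{m(p)}$ (the determinant of multiplication by the dehomogenized $f_m$ on the Artinian quotient) is a product of $\sum_p m(p)=d_1\cdots d_{m-1}$ nonzero linear forms in the coefficients of $f_m$, while the other factor is independent of $f_m$; since a homogeneous polynomial of degree $\delta_m$ in a block of variables either vanishes identically or retains its degree under specialization of the remaining variables, $\delta_m=d_1\cdots d_{m-1}$ follows. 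Two small remarks: the preliminary multi-homogeneity argument (irreducibility plus invariance of the zero locus under scaling) is fine but should note that one compares $\operatorname{Res}$ with its pullback under the scaling and uses that $(\operatorname{Res})$ is a prime, hence radical, ideal; and the reduction to $i=m$ needs nothing as strong as permutation symmetry of $\operatorname{Res}$ --- the same argument applies verbatim to any index. With the Poisson formula in place the proof is correct, and arguably more informative than a bare citation, since it also identifies the multiplicities $a_k$ with the local intersection multiplicities $m(p_k)$.
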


%Among the properties of the resultant, we will apply the fact that, when all the forms $f_1,\ldots,f_m$ have the same degree $d$, it behaves well when $f_1,\ldots,f_m$ are replaced by linear combinations of them.

%\begin{prop}\label{prop: resultant under linear trasformations}
%If $f_1,\ldots,f_m$ have the same degree $d$ and $g_i=\sum_{j=1}^ma_{ij}f_j$ for all $i=1,\ldots,m$, where $(a_{ij})$ is an invertible matrix with entries in $\mathbb{C}$, then
%\[
%\operatorname{Res}(g_1,\ldots,g_m)=\det(a_{ij})^{d^{m-1}}\operatorname{Res}(f_1,\ldots,f_m).
%\]
%\end{prop}

%\begin{ex}\label{ex: resultant fermat}
%Proposition \ref{prop: resultant under linear trasformations} is useful for the following example. Consider the scaled Fermat polynomial $f(x_1$,$\ldots$,$x_m)$ $=$ $a_1x_1^{d}+\cdots+a_mx_m^{d}$, $a_1,\ldots,a_m\in\mathbb{C}$. Then
%\[
%\operatorname{Res}\left(\frac{1}{d}\nabla f\right)=\operatorname{Res}\left(a_1x_1^{d-1},\ldots,a_{n+1}x_{n+1}^{d-1}\right)=(a_1\cdots a_m)^{(d-1)^{m-1}}.
%\]
%\end{ex}

The notion of resultant is closely related to the classical notion of \emph{discriminant} of a homogeneous polynomial of degree $d$ in $m$ variables, as pointed out in \cite{gelfand2008discriminants}. The problem of computing the discriminant of a homogeneous polynomial is a particular case of a more general geometric problem, that is, finding the equations of the dual of a variety (see \cite{gelfand2008discriminants,holme1988geometric,tevelev2007projectively}).

\begin{dfn}\label{dual variety}
Let $X\subseteq\mathbb{P}^n$ be an irreducible projective variety and denote by $X_{sm}$ its smooth locus. The \emph{dual variety} of $X$ is
\[
X^{\vee}\coloneqq\overline{\left\{H\in(\mathbb{P}^n)^{\vee}\ |\ T_PX\subseteq H\mbox{ for some }P\in X_{sm}\right\}},
\]
where the closure is taken with respect to the Zariski topology.
\end{dfn}

\begin{dfn}\label{conormal variety}
Let $X\subseteq\mathbb{P}^n$ be an irreducible projective variety. The \emph{conormal variety} of $X$ is
\[
Z(X)\coloneqq\overline{\{(P,H)\in\mathbb{P}^n\times(\mathbb{P}^n)^{\vee}\ |\ P\in X_{sm}\mbox{ and }T_PX\subseteq H\}}.
\]
\end{dfn}
Consider the projections $\pi_1\colon Z(X)\to X_{sm}$ and $\pi_2\colon Z(X)\to(\mathbb{P}^n)^{\vee}$ of $Z(X)$. In particular $\pi_1$, $\pi_2$ are the restrictions of the canonical projections $pr_1\colon\mathbb{P}^n\times(\mathbb{P}^n)^{\vee}\to\mathbb{P}^n$, $pr_2\colon\mathbb{P}^n\times(\mathbb{P}^n)^{\vee}\to(\mathbb{P}^n)^{\vee}$. Note that, by Definition \ref{dual variety}, $X^{\vee}$ coincides with the image of $\pi_2$ and is an irreducible variety. Moreover, since $\dim(Z(X))=n-1$, it follows that $\dim(X^{\vee})\leq n-1$ and we expect that in ``typical'' cases $X^{\vee}$ is a hypersurface.

\begin{dfn}
Let $X\subseteq\mathbb{P}^n$ be a projective variety. If $X^{\vee}$ is a hypersurface, then it is defined by the vanishing of a homogeneous polynomial, denoted by $\Delta_X$ and called the $X$-\emph{discriminant}. We assume the $X$-discriminant to have relatively prime integer coefficients: in this way, $\Delta_X$ is defined up to sign. If $X^{\vee}$ is not a hypersurface, then we set $\Delta_X\coloneqq 1$.
\end{dfn}

If $X\subseteq\mathbb{P}^n$ is an irreducible variety such that $X^{\vee}$ is a hypersurface, then $\Delta_X$ is an irreducible homogeneous polynomial over the complex numbers. When $X$ is the Veronese variety $V_{n,d}$, then it is known that, for all $d>1$, $X^{\vee}$ is a hypersurface and its equation, the $V_{n,d}$-discriminant, coincides up to a constant factor with the discriminant $\Delta_d(h)$ of a homogeneous polynomial $h$ of degree $d$ in $n+1$ variables.

\section{The E-characteristic polynomial of a symmetric tensor}\label{section: characteristic polynomial}

In this section we recall the main properties of E-eigenvectors and isotropic eigenvectors of a symmetric tensor $f$. After this, we treat more in detail the properties of the E-characteristic polynomial of $f$. 

Consider again the Definition \ref{def eigenvalue eigenvector}. The first consequence is the following property.
\begin{prop}\label{prop: identity lambda}
Let $f\in\operatorname{Sym}^dV$. If $(\lambda,x)$ is an E-eigenpair of $f$, then $\lambda=f(x)$.
\end{prop}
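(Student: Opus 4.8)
The statement to prove is Proposition~\ref{prop: identity lambda}: if $(\lambda,x)$ is an E-eigenpair of $f\in\operatorname{Sym}^dV$, then $\lambda=f(x)$.

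The plan is to exploit Euler's identity for homogeneous polynomials together with the normalization $\|x\|=1$ built into the definition of an E-eigenvector. Since $f$ is homogeneous of degree $d$, Euler's formula gives $\langle\nabla f(x),x\rangle=d\,f(x)$, equivalently $\frac{1}{d}\langle\nabla f(x),x\rangle=f(x)$. This is the only nontrivial ingredient, and it holds equally over $V$ and over its complexification $V^{\mathbb{C}}$ since it is a polynomial identity. Then I would take the defining equation~(\ref{new interpretation eigenvectors}), namely $\frac{1}{d}\nabla f(x)=\lambda x$, and pair both sides with $x$ using the (complex-bilinear) form $\langle\cdot\,,\cdot\rangle$: the left-hand side becomes $\frac{1}{d}\langle\nabla f(x),x\rangle=f(x)$ by Euler, while the right-hand side becomes $\lambda\langle x,x\rangle=\lambda\|x\|^2=\lambda$, using that $x$ is an E-eigenvector and hence $\|x\|=1$. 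Comparing the two expressions yields $\lambda=f(x)$, as claimed.

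There is essentially no obstacle here; the only point requiring a word of care is that we are pairing with the symmetric bilinear form $\langle\cdot\,,\cdot\rangle_{\mathbb{C}}$ (not a Hermitian inner product), so no conjugation appears and $\langle x,x\rangle_{\mathbb{C}}=\|x\|^2$ by the conventions fixed just after Definition~\ref{def eigenvalue eigenvector}; this is exactly why the normalization $\|x\|=1$ is an honest algebraic condition and the computation goes through verbatim for $x\in V^{\mathbb{C}}$. I would state Euler's identity explicitly, perform the one-line pairing, and conclude.
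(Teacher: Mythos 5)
Your proof is correct and follows exactly the paper's own argument: pair both sides of equation~(\ref{new interpretation eigenvectors}) with $x$ via the bilinear form, apply Euler's identity on the left, and use $\|x\|=1$ on the right. The remark about the form being complex-bilinear rather than Hermitian is a sensible clarification but does not change the substance.
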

\begin{proof}
Apply the operator $\langle\cdot\hspace{0.6mm},x\rangle$ on both sides of equation (\ref{new interpretation eigenvectors}). Then we have
\[
\left\langle\frac{1}{d}\nabla f(x),x\right\rangle=\langle \lambda x,x\rangle.
\]
Using Euler's identity, the left hand side of last identity is equal to $f(x)$, whereas by linearity and the fact that $x$ has norm 1 the right-hand side is equal to $\lambda$.
\end{proof}

A remarkable fact observed in \cite{lim2005singular,qi2005eigenvalues} is that the E-eigenvectors of $f\in\operatorname{Sym}^dV$ correspond to the critical points of the function $f(x)$ restricted on the unit sphere $S^n\coloneqq\{x\in\mathbb{R}^{n+1}\ |\ \|x\|=1\}$. Hence the E-eigenvectors of $f$ are the normalized solutions $x$, in orthonormal coordinates, of:
\[
\mbox{rank}
\begin{pmatrix}
\nabla f(x)\\
x
\end{pmatrix}
\leq 1.
\]

Moreover, we recall an alternative interpretation of the eigenvectors of a symmetric form, meaning that an \emph{eigenvector} of $f$ is any solution of equation (\ref{new interpretation eigenvectors}), whether it has unit norm or not.

\begin{thm}[Lim, variational principle]
Given $f\in\operatorname{Sym}^dV$, the critical rank one symmetric tensors for $f$ are exactly of the form $x^d$, where $x$ is an eigenvector of $f$.
\end{thm}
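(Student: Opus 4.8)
The plan is to pull the squared distance function back along the Veronese parametrization and read off the stationarity equation. Set $\phi\colon V\to\operatorname{Sym}^dV$, $\phi(x)=x^d$, and let $\widehat{V_{n,d}}$ denote the affine cone over $V_{n,d}$. First I would check that $\phi$ is an immersion at every $x\neq 0$: if $x^{d-1}v=0$ then, regarding both sides as polynomials, $\langle x,\cdot\rangle^{d-1}\langle v,\cdot\rangle$ vanishes identically, forcing $v=0$. Since moreover the only singular point of $\widehat{V_{n,d}}$ is the apex $\phi(0)=0$, the map $\phi$ restricts to a local diffeomorphism near each $x\neq 0$ onto the smooth locus of $\widehat{V_{n,d}}$ (over $V^{\mathbb{C}}$ it is even a surjection onto $\widehat{V_{n,d}}$). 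Hence a point $x^d$ with $x\neq 0$ is a critical rank one symmetric tensor for $f$ if and only if $x$ is a critical point of
\[
g(x)\coloneqq\langle f-x^d,\,f-x^d\rangle=\langle f,f\rangle-2\langle f,x^d\rangle+\langle x^d,x^d\rangle .
\]

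Next I would simplify $g$ using two consequences of the normalization $\langle p^d,q^d\rangle=\langle p,q\rangle^d$. First, $\langle f,x^d\rangle=f(x)$ for every $f\in\operatorname{Sym}^dV$: this holds when $f=p^d$, since the polynomial $p^d$ evaluated at $x$ equals $\langle p,x\rangle^d=\langle p^d,x^d\rangle$, and it extends by linearity. Second, $\langle x^d,x^d\rangle=\langle x,x\rangle^d=\|x\|^{2d}$. Thus $g(x)=\|f\|^2-2f(x)+\|x\|^{2d}$, and since $\nabla(\|x\|^{2d})=2d\,\|x\|^{2d-2}x$, the condition $\nabla g(x)=0$ becomes
\[
\tfrac1d\nabla f(x)=\|x\|^{2d-2}x ,
\]
which is exactly equation \eqref{new interpretation eigenvectors} with $\lambda=\|x\|^{2d-2}$; so every critical rank one tensor for $f$ is $x^d$ for some eigenvector $x$. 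For the reverse inclusion, given an eigenvector $x$ with $\tfrac1d\nabla f(x)=\lambda x$, homogeneity of $\nabla f$ yields $\tfrac1d\nabla f(\mu x)=\mu^{d-1}\lambda\,x$ while $\|\mu x\|^{2d-2}(\mu x)=\mu^{2d-1}\|x\|^{2d-2}x$, so $\mu x$ solves the displayed equation once $\mu^{d}=\lambda/\|x\|^{2d-2}$, and then $(\mu x)^d$ is the desired critical rank one tensor.

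The hard part will be the scalar bookkeeping in this last step. An eigenvector contributes a critical rank one tensor only through the unique rescaling that forces $\lambda=\|x\|^{2d-2}$; the scalar $\mu$ above exists in general only over the complexification $V^{\mathbb{C}}$ (over $\mathbb{R}$ with $d$ even one has instead to keep track of which of the two halves of the real cone $\widehat{V_{n,d}}$ one lands in); and the genuinely degenerate cases $\lambda=0$, i.e. $\nabla f(x)=0$, and $\|x\|=0$, i.e. isotropic eigenvectors, sit outside the correspondence — consistently, since the associated rank one tensors then lie on the singular locus, respectively the isotropic locus, of $\widehat{V_{n,d}}$. Everything else is routine: one could equivalently argue with Lagrange multipliers, using $T_{x^d}\widehat{V_{n,d}}=\{\,x^{d-1}v\mid v\in V\,\}$ together with the polarized identities $\langle f,x^{d-1}v\rangle=\tfrac1d\langle\nabla f(x),v\rangle$ and $\langle x^d,x^{d-1}v\rangle=\|x\|^{2d-2}\langle x,v\rangle$, both obtained by differentiating $t\mapsto\langle\,\cdot\,,(x+tv)^d\rangle$ at $t=0$.
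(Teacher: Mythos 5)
Your proof is correct. Note that the paper itself offers no proof of this theorem: it is stated as a recalled result and attributed to Lim's variational principle (with the references to Lim and Qi given just above), so there is nothing to compare against line by line. Your argument is the standard derivation and is essentially complete: you correctly check that $x\mapsto x^d$ is an immersion away from the origin (so that critical points of the pulled-back squared distance on $V\setminus\{0\}$ correspond to critical points of the distance function on the smooth locus of the cone), you use the Bombieri-type identity $\langle f,x^d\rangle=f(x)$, which is indeed the normalization forced by the paper's convention $\langle p^d,q^d\rangle=\langle p,q\rangle^d$ together with the identification of $p^d$ with the polynomial $\langle p,\cdot\rangle^d$, and you arrive at $\tfrac1d\nabla f(x)=\|x\|^{2d-2}x$. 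Your closing remarks are also the right ones: read literally, ``exactly of the form $x^d$ with $x$ an eigenvector'' is only true after the rescaling $\mu^d=\lambda/\|x\|^{2d-2}$ (the correspondence is between eigenvector \emph{lines} and critical rank one tensors, not between individual eigenvectors and critical points), this $\mu$ need only exist over $V^{\mathbb{C}}$ (for $d$ even over $\mathbb{R}$ one may land on the negative half of the cone, which is not a real $d$-th power), and the cases $\nabla f(x)=0$ and $\|x\|=0$ fall outside the correspondence — which is consistent with the paper's separate treatment of isotropic eigenvectors and irregular tensors in Sections 3 and 4. The Lagrange-multiplier variant you sketch at the end, via $\langle f,x^{d-1}v\rangle=\tfrac1d\langle\nabla f(x),v\rangle$ and $\langle x^d,x^{d-1}v\rangle=\|x\|^{2d-2}\langle x,v\rangle$, is an equally valid packaging of the same computation.
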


This interpretation is used by Draisma, Ottaviani and Tocino in \cite{draisma2017best}, where they deal more in general with the \emph{best rank $k$ approximation problem} for tensors.

Looking at Definition \ref{def eigenvalue eigenvector}, a natural question is whether E-eigenvalues could change under an orthonormal linear change of coordinates in $V$ (see \cite[Theorem 1]{qi2007eigenvalues} and \cite[Theorem 2.20]{qi2017tensor}).

\begin{thm}\label{orthogonal invariance E-eigenvalues}
Given $f\in\operatorname{Sym}^dV$, the set of the E-eigenvalues of $f$ is a $\operatorname{SO}(V)$-invariant of $f$.
\end{thm}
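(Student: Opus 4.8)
The plan is to prove the slightly stronger statement that, for every $A\in\operatorname{SO}(V)$ (in fact for every $A\in\operatorname{O}(V)$), the map $x\mapsto Ax$ restricts to a bijection from the set of E-eigenpairs of $f$ onto the set of E-eigenpairs of the transformed tensor $g\coloneqq A\cdot f$, and that this bijection leaves the eigenvalue $\lambda$ unchanged. The $\operatorname{SO}(V)$-invariance of the set of E-eigenvalues then follows at once by reading off the first coordinates.

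First I would fix orthonormal coordinates, identify $\operatorname{Sym}^dV^{\mathbb{C}}$ with $\mathbb{C}[x_1,\dots,x_{n+1}]_d$, and recall that the induced action on coefficients is $g(x)=f(A^{-1}x)$. The one computation that matters is the transformation of the gradient under this substitution: setting $y=A^{-1}x$ and applying the chain rule gives $\nabla g(x)=(A^{-1})^{\mathsf T}\nabla f(A^{-1}x)$, and since $A$ is orthogonal we have $(A^{-1})^{\mathsf T}=A$, whence $\nabla g(x)=A\,\nabla f(A^{-1}x)$. I would also record that $A$ preserves the (complex-bilinear) squared norm, $\langle Ax,Ax\rangle=\langle x,x\rangle$, which remains valid over $V^{\mathbb{C}}$ because $A$ preserves the complexified form $\langle\cdot\hspace{0.6mm},\cdot\rangle_{\mathbb{C}}$.

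Granting these two facts, the verification is a direct substitution. If $(\lambda,x)$ is an E-eigenpair of $f$, put $z\coloneqq Ax$; then $\|z\|=\|x\|=1$ and
\[
\tfrac1d\nabla g(z)=\tfrac1d A\,\nabla f(A^{-1}z)=\tfrac1d A\,\nabla f(x)=A(\lambda x)=\lambda z,
\]
so $(\lambda,z)$ is an E-eigenpair of $g=A\cdot f$. Running the same argument with $A^{-1}\in\operatorname{SO}(V)$ in place of $A$ gives the converse implication, so $x\mapsto Ax$ is an eigenvalue-preserving bijection between the E-eigenpairs of $f$ and those of $A\cdot f$; hence $f$ and $A\cdot f$ have the same set of E-eigenvalues. (As a sanity check this is consistent with Proposition~\ref{prop: identity lambda}: $\lambda=f(x)=f(A^{-1}z)=g(z)$.)

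There is no real obstacle here — the proof is entirely formal once one has the chain-rule identity $\nabla g(x)=A\,\nabla f(A^{-1}x)$. The only two points that deserve a word of care are the orthogonality identity $(A^{-1})^{\mathsf T}=A$, and the fact that the argument is carried out over the complexification $V^{\mathbb{C}}$, where the relevant pairing is the bilinear extension $\langle\cdot\hspace{0.6mm},\cdot\rangle_{\mathbb{C}}$ (not a Hermitian inner product), which $A$ continues to preserve.
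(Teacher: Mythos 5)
Your proof is correct: the chain-rule identity $\nabla g(x)=(A^{-1})^{\mathsf T}\nabla f(A^{-1}x)=A\,\nabla f(A^{-1}x)$ for orthogonal $A$, together with the preservation of the complexified bilinear form, gives exactly the eigenvalue-preserving bijection of E-eigenpairs that is needed. The paper does not prove this theorem itself but cites \cite[Theorem 1]{qi2007eigenvalues} and \cite[Theorem 2.23]{qi2017tensor}, and your argument is essentially the same direct covariance computation carried out there, with the appropriate care about working over $V^{\mathbb{C}}$ with the bilinear (not Hermitian) extension.
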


In particular Theorem \ref{orthogonal invariance E-eigenvalues} states that the symmetric functions of the E-eigenvalues of $f$ are orthogonal invariants of $f$, giving rise to the following corollary (see \cite[Theorem 3.3]{li2012characteristic}):
\begin{cor}
Given $f\in\operatorname{Sym}^dV$, all the coefficients of the E-characteristic polynomial $\psi_f$ are $\operatorname{SO}(V)$-invariants of $f$. 
\end{cor}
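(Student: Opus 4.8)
The strategy is to prove the a priori stronger fact that $\psi_f$ is an $\operatorname{SO}(V)$-invariant as a whole, i.e. that $\psi_g(\lambda)=\psi_f(\lambda)$ identically in $\lambda$ whenever $g$ is obtained from $f$ by an orthonormal change of coordinates, say $g(x)=f(Ax)$ with $A\in\operatorname{SO}(V)$. Since $\psi_f(\lambda)$ is, by Definition \ref{characteristic polynomial}, a resultant of homogeneous forms whose coefficients are polynomials in the coefficients of $f$ and in $\lambda$, and the resultant is a polynomial over $\mathbb{Z}$ in the coefficients of its arguments, the coefficient of each power of $\lambda$ in $\psi_f$ is a polynomial over $\mathbb{Z}$ in the coefficients of $f$; hence the identity $\psi_g=\psi_f$ is precisely the assertion that all these coefficients are $\operatorname{SO}(V)$-invariant. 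This makes rigorous --- and extends to the leading coefficient and to non-regular tensors --- the heuristic recorded above, namely that the coefficients of $\psi_f$ are, up to the leading one, the elementary symmetric functions of its roots, which for general $f$ are the E-eigenvalues, orthogonally invariant by Theorem \ref{orthogonal invariance E-eigenvalues}.

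First I would record how the systems of Definition \ref{characteristic polynomial} transform. From $g(x)=f(Ax)$ and the chain rule, $\nabla g(x)=A^{T}(\nabla f)(Ax)$, while $\|x\|=\|Ax\|$ because $A\in\operatorname{O}(V)$ and $A^{T}=A^{-1}$. For $d$ even this gives
\[
F^{g}_\lambda(x)=\tfrac1d A^{T}(\nabla f)(Ax)-\lambda\|Ax\|^{d-2}A^{T}(Ax)=A^{T}\,F^{f}_\lambda(Ax),
\]
so the vector $F^{g}_\lambda$ is obtained from $F^{f}_\lambda$ by the substitution $x\mapsto Ax$ followed by left multiplication of the resulting vector of forms of degree $d-1$ by the constant matrix $A^{T}$. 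For $d$ odd, letting $\widetilde A\coloneqq\operatorname{diag}(1,A)$ act on $(x_0,x)$, the first component $x_0^2-\|x\|^2$ of $G_\lambda$ is unchanged under $(x_0,x)\mapsto\widetilde A(x_0,x)$, and the remaining $n+1$ components, all of degree $d-1$, transform exactly as above; thus $G^{g}_\lambda$ is obtained from $G^{f}_\lambda$ by the substitution $(x_0,x)\mapsto\widetilde A(x_0,x)$ followed by left multiplication of the degree-$(d-1)$ block by $A^{T}$, i.e. by $\operatorname{diag}(1,A^{T})$.

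Finally I would invoke the two standard transformation laws of the resultant (see \cite{cox2006using,gelfand2008discriminants}): under an invertible linear substitution $x\mapsto Bx$ of the variables, $\operatorname{Res}(f_1,\dots,f_m)$ is multiplied by $(\det B)^{d_1\cdots d_m}$; and if $M$ is an invertible constant matrix recombining among themselves only forms sharing a common degree, then replacing $(f_1,\dots,f_m)$ by $M\cdot(f_1,\dots,f_m)^{T}$ multiplies $\operatorname{Res}$ by a power of $\det M$. Applying the first law with $B=A$ (resp. $B=\widetilde A$) and the second with $M=A^{T}$ (resp. $M=\operatorname{diag}(1,A^{T})$), and using $\det A=1$, every correction factor equals $1$; hence $\operatorname{Res}(F^{g}_\lambda)=\operatorname{Res}(F^{f}_\lambda)$ for $d$ even and $\operatorname{Res}(G^{g}_\lambda)=\operatorname{Res}(G^{f}_\lambda)$ for $d$ odd, that is $\psi_g=\psi_f$. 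I expect the only point needing care to be the second law in the odd case, where $G_\lambda$ carries one form of degree $2$ besides the $n+1$ forms of degree $d-1$: one has to observe that $\operatorname{diag}(1,A^{T})$, being block-diagonal with the $1\times1$ identity on the degree-$2$ slot, indeed recombines only the $n+1$ equal-degree forms, so that the law genuinely applies and contributes only a power of $\det A^{T}=1$. The remaining verifications are routine.
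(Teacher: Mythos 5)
Your proof is correct, and it takes a genuinely different route from the paper. The paper deduces the corollary in one line from Theorem \ref{orthogonal invariance E-eigenvalues}: the set of E-eigenvalues is an $\operatorname{SO}(V)$-invariant, hence so are their elementary symmetric functions, hence so are the coefficients of $\psi_f$ (citing Li--Qi--Zhang). That derivation is quick but, taken literally, only controls the coefficients \emph{up to the leading one} and only for tensors whose E-eigenvalues exhaust the roots of $\psi_f$ with the right multiplicities (i.e.\ regular $f$, via Theorem \ref{eigenvalues as roots of charpoly}); one then has to invoke Zariski density to pass from general $f$ to all $f$, and a separate argument for the leading coefficient. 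Your proof instead establishes the stronger identity $\psi_{f\circ A}=\psi_f$ directly from the two standard covariance laws of the resultant (substitution of variables contributes $(\det B)^{d_1\cdots d_m}$; constant-coefficient recombination of equal-degree forms contributes a power of $\det M$), with all correction factors killed by $\det A=1$. This is self-contained, covers every coefficient including $c_N$ and the irregular case uniformly, and the one delicate point --- that in the odd case the recombining matrix $\operatorname{diag}(1,A^{T})$ is block-diagonal and so mixes only the $n+1$ forms of common degree $d-1$, leaving the degree-$2$ form $x_0^2-\|x\|^2$ fixed (which it is, since $\|Ax\|=\|x\|$) --- is exactly the point you flag and handle. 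The computation $F^{g}_\lambda(x)=A^{T}F^{f}_\lambda(Ax)$ checks out, using $A^{T}A=I$ on the term $\lambda\|Ax\|^{d-2}A^{T}(Ax)=\lambda\|x\|^{d-2}x$. What your approach buys is rigor and uniformity; what the paper's buys is brevity and a conceptual link to the spectral data. Both are valid.
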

Given $f\in\operatorname{Sym}^dV$, we observe that for $d$ even there exists a non-zero constant $c\in\mathbb{Z}$ such that
\begin{equation}\label{characteristic polynomial discr even case}
\psi_f(\lambda)\coloneqq\operatorname{Res}(F_{\lambda}(x))=c\cdot\Delta_d\left(f(x)-\lambda\|x\|^d\right),
\end{equation}
where the $(n+1)$-dimensional vector $F_{\lambda}(x)$ has been introduced in (\ref{def charpoly even}) (see again \cite[Proposition \rom{13}, 1.7]{gelfand2008discriminants}). On the other hand, a relation equivalent to (\ref{characteristic polynomial discr even case}) is no longer possible for $d$ odd: in (\ref{def charpoly odd}), an additional variable $x_0$ is required to make the polynomial $\psi_f$ well defined.

In the study of the E-characteristic polynomial $\psi_f$, a crucial role is played by a family of particular symmetric tensors, the ones admitting at least a singular point on the isotropic quadric $Q$.
\begin{dfn}\label{irregular tensor}
A symmetric tensor $f\in\operatorname{Sym}^dV$ is \emph{irregular} if there exists a non-zero vector $x\in V^{\mathbb{C}}$ such that $\|x\|=0$ and $\nabla f(x)=0$. Otherwise $f$ is called \emph{regular}.
\end{dfn}

The first fact on irregular symmetric tensors is that, when $d>2$, their E-characteristic polynomial is identically zero.

\begin{prop}\label{zero charpoly}
Given $f\in\operatorname{Sym}^dV$ with $d>2$, if $f$ is irregular then $\psi_f$ is the zero polynomial.
\end{prop}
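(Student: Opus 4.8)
The plan is to invoke property $i)$ of the resultant in Proposition~\ref{def resultant}: the resultant of a system of homogeneous polynomials of positive degree vanishes precisely when the system has a common zero in projective space. Thus it suffices to exhibit, for every value of the parameter $\lambda$, a common projective zero of the system whose resultant computes $\psi_f(\lambda)$ — and, crucially, a zero that can be chosen to be the \emph{same} point for all $\lambda$. The irregularity hypothesis hands us exactly such a point: a non-zero $x\in V^{\mathbb{C}}$ with $\|x\|=0$ and $\nabla f(x)=0$.

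First suppose $d$ is even. Then $F_{\lambda}$ from (\ref{def charpoly even}) is a system of $n+1$ forms of degree $d-1$ in the $n+1$ variables $x_1,\dots,x_{n+1}$, so $\operatorname{Res}(F_\lambda)$ is defined. Evaluating at $x$ gives $F_\lambda(x)=\frac1d\nabla f(x)-\lambda\|x\|^{d-2}x=0$, where the second summand vanishes because $d-2>0$ forces $\|x\|^{d-2}=0$. Since $x\neq 0$, the class $[x]\in\mathbb{P}^n$ is a projective solution of $F_\lambda=0$ for every $\lambda\in\mathbb{C}$, so Proposition~\ref{def resultant}$i)$ yields $\psi_f(\lambda)=\operatorname{Res}(F_\lambda)=0$ for all $\lambda$, i.e. $\psi_f\equiv 0$.

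When $d$ is odd, $\psi_f(\lambda)=\operatorname{Res}(G_\lambda)$ for $G_\lambda$ as in (\ref{def charpoly odd}), a system of $n+2$ forms (one of degree $2$, the remaining $n+1$ of degree $d-1$) in the $n+2$ variables $x_0,x$. Here one tests the point $(x_0,x)=(0,x)$, which is a well-defined point of $\mathbb{P}^{n+1}$ because $x\neq 0$. Its first coordinate equation becomes $x_0^2-\|x\|^2=0-0=0$, and the remaining block becomes $\frac1d\nabla f(x)-\lambda x_0^{d-2}x=0$, again since $d-2>0$ gives $x_0^{d-2}=0$. Hence $(0:x)$ solves $G_\lambda=0$ for every $\lambda$, and Proposition~\ref{def resultant}$i)$ once more forces $\psi_f\equiv 0$.

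There is no real technical obstacle; the only points requiring attention are that the argument genuinely uses the hypothesis $d>2$ (through the exponent $d-2\geq 1$, without which the terms $\lambda\|x\|^{d-2}x$ and $\lambda x_0^{d-2}x$ would not vanish at the isotropic point — indeed for $d=2$ the statement fails), that the vanishing criterion for the resultant is legitimately applied over the algebraically closed field $\mathbb{C}$ since the irregular witness $x$ lives in $V^{\mathbb{C}}$, and that in the odd case one must work in the auxiliary space $\mathbb{P}^{n+1}$ and use the point with vanishing $x_0$ rather than $[x]$ itself.
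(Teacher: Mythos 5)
Your proof is correct and follows exactly the paper's argument: exhibit the isotropic singular point $x$ (resp.\ $(0,x)$ in the odd case) as a common projective zero of $F_\lambda$ (resp.\ $G_\lambda$) for every $\lambda$, and conclude via the vanishing criterion for the resultant. The paper's own proof is terser but identical in substance, and your explicit remarks on why $d>2$ is needed and why one must pass to $\mathbb{P}^{n+1}$ in the odd case are accurate.
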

\begin{proof}
Suppose that $f$ is irregular. Then, by Definition \ref{irregular tensor} there exists a non-zero vector $x\in V$ such that $\|x\|=0$ and $\nabla f(x)=0$. Looking at Definition \ref{characteristic polynomial}, this implies that, for $d>2$ even, $x$ is a solution of the system $F_{\lambda}(x)=0$ for all $\lambda\in\mathbb{C}$, whereas for $d>2$ odd $(0,x)$ is a solution of the system $G_{\lambda}(x_0,x)=0$ for all $\lambda\in\mathbb{C}$. By resultant theory, this means that $\psi_f(\lambda)=0$ for all $\lambda\in\mathbb{C}$, namely $\psi_f$ is identically zero.
\end{proof}

\begin{rmk}
The statement of Proposition \ref{zero charpoly} is no longer true for $d=2$. In fact, for $d=2$ and any $n\geq 1$ there exist irregular symmetric tensors $f\in\operatorname{Sym}^d\mathbb{C}^{n+1}$ such that $\psi_f$ is not identically zero. For example, the polynomial $f(x)=(x_1+\sqrt{-1}x_2)^2+x_3^2+\cdots+x_{n+1}^2$ is irregular because the vector $(1,\sqrt{-1},0,\ldots,0)$ is a solution of $\nabla f(x)=0$, whereas one can easily check that $\psi_f(\lambda)=\lambda^2(1-\lambda)^{n-1}$, hence it is not identically zero.
\end{rmk}

The notion of regularity of a symmetric tensor plays a crucial role in the following result.

\begin{thm}\label{eigenvalues as roots of charpoly}
Suppose that $d\geq 3$. Given $f\in\operatorname{Sym}^dV$, every E-eigenvalue of $f$ is a root of the E-characteristic polynomial $\psi_f$. If $f$ is regular, then every root of $\psi_f$ is an E-eigenvalue of $f$.
\end{thm}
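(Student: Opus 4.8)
The plan is to prove the two implications separately, reducing each to part $i)$ of Proposition \ref{def resultant}: the resultant of a system of forms vanishes exactly when the system has a solution in the relevant projective space. I will use repeatedly that $\nabla f$ is homogeneous of degree $d-1$, and that $d\geq 3$ gives $d-2\geq 1$.

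For the first implication, let $(\lambda,x)$ be an E-eigenpair, so $\frac{1}{d}\nabla f(x)=\lambda x$ and $\|x\|=1$. If $d$ is even, substituting $x$ into the map $F_\lambda$ of (\ref{def charpoly even}) gives $F_\lambda(x)=\lambda x-\lambda\|x\|^{d-2}x=0$ with $x\neq 0$, so $F_\lambda=0$ has a solution in $\mathbb{P}^n$ and hence $\psi_f(\lambda)=\operatorname{Res}(F_\lambda)=0$. If $d$ is odd, I test the point $(x_0,x)=(1,x)$ against $G_\lambda$ of (\ref{def charpoly odd}): the first coordinate is $1-\|x\|^2=0$ and the remaining block is $\frac{1}{d}\nabla f(x)-\lambda\cdot 1^{d-2}x=0$, so again $\psi_f(\lambda)=\operatorname{Res}(G_\lambda)=0$. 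This direction needs no regularity hypothesis; it is also consistent with Proposition \ref{zero charpoly}, where $\psi_f$ vanishes identically for irregular $f$.

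For the converse, assume $f$ is regular and $\psi_f(\lambda)=0$. If $d$ is even, Proposition \ref{def resultant} $i)$ yields a nonzero $x\in V^{\mathbb{C}}$ with $\frac{1}{d}\nabla f(x)=\lambda\|x\|^{d-2}x$. If $\|x\|=0$, then since $d-2\geq 1$ the right-hand side vanishes, producing $\nabla f(x)=0$ with $\|x\|=0$ and $x\neq 0$, contrary to the regularity of $f$; hence $\langle x,x\rangle\neq 0$ and we may normalize $x$ so that $\|x\|=1$. Homogeneity of $\nabla f$ then turns $\frac{1}{d}\nabla f(x)=\lambda\|x\|^{d-2}x$ into $\frac{1}{d}\nabla f(x)=\lambda x$, so $\lambda$ is an E-eigenvalue. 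If $d$ is odd, Proposition \ref{def resultant} $i)$ yields a nonzero $(x_0,x)$ with $x_0^2=\|x\|^2$ and $\frac{1}{d}\nabla f(x)=\lambda x_0^{d-2}x$. If $x_0=0$ then $\|x\|=0$ and, again because $d-2\geq 1$, $\nabla f(x)=0$ with $x\neq 0$, contradicting regularity; so $x_0\neq 0$. The two blocks of $G_\lambda$ being homogeneous in $(x_0,x)$ of degrees $2$ and $d-1$, I may rescale $(x_0,x)$ by $1/x_0$ to assume $x_0=1$, which gives $\|x\|^2=1$ and $\frac{1}{d}\nabla f(x)=\lambda x$, so $\lambda$ is an E-eigenvalue.

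Since the proof is in essence substitution plus a normalization step, I do not expect any genuine obstacle. The points that need care are the use of $d\geq 3$ to annihilate the term $\lambda\|x\|^{d-2}x$ (respectively $\lambda x_0^{d-2}x$) on the isotropic locus $\{\|x\|=0\}$ — exactly where the statement can fail for $d=2$ — and checking in the odd case that rescaling $(x_0,x)$ is compatible with the different degrees of the two blocks of $G_\lambda$.
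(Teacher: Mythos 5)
Your proposal is correct and takes essentially the same approach as the paper: substitute the eigenpair into $F_\lambda$ (resp.\ $(1,x)$ into $G_\lambda$) and invoke Proposition \ref{def resultant} $i)$ for the forward direction, and for the converse use regularity to exclude solutions with $\|x\|=0$ (resp.\ $x_0=0$) before rescaling to a unit-norm eigenvector. The extra care you take with the $d-2\geq 1$ cancellation and the blockwise homogeneity of $G_\lambda$ is sound and matches the paper's argument in substance.
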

\begin{proof}
For completeness we recover and adapt the proofs in \cite[Theorem 4]{qi2007eigenvalues} and in \cite[Theorem 2.23]{qi2017tensor}.
Suppose that $x\in V^{\mathbb{C}}$ is an E-eigenvector of $f$ and $\lambda\in\mathbb{C}$ is the E-eigenvalue associated with $\lambda$. Then looking at Definition \ref{characteristic polynomial}, when $d$ is even we get that $x$ and $-x$ are non-zero solutions of the system $F_{\lambda}(x)=0$; when $d$ is odd, $(1,x)$ and $(-1,-x)$ are non-zero solutions of the system $G_{\lambda}(x_0,x)=0$. Therefore $\lambda$ is a root of $\psi_f$ by Proposition \ref{def resultant}.

On the other hand, suppose that $f$ is regular and let $\lambda\in\mathbb{C}$ be a root of $\psi_f$. By Definition \ref{characteristic polynomial} and Proposition \ref{def resultant}, when $d$ is even there exists a non-zero vector $x\in V$ such that $F_{\lambda}(x)=0$ for that $\lambda$; when $d$ is odd, there exists a non-zero vector $x\in V$ and $x_0\in\mathbb{C}$ such that $G_{\lambda}(x_0,x)=0$ for that $\lambda$. If $\|x\|=0$, both $F_{\lambda}(x)=0$ and $G_{\lambda}(x_0,x)=0$ yield the condition $\nabla f(x)=0$, which cannot be satisfied because of the regularity of $f$. Hence $\|x\|\neq 0$ and we consider $\tilde{x}=x/\|x\|$. Therefore, when $d$ is even the equation (\ref{new interpretation eigenvectors}) is satisfied by $(\lambda,\tilde{x})$ and $(\lambda,-\tilde{x})$, while for $d$ odd it is satisfied by $(\lambda,\tilde{x})$ and $(-\lambda,-\tilde{x})$. This implies that $\lambda$ is an E-eigenvalue of $f$.\qedhere
\end{proof}
\begin{ex}
Let us consider the case in which $d$ is even and $f=\|x\|^d$. Then equation (\ref{new interpretation eigenvectors}) becomes $\|x\|^{d-2}x=\lambda x$: this means, if $d=2$, that every non-zero vector $x\in V$ such that $\|x\|=1$ is an E-eigenvector of $f$ with E-eigenvalue $\lambda=1$ (and in fact the E-characteristic polynomial of $f$ is $\psi_f(\lambda)=(\lambda-1)^{n+1}$). Instead for $d>2$ every non-zero vector $x\in V$ such that $\|x\|=1$ is an E-eigenvector of $f$ with corresponding E-eigenvalue $\lambda=1$, and every non-zero vector $x\in V$ such that $\|x\|=0$ is an isotropic eigenvector of $f$. In particular $f$ is irregular for $d>2$, and in fact in this case the E-characteristic polynomial of $f$ is identically zero by Proposition \ref{zero charpoly}.
\end{ex}

The greatest difference among eigenvectors of a symmetric matrix and eigenvectors of a symmetric tensor of degree $d>2$ is related to the presence or not of isotropic eigenvectors. Suppose that $f\in\operatorname{Sym}^dV$ admits an isotropic eigenvector $x$ and let $P\coloneqq[x]$ be the corresponding point of the isotropic quadric $Q$. In the same fashion of Proposition \ref{prop: identity lambda}, this time we have that $f(x)=0$, that is, $P\in[f]$. As we will see in Section \ref{section: leading coefficient}, equation (\ref{new interpretation eigenvectors}) acquires a new interesting meaning: the isotropic eigenvectors of $f$ are all the non-zero vectors $x$ such that $[x]\eqqcolon P\in [f]\cap Q$ and $P$ is singular for $[f]$ (and hence $f$ is irregular) or $P$ is smooth for $[f]$ and $[f]$ is tangent to $Q$ at $P$.

We study more in detail the coefficients of the E-characteristic polynomial of a symmetric tensor. Given a general $f\in\operatorname{Sym}^dV$, from Theorem \ref{expected number of E-eigenvalues} and Theorem \ref{eigenvalues as roots of charpoly} we have that $\deg(\psi_f)\leq N$ for $d$ even, where $N\coloneqq n+1$ for $d=2$, whereas $N\coloneqq((d-1)^{n+1}-1)/(d-2)$ for $d\geq 3$. Thus $\psi_f$ can be written as
\begin{equation}\label{rmk on writing of charpoly}
\psi_f(\lambda)=\sum_{j=0}^{N}c_j\lambda^j,
\end{equation}
where for all $j=0,\ldots,N$ the coefficient $c_j=c_j(n,d)$ is a homogeneous polynomial in the coefficents of $f$. Otherwise if $d$ is odd and $(\lambda,x)$ is an E-eigenpair of $f$, then $(-\lambda,-x)$ is an E-eigenpair of $f$ as well. This means that for $d$ odd the E-characteristic polynomial $\psi_f$ has maximum degree $N$ in $\lambda^2$ and in particular it contains only even power terms of $\lambda$. Hence $\psi_f$ can be written explicitly as
\begin{equation}\label{rmk on writing of charpoly, odd case}
\psi_f(\lambda)=\sum_{j=0}^{N}c_{2j}\lambda^{2j}.
\end{equation}

Now we focus on the constant term of the E-characteristic polynomial $\psi_f$. In particular we recover the fact that, when non-zero, the constant term of $\psi_f$ is a power of $\operatorname{Res}\left(\frac{1}{d}\nabla f\right)$ times a constant factor, as in the proof of \cite[Theorem 3.5]{li2012characteristic}.

\begin{thm}\label{thm: constant term of charpoly}
Let $f\in\operatorname{Sym}^dV$. Then for even $d$ we have that 
\begin{equation}\label{c_0(d) d even}
c_0=c\cdot\operatorname{Res}\left(\frac{1}{d}\nabla f\right),
\end{equation}
while for odd $d$ we have that
\begin{equation}\label{c_0(d) d odd}
c_0=c\cdot\operatorname{Res}\left(\frac{1}{d}\nabla f\right)^2
\end{equation}
for some constant $c\in\mathbb{Z}$ depending on $n$ and $d$.
\end{thm}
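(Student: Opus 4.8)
The plan is to note that the constant term is $c_0=\psi_f(0)$ and to compute $\psi_f(0)$ directly from Definition~\ref{characteristic polynomial}. By (\ref{rmk on writing of charpoly}) and (\ref{rmk on writing of charpoly, odd case}) the constant term of $\psi_f$ equals $\psi_f(0)$. When $d$ is even, substituting $\lambda=0$ into (\ref{def charpoly even}) yields $F_0(x)=\frac{1}{d}\nabla f(x)$, so $c_0=\operatorname{Res}(F_0)=\operatorname{Res}\left(\frac{1}{d}\nabla f\right)$, which is exactly (\ref{c_0(d) d even}) with $c=1$. When $d$ is odd, substituting $\lambda=0$ into (\ref{def charpoly odd}) yields
\[
G_0(x_0,x)=\begin{pmatrix} x_0^2-\|x\|^2\\ \frac{1}{d}\nabla f(x)\end{pmatrix},
\]
so $c_0=\operatorname{Res}(G_0)$, and the statement reduces to showing that $\operatorname{Res}(G_0)=c\cdot\operatorname{Res}\left(\frac{1}{d}\nabla f\right)^2$ for some constant $c$ depending only on $n$ and $d$.

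For the odd case I would exploit the special shape of $G_0$. Its last $n+1$ components form the gradient $\frac{1}{d}\nabla f$, a vector of $n+1$ forms of degree $d-1$ in the variables $x_1,\dots,x_{n+1}$ alone, which in particular do not involve the auxiliary variable $x_0$; meanwhile its first component $x_0^2-\|x\|^2$ has degree $2$, with the coefficient of $x_0^2$ in it equal to $1$. The product (Poisson-type) formula for resultants (see \cite{cox2006using,gelfand2008discriminants}), applied with the quadric $x_0^2-\|x\|^2$ singled out, then gives
\[
\operatorname{Res}(G_0)=\pm\,a_0^{\,(d-1)^{n+1}}\operatorname{Res}\left(\frac{1}{d}\nabla f\right)^{2},
\]
where $a_0$ is the coefficient of $x_0^2$ in $x_0^2-\|x\|^2$, and where the two exponents are respectively the product $(d-1)^{n+1}$ of the degrees of the components of $\frac{1}{d}\nabla f$ and the degree $2$ of $x_0^2-\|x\|^2$. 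Since $a_0=1$, this collapses to $\operatorname{Res}(G_0)=\pm\operatorname{Res}\left(\frac{1}{d}\nabla f\right)^2$, which is (\ref{c_0(d) d odd}) with $c=\pm1\in\mathbb{Z}$. As a consistency check, Proposition~\ref{degree resultant m polynomials} shows that both $\operatorname{Res}(G_0)$ and $\operatorname{Res}\left(\frac{1}{d}\nabla f\right)^2$ have degree $2(n+1)(d-1)^n$ in the coefficients of $f$; since moreover $\operatorname{Res}\left(\frac{1}{d}\nabla f\right)$ is irreducible (Proposition~\ref{def resultant}$(iii)$), there is no room in $\operatorname{Res}(G_0)$ for any further polynomial factor in the coefficients of $f$, and the precise value $c\in\{1,-1\}$ can be pinned down, if desired, by specializing $f$ to a scaled Fermat polynomial as in Proposition~\ref{def resultant}$(ii)$.

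The even case amounts to reading off a substitution; the content of the statement is the odd one, and I expect the main obstacle to be the bookkeeping in the product formula, namely checking that the only factor accompanying $\operatorname{Res}\left(\frac{1}{d}\nabla f\right)^2$ in $\operatorname{Res}(G_0)$ is a power of the leading $x_0^2$-coefficient of $x_0^2-\|x\|^2$, so that the quadric $x_0^2-\|x\|^2$ contributes no new invariant of $f$ (in particular, no power of the $\widetilde{Q}$-discriminant) to the constant term. As usual, it is enough to verify the polynomial identity on the dense open locus where $\operatorname{Res}\left(\frac{1}{d}\nabla f\right)\neq 0$ and then to extend it to every $f$ by Zariski density.
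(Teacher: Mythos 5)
Your proof is correct, and for the substantive (odd-degree) case it takes a genuinely different route from the paper. The paper argues indirectly: it shows that $G_0=0$ has a nontrivial solution if and only if $\operatorname{Res}\left(\frac{1}{d}\nabla f\right)=0$ (sending a nonzero zero $x$ of $\nabla f$ to $(\|x\|,x)$ and conversely), deduces from irreducibility of the resultant that $\operatorname{Res}(G_0)=\operatorname{Res}\left(\frac{1}{d}\nabla f\right)^k$, and pins down $k=2$ by comparing degrees in the coefficients of the partial derivatives. You instead compute $\operatorname{Res}(G_0)$ directly via the Poisson product formula, singling out the quadric and the variable $x_0$: since the components of $\frac{1}{d}\nabla f$ do not involve $x_0$, their restriction to $\{x_0=0\}$ is themselves, and the formula reads $\operatorname{Res}(G_0)=\operatorname{Res}\left(\frac{1}{d}\nabla f\right)^{2}\cdot\det(m_{\tilde{g}})$, where $\tilde{g}=1-\|x\|^2$ acts on $A=\mathbb{C}[x_1,\dots,x_{n+1}]/\left(\frac{1}{d}\nabla f\right)$. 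The one point you flag as ``bookkeeping'' is exactly the point to make explicit: on the dense locus $\operatorname{Res}\left(\frac{1}{d}\nabla f\right)\neq 0$ the ring $A$ is Artinian of length $(d-1)^{n+1}$ supported only at the origin, so $m_{\|x\|^2}$ is nilpotent, $m_{\tilde{g}}$ is unipotent, and $\det(m_{\tilde{g}})=\tilde{g}(0)^{(d-1)^{n+1}}=a_0^{(d-1)^{n+1}}=1$; Zariski density then extends the identity to all $f$. What this buys you is the sharper conclusion $c=\pm 1$ (in fact $c=1$ in the Cox--Little--O'Shea normalization), whereas the paper's argument leaves $c$ undetermined and must eliminate it later by the Fermat specialization. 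One caveat: your closing ``consistency check'' (degrees agree plus irreducibility, so no room for further factors) would not by itself prove the identity, since without first knowing that $\operatorname{Res}\left(\frac{1}{d}\nabla f\right)$ divides $\operatorname{Res}(G_0)$ --- which is what the paper's solution-matching step supplies --- the degree match is only necessary, not sufficient; but as your main argument is the product formula, this is harmless.
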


\begin{proof}
The relations (\ref{c_0(d) d even}) and (\ref{c_0(d) d odd}) are trivially satisfied when $f$ is irregular (compare with Proposition \ref{zero charpoly}), so we can assume $f$ regular. When $d$ is even, from relation (\ref{def charpoly even}) we have that
\[
c_0=\psi_f(0)=\left.\operatorname{Res}(F_{\lambda})\right|_{\{\lambda=0\}}=c\cdot\operatorname{Res}(F_0)=c\cdot\operatorname{Res}\left(\frac{1}{d}\nabla f\right)
\]
for some constant $c=c(n,d)\in\mathbb{Z}$.

Now suppose that $d$ is odd. From relation (\ref{def charpoly odd}) we have that
\[
c_0=\psi_f(0)=\left.\operatorname{Res}(G_{\lambda})\right|_{\{\lambda=0\}}=c\cdot\operatorname{Res}(G_{0}),\quad
G_{0}(x_0,x)=
\begin{pmatrix}
x_0^2-\|x\|^2 \\
\frac{1}{d}\nabla f(x)
\end{pmatrix}
\]
for some constant $c=c(n,d)\in\mathbb{Z}$. In order to prove relation (\ref{c_0(d) d odd}), it is sufficient to prove that 
\begin{equation}\label{equivalent relation}
\operatorname{Res}(G_0)=\operatorname{Res}\left(\frac{1}{d}\nabla f\right)^2.
\end{equation}
First of all, we prove that the system
\begin{equation}\label{equation constant term proof}
\left\{
\begin{array}{l}
x_0^2-\|x\|^2=0 \\
\frac{1}{d}\nabla f(x) = 0 
\end{array}
\right.
\end{equation}
has a nonzero solution if and only if $\operatorname{Res}\left(\frac{1}{d}\nabla f\right)=0$. Let $(x_0,x)$ be a non-zero solution of (\ref{equation constant term proof}). In particular, $x$ is a non-zero solution of $\nabla f(x)=0$. Thus, $\operatorname{Res}\left(\frac{1}{d}\nabla f\right)=0$. On the other hand, suppose that $\operatorname{Res}\left(\frac{1}{d}\nabla f\right)=0$. Then $\nabla f(x)=0$ admits a non-zero solution $x$ and $(\|x\|,x)$ is a non-zero solution of (\ref{equation constant term proof}).

Hence the equations $\operatorname{Res}(G_0)=0$ and $\operatorname{Res}\left(\frac{1}{d}\nabla f\right)=0$ define the same variety. By definition $\operatorname{Res}\left(\frac{1}{d}\nabla f\right)$ is an irreducible polynomial over $\mathbb{Z}$ in the coefficients of $f$. Therefore
\[
\operatorname{Res}(G_0)=\operatorname{Res}\left(\frac{1}{d}\nabla f\right)^k
\]
for some positive integer $k$. Since the polynomial $x_0^2-\|x\|^2$ is quadratic, from Proposition \ref{degree resultant m polynomials} we have that $\operatorname{Res}(G_0)$ is a homogeneous polynomial in the coefficients of $\partial f/\partial x_1,\ldots,\partial f/\partial x_{n+1}$ of degree $2(d-1)^n$. On the other hand, the degree of $\operatorname{Res}\left(\frac{1}{d}\nabla f\right)^k$ is $k(d-1)^n$. Therefore relation (\ref{equation constant term proof}) is satisfied only if $k=2$. This completes the proof.
\end{proof}

We apply the following result when we study the degree of the leading coefficient and of the constant term of $\psi_f$, viewed as polynomials in the coefficients of $f$ (see \cite[Proposition 3.6]{li2012characteristic}).
\begin{prop}\label{degrees of coefficients of charpoly}
Consider $f\in\operatorname{Sym}^dV$ and its E-characteristic polynomial $\psi_f$ written as in (\ref{rmk on writing of charpoly}), (\ref{rmk on writing of charpoly, odd case}).
\begin{itemize}
\item[$i)$] When $d$ is even, $c_i$ is a homogeneous polynomial in the coefficients of $f$ with degree $(n+1)(d-1)^n-i$. In particular $\deg(c_{N})=(n+1)(d-1)^n-N\eqqcolon\varphi_n(d)$, where the integer $N$ has been introduced in the Main Theorem. In particular $\varphi_n(2)=0$ for all $n\geq 1$.
\item[$ii)$] When $d$ is odd, $c_{2i}$ is a homogeneous polynomial in the entries of $f$ with degree $2(n+1)(d-1)^n-2i$. In particular $\deg(c_{2N})=2(n+1)(d-1)^n-2N=2\varphi_n(d)$.
\end{itemize}
\end{prop}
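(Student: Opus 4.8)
The plan is to read off both statements at once from the homogeneity of the resultant recorded in Proposition \ref{degree resultant m polynomials}, exploiting that in the vectors $F_\lambda$ of (\ref{def charpoly even}) and $G_\lambda$ of (\ref{def charpoly odd}) the parameter $\lambda$ enters linearly, so that a \emph{simultaneous} rescaling of $f$ and of $\lambda$ rescales these vectors uniformly. Concretely, I would substitute $f\mapsto tf$ and $\lambda\mapsto t\lambda$ for an auxiliary scalar $t$ and track how $\psi_f(\lambda)=\operatorname{Res}(F_\lambda)$ (Definition \ref{characteristic polynomial}) transforms; comparing coefficients of powers of $\lambda$ then pins down the degrees of the $c_i$.

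First I would handle the case $d$ even. From (\ref{def charpoly even}),
\[
\frac{1}{d}\nabla(tf)(x)-(t\lambda)\|x\|^{d-2}x=t\left(\frac{1}{d}\nabla f(x)-\lambda\|x\|^{d-2}x\right),
\]
so the vector built from $tf$ and $t\lambda$ is $t$ times the one built from $f$ and $\lambda$. Its $n+1$ components are homogeneous of degree $d-1$ in $x$, so by Proposition \ref{degree resultant m polynomials} the resultant is homogeneous of degree $(d-1)^n$ in the coefficients of each component, whence multiplying all $n+1$ components by $t$ multiplies $\operatorname{Res}$ by $t^{(n+1)(d-1)^n}$. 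Writing $\psi_f$ as in (\ref{rmk on writing of charpoly}) this gives
\[
\sum_{i=0}^{N}c_i(tf)\,t^i\lambda^i=\psi_{tf}(t\lambda)=t^{(n+1)(d-1)^n}\psi_f(\lambda),
\]
and comparing the coefficients of $\lambda^i$ yields $c_i(tf)=t^{(n+1)(d-1)^n-i}c_i(f)$, which is precisely the homogeneity claim in $i)$. Specializing $i=N$ gives $\deg(c_N)=(n+1)(d-1)^n-N=\varphi_n(d)$, and $\varphi_n(2)=(n+1)\cdot1-(n+1)=0$ since $N=n+1$ when $d=2$.

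For $d$ odd the same computation works, with one bookkeeping point to get right: in $G_\lambda$ of (\ref{def charpoly odd}) the first component $x_0^2-\|x\|^2$ contains neither $f$ nor $\lambda$, hence is unchanged by the substitution, while each of the remaining $n+1$ components (homogeneous of degree $d-1$ in $(x_0,x)$) is again multiplied by $t$. Applying Proposition \ref{degree resultant m polynomials} to the $n+2$ forms of degrees $2,d-1,\dots,d-1$, the resultant $\operatorname{Res}(G_\lambda)$ is homogeneous of degree $2(d-1)^n$ in the coefficients of each of these last $n+1$ components, so rescaling them all by $t$ multiplies $\operatorname{Res}(G_\lambda)$ by $t^{2(n+1)(d-1)^n}$. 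Thus $\psi_{tf}(t\lambda)=t^{2(n+1)(d-1)^n}\psi_f(\lambda)$, and comparing the coefficients of $\lambda^{2i}$ in (\ref{rmk on writing of charpoly, odd case}) gives $c_{2i}(tf)=t^{2(n+1)(d-1)^n-2i}c_{2i}(f)$, i.e. $\deg(c_{2i})=2(n+1)(d-1)^n-2i$; for $i=N$ this is $2\varphi_n(d)$.

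The computation itself is routine; the point that still deserves a word is that the degree statements for the leading coefficients $c_N$ (resp.\ $c_{2N}$) are not vacuous, i.e. that these polynomials do not vanish identically in the coefficients of $f$. For $d\geq3$ this follows from Theorem \ref{expected number of E-eigenvalues} together with Theorem \ref{eigenvalues as roots of charpoly}: a general $f$ is regular, so its E-eigenvalues are exactly the roots of $\psi_f$, and there are $N$ distinct such roots ($d$ even) or $N$ distinct pairs $(\lambda,-\lambda)$ of them ($d$ odd), which forces $\deg(\psi_f)=N$ (resp.\ $2N$); for $d=2$ one has $\psi_f(\lambda)=\det(A-\lambda I)$ for the symmetric matrix $A$ of $f$, whose leading coefficient is $\pm1$. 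The only (mild) obstacle I anticipate is keeping the multi-degree bookkeeping of the resultant straight — above all, recalling that in the odd case the quadric $x_0^2-\|x\|^2$ contributes the factor $2$ to the relevant degree while being itself \emph{inert} under the rescaling.
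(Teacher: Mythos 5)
Your scaling argument is correct: the identity $\psi_{tf}(t\lambda)=t^{(n+1)(d-1)^n}\psi_f(\lambda)$ (resp.\ $t^{2(n+1)(d-1)^n}$ in the odd case) follows exactly as you say from the multihomogeneity of the resultant in Proposition \ref{degree resultant m polynomials}, with the quadric $x_0^2-\|x\|^2$ correctly treated as inert, and comparing coefficients of $\lambda^i$ gives the stated degrees; your closing remark on the non-vanishing of $c_N$ (resp.\ $c_{2N}$) via Theorems \ref{expected number of E-eigenvalues} and \ref{eigenvalues as roots of charpoly} is exactly what is needed to make the degree assertions for the leading coefficient non-vacuous. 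The paper itself gives no proof here — it only cites \cite[Proposition 3.6]{li2012characteristic} — so your write-up supplies a self-contained argument along the standard lines of that reference, and I see no gap in it.
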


%\begin{proof}
%When $d$ is even, considering the polynomial $F_{\lambda}$ as in (\ref{def charpoly even}), we may see that $F_{\lambda}(x)=0$ is a system of homogeneous polynomials in $x_1,\ldots,x_{n+1}$. Every equation of this system has the same degree $d-1$. Thus, from Proposition \ref{degree resultant m polynomials}, $\psi_f(\lambda)$ is a homogeneous polynomial in the entries of $f$ and $\lambda$ of degree $(n+1)(d-1)^n$. Hence $c_i(n,d)$ is a homogeneous polynomial in the entries of $f$ with degree $(n+1)(d-1)^n-i$. In particular the leading coefficient $c_{h(n,d)}(n,d)$ has degree $\deg(c_{h(n,d)}(n,d))=\varphi_n(d)$.

%When $d$ is odd, considering $G_{\lambda}(x_0,x)$ as in (\ref{def charpoly odd}), then $G_{\lambda}(x_0,x)=0$ is a system of homogeneous polynomials in $x_0,x_1,\dots,x_{n+1}$. The coeffcients of the first equation of this system are either $1$ or $-1$, while the last two equations have the same degree $d-1$. Thus, again from \ref{degree resultant m polynomials}, $\psi_f(\lambda)$ is a homogeneous polynomial in the entries of $f$ and $\lambda$ of degree $2(n+1)(d-1)^n$. Hence $c_{2i}(n,d)$ is a homogeneous polynomial in the entries of $f$ with degree $2(n+1)(d-1)^n-2i$. In particular the leading coefficient $c_{2h(n,d)}(n,d)$ has degree $\deg(c_{2h(n,d)}(n,d))=2\varphi_n(d)$. 
%\end{proof}

\begin{rmk}\label{rmk: on the polynomial phi(n,d)}
It can be easily showed that the polynomial $\varphi_n(d)$ defined in Proposition \ref{degrees of coefficients of charpoly} is a strictly increasing function in the variable $d$. This fact, together with Proposition \ref{degrees of coefficients of charpoly}, implies that $c_{N}$ (respectively $c_{2N}$) has positive degree in the coefficients of $f$ for all $n\geq 1$ and $d>2$.
\end{rmk}

We have this natural question: is there a geometric meaning for the vanishing of the leading coefficient of $\psi_f$? The answer is positive and will be stated in Proposition \ref{lemma 7.1 cinesi}.

\section{Proof of the Main Theorem}\label{section: leading coefficient}

In this section we give the proof of the Main Theorem. The proof starts with an example: in fact, the next lemma studies the product of the E-eigenvalues of a particular class of symmetric tensors, the scaled Fermat polynomials $f(x_1,\ldots,x_{n+1})=a_1x_1^d+\cdots+a_{n+1}x_{n+1}^d$, where $a_1,\ldots,a_{n+1}\in\mathbb{C}$. This result is important to prove the identity (\ref{product eigenvalues of the theorem}) up to sign in the statement of the Main Theorem.

\begin{lmm}\label{scaled Fermat polynomials}
Let $d\geq 2$ and consider the scaled Fermat polynomial $f=a_1x_1^d+\cdots+a_{n+1}x_{n+1}^d$, where $a_1,\ldots,a_{n+1}\in\mathbb{C}$. The product $\lambda_1\cdots\lambda_N$ of the E-eigenvalues of $f$, where $N$ is the number defined in Theorem \ref{expected number of E-eigenvalues}, can be written as
\begin{equation}\label{product_eigenvalues}
\lambda_1\cdots\lambda_N=\frac{\operatorname{Res}\left(\frac{1}{d}\nabla f\right)}{h^{\frac{d-2}{2}}},
\end{equation}
where $h=h(a_1,\ldots,a_{n+1})$ is a homogeneous polynomial of degree $2\varphi_n(d)/(d-2)$ and the polynomial $\varphi_{n,d}$ has been defined in Proposition \ref{degrees of coefficients of charpoly}. Moreover, the leading term of $h$ with respect to the lexicographic term order is monic and it is equal to
\[
LT_{Lex}(h)=\prod_{s=1}^{n+1}a_{s}^{2\frac{(d-1)^n-(d-1)^{s-1}}{d-2}}.
\]
\end{lmm}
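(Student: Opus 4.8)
The plan is to compute directly: list all the E-eigenpairs of the scaled Fermat polynomial $f=a_1x_1^d+\cdots+a_{n+1}x_{n+1}^d$ grouped by the support of the eigenvector, multiply them, and recognise the resulting denominator as a polynomial $h$. For $d=2$ this is immediate: the E-eigenvectors are $\pm e_1,\dots,\pm e_{n+1}$ with E-eigenvalues $a_1,\dots,a_{n+1}$, so $\lambda_1\cdots\lambda_N=a_1\cdots a_{n+1}=\operatorname{Res}(\tfrac1d\nabla f)$ and $h=1$. Assume henceforth $d\geq3$. The equation $\tfrac1d\nabla f(x)=\lambda x$ reads $a_ix_i^{d-1}=\lambda x_i$, so for each nonempty $S\subseteq\{1,\dots,n+1\}$ the projective eigenvectors with support exactly $S$ are the solutions in $\mathbb{P}^{|S|-1}$ of $a_ix_i^{d-2}=a_jx_j^{d-2}$ for $i,j\in S$; in the chart $x_{\max S}=1$, writing $t_i=x_i$, this is $t_i^{d-2}=a_{\max S}/a_i$, which for $a_i\neq 0$ has exactly $(d-2)^{|S|-1}$ simple solutions, all with nonzero coordinates. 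Writing $E_S$ for this set, the binomial theorem gives $\sum_{\emptyset\neq S}(d-2)^{|S|-1}=\bigl((d-1)^{n+1}-1\bigr)/(d-2)=N$. Whenever all $a_i\neq 0$ the tensor $f$ is regular (since $\nabla f(x)=0$ forces $x=0$), and for generic such $(a_i)$ no $[v]\in E_S$ lies on $Q$, so by Theorem \ref{eigenvalues as roots of charpoly} these $N$ projective eigenvectors carry all the E-eigenvalues of $f$. For $[v]\in E_S$ normalised by $v_{\max S}=1$ the eigenvector equation gives the E-eigenvalue $\lambda([v])=a_{\max S}/\|v\|^{d-2}$. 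That $\lambda_1\cdots\lambda_N$ equals the product of the $\lambda([v])$ over all $[v]$, i.e.\ that each projective eigenvector contributes exactly one root of $\psi_f$ counted with multiplicity, I would check via the multiplicativity of the resultant applied to the factorisations $\bigl(F_\lambda(x)\bigr)_i=x_i\bigl(a_ix_i^{d-2}-\lambda\|x\|^{d-2}\bigr)$ and, for $d$ odd, the analogous one for $G_\lambda$; up to the sign of Definition \ref{def eigenvalue eigenvector} in the odd case this gives $\lambda_1\cdots\lambda_N=\prod_{\emptyset\neq S}\prod_{[v]\in E_S}\lambda([v])$.

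Using $\sum_{\max S=j}(d-2)^{|S|-1}=(d-1)^{j-1}$ and $\operatorname{Res}(\tfrac1d\nabla f)=(a_1\cdots a_{n+1})^{(d-1)^n}$ (Proposition \ref{def resultant}), the product just obtained becomes
\begin{equation*}
\lambda_1\cdots\lambda_N=\frac{\prod_{j=1}^{n+1}a_j^{(d-1)^{j-1}}}{P^{(d-2)/2}},\qquad
P\coloneqq\prod_{\emptyset\neq S\subseteq\{1,\dots,n+1\}}\ \prod_{[v]\in E_S}\|v\|^2 .
\end{equation*}
Hence $\operatorname{Res}(\tfrac1d\nabla f)/(\lambda_1\cdots\lambda_N)=\bigl(\prod_j a_j^{(d-1)^n-(d-1)^{j-1}}\bigr)\,P^{(d-2)/2}$, and since $(d-1)^n-(d-1)^{j-1}=(d-1)^{j-1}\bigl((d-1)^{n-j+1}-1\bigr)$ is divisible by $d-2$, the exponent $e_j\coloneqq\tfrac{2}{d-2}\bigl((d-1)^n-(d-1)^{j-1}\bigr)$ is a non-negative integer. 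Setting $h\coloneqq\bigl(\prod_j a_j^{e_j}\bigr)P$ yields $\lambda_1\cdots\lambda_N=\operatorname{Res}(\tfrac1d\nabla f)/h^{(d-2)/2}$; and as $h^{(d-2)/2}$ is homogeneous of degree $(n+1)(d-1)^n-N=\varphi_n(d)$ in the $a_i$, $h$ is homogeneous of degree $2\varphi_n(d)/(d-2)$.

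It remains to check that $h$ is an honest integral polynomial and to compute its leading term, which amounts to understanding $P$. Fix $S$ and substitute $w_i=v_i^2$: squaring is a bijection on the $(d-2)$-th roots when $d$ is odd (as $\gcd(2,d-2)=1$) and is two-to-one onto the $\tfrac{d-2}{2}$-th roots when $d$ is even, so $\prod_{[v]\in E_S}\|v\|^2$ is, respectively, a product of values of the linear form $1+\sum_i w_i$ over the solutions of a system of binomials $w_i^{\ell}=\gamma_i$ (with $\gamma_i$ a monomial in $a_{\max S}/a_i$), or the square of such a product. By iterated resultants this is a polynomial in the $\gamma_i$, hence in the $a_j^{\pm1}$, whose value at $\gamma_i=0$ for all $i$ equals $1$; consequently every other monomial of it carries a strictly negative power of some $a_i$ with $i<\max S$, so its leading term for $a_1\succ\cdots\succ a_{n+1}$ is $1$. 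Multiplicativity of the leading term gives $LT_{Lex}(P)=1$, whence $LT_{Lex}(h)=\prod_j a_j^{e_j}=\prod_{s=1}^{n+1}a_s^{2\frac{(d-1)^n-(d-1)^{s-1}}{d-2}}$, which is monic; and the same analysis of leading monomials, now bounding the largest power of each $a_j^{-1}$ occurring in $P$ over all $S$, shows it never exceeds $e_j$, so the monomial prefactor $\prod_j a_j^{e_j}$ clears all denominators and $h$ is a polynomial.

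The main obstacle is exactly this last point: across the $2^{n+1}$ subsets $S$, and with the factor-of-two discrepancy between the even and the odd case in the substitution $w_i=v_i^2$, one must pin down the precise $a_j$-denominator of $P$ and verify it is bounded by $e_j$. Everything else is direct computation; the smallest cases $n=1$, $d=3$ and $d=4$ already exhibit the mechanism, giving $h=a_1^2+a_2^2$ and $h=(a_1+a_2)^2$ respectively.
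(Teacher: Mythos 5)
Your route is essentially the paper's: enumerate the eigenpairs of the Fermat form by the support of the eigenvector, read off each eigenvalue as a monomial in the $a_i$ divided by $\|v\|^{d-2}$, multiply, and identify the numerator with $\operatorname{Res}(\tfrac1d\nabla f)$ via $\sum_{\max S=j}(d-2)^{|S|-1}=(d-1)^{j-1}$. The one real divergence is the normalization. The paper substitutes $a_i=\xi_i^{d-2}$ and chooses representatives whose coordinates are \emph{polynomials} in the $\xi_i$, so its $h$ is manifestly a polynomial (a product of the explicit sums $\xi_{k_2}^2\cdots\xi_{k_j}^2+\sum_l\xi_{k_1}^2\cdots\widehat{\xi_{k_l}^2}\cdots\xi_{k_j}^2\varepsilon^{2\alpha_{k_l}}$), and the leading-term formula drops out by multiplying the Lex-leading terms of these factors. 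Your normalization $v_{\max S}=1$ makes each $\|v\|^2$ a Laurent expression in the $a_i$, so you must clear denominators with the prefactor $\prod_j a_j^{e_j}$ — and that is exactly the step you leave open.

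That step is the only genuine gap, and it does close: in each factor $\|v\|^2=1+\sum_{i\in S\setminus\{\max S\}}v_i^2$ the variable $v_i^2$ occurs to the first power, so the worst negative power of $a_i$ in $P_S=\prod_{[v]\in E_S}\|v\|^2$ is the one from $\prod_{[v]}v_i^2=\pm(a_{\max S}/a_i)^{2(d-2)^{|S|-2}}$, i.e.\ $2(d-2)^{|S|-2}$; summing over all $S$ containing $i$ with $\max S>i$ gives $\sum_{m>i}2(d-1)^{m-2}=\tfrac{2}{d-2}\bigl((d-1)^n-(d-1)^{i-1}\bigr)=e_i$, so $\prod_ja_j^{e_j}$ clears every denominator (with equality, which also shows $h$ has a nonzero trailing term). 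You should also make explicit that the Lex-leading coefficient of $P_S$ really is $1$ and not a root of unity — in your setup this is automatic since the constant term of $\prod(1+\sum w_i)$ is $1$; in the paper's setup the analogous point is that $\prod_{\alpha_{k_j}=0}^{d-3}\varepsilon^{2\alpha_{k_j}}=1$. Finally, your passage from the set of projective eigenvectors to the multiset of roots of $\psi_f$ is only sketched, but for generic $a_i$ the $N$ values found are distinct and Theorem \ref{expected number of E-eigenvalues} caps their number at $N$, so the identity of rational functions follows by density; the paper is no more explicit on this point than you are.
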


\begin{proof}
In this case, rewriting the number $N$ of E-eigenvalues as $N=\sum_{j=1}^{n+1}{n+1\choose j}(d-2)^{j-1}$, the binomial ${n+1\choose j}$ denotes the number of E-eigenvalues for $f$ whose corresponding E-eigenvectors have exactly $j$ non-zero coordinates, while the factor $(d-2)^{j-1}$ corresponds to the number of $(j-1)$-arrangements (allowing repetitions) of the elements of $\{0,1,\ldots,d-3\}$, for all $j=1,\ldots,n+1$. Let $x=(x_1,\ldots,x_{n+1})$, $\|x\|=1$ be an E-eigenvector of $f$. We have
\begin{equation}\label{fermat1}
a_ix_i^{d-1}=\lambda x_i\quad\forall i=1,\ldots,n+1.
\end{equation}
Suppose that exactly $j$ coordinates of $x$ are non-zero, call them $x_{k_1},\ldots,x_{k_j}$ with indices $1\leq k_1<\cdots<k_j\leq n+1$. Moreover, we write $a_i=\xi_i^{d-2}$ for all $i=1,\ldots,n+1$. Looking at (\ref{fermat1}), if $x_i\neq 0$ we obtain that $\lambda=a_ix_i^{d-2}=(\xi_ix_i)^{d-2}$ for all $i=1,\ldots,n+1$. Moreover, considering (\ref{fermat1}) with respect to the indices $i_1<i_2$, we get the relations $a_{i_1}x_{i_1}^{d-1}=\lambda x_{i_1}$, $a_{i_2}x_{i_2}^{d-1}=\lambda x_{i_2}$, from which we obtain the equation $x_{i_1}x_{i_2}\prod_{k=0}^{d-3}(\xi_{i_1}x_{i_1}-\varepsilon^k\xi_{i_2}x_{i_2})=0$, where $\varepsilon$ is a $(d-2)$-th root of unity. This means that, for any indices $i_1<i_2$ it could be that $x_{i_1}=0$, $x_{i_2}=0$ or $\xi_{i_1}x_{i_1}=\varepsilon^k\xi_{i_2}x_{i_2}$ for some $k\in\{0,\ldots,d-3\}$. Therefore the coordinates of $x$, when non-zero, can be always written as $x_{k_l}=\xi_{k_1}\cdots\widehat{\xi_{k_l}}\cdots\xi_{k_j}\varepsilon^{\alpha_{k_l}}/\|x\|$, where $\alpha_{k_l}\in\{0,1,\ldots,d-3\}$ for all $l=1,\ldots,j$. Since $|\varepsilon|=1$, we can assume $\alpha_{k_1}=0$. In addition to this, the norm of $x$ can be written as $\|x\|=\left(\xi_{k_2}^2\cdots\xi_{k_j}^2+\sum_{l=2}^j\xi_{k_1}^2\cdots\widehat{\xi_{k_l}^2}\cdots\xi_{k_j}^2\varepsilon^{2\alpha_{k_l}}\right)^{1/2}$
and the E-eigenvalue corresponding to $x$ is $\lambda=(\xi_{k_l}x_{k_l})^{d-2}=a_{k_1}\cdots a_{k_j}/\|x\|^{d-2}$. From this argument we obtain that the product of the E-eigenvalues of the scaled Fermat polynomial $f$ is equal to $\lambda_1\cdots\lambda_N = g/h^{(d-2)/2}$, where
\begin{align}\label{big product}
g=g(a_1,\ldots,a_{n+1}) & \coloneqq \prod_{j=1}^{n+1}\prod_{1\leq k_1<\cdots<k_j\leq n+1}\prod_{\alpha_{k_2},\ldots,\alpha_{k_j}=0}^{d-3}a_{k_1}\cdots a_{k_j},\\
h=h(a_1,\ldots,a_{n+1}) & \coloneqq \prod_{j=1}^{n+1}\prod_{1\leq k_1<\cdots<k_j\leq n+1}\prod_{\alpha_{k_2},\ldots,\alpha_{k_j}=0}^{d-3}\left(\xi_{k_2}^2\cdots\xi_{k_j}^2+\sum_{l=2}^j\xi_{k_1}^2\cdots\widehat{\xi_{k_l}^2}\cdots\xi_{k_j}^2\varepsilon^{2\alpha_{k_l}}\right).
\end{align}
Now consider in particular the polynomial $g$ defined in (\ref{big product}). We have that
\[
g = \prod_{j=1}^{n+1}\prod_{1\leq k_1<\cdots<k_j\leq n+1}(a_{k_1}\cdots a_{k_j})^{(d-2)^{j-1}} = \prod_{j=1}^{n+1}(a_1\cdots a_{n+1})^{{n\choose j-1}(d-2)^{j-1}} = (a_1\cdots a_{n+1})^{(d-1)^n},
\]
where the last polynomial coincides exactly with $\operatorname{Res}\left(\frac{1}{d}\nabla f\right)$ by Proposition \ref{def resultant}. On the other hand, having fixed $Lex$ as term order in $\mathbb{Z}[a_1,\ldots,a_{n+1}]$, the leading term of $h$ is equal to
\[
LT_{Lex}(h) = \prod_{j=2}^{n+1}\prod_{1\leq k_1<\cdots<k_j\leq n+1}\prod_{\alpha_{k_2},\ldots,\alpha_{k_j}=0}^{d-3}\xi_{k_1}^2\cdots\xi_{k_{j-1}}^2\varepsilon^{2\alpha_{k_j}} = \prod_{j=2}^{n+1}\prod_{1\leq k_1<\cdots<k_j\leq n+1}(\xi_{k_1}^2\cdots\xi_{k_{j-1}}^2)^{(d-2)^{j-1}}.
\]
% &= \prod_{s=1}^{n+1}\xi_{s}^{2\sum_{j=2}^{n+1}\left[{n\choose j-1}-{s-1\choose j-1}\right](d-2)^{j-1}}\\
% &= \prod_{s=1}^{n+1}\xi_{s}^{2\left[(d-1)^n-1-\sum_{j=2}^{n+1}{s-1\choose j-1}(d-2)^{j-1}\right]}\\
% &= \prod_{s=1}^{n+1}\xi_{s}^{2\left[(d-1)^n-1-\sum_{j=2}^{s}{s-1\choose j-1}(d-2)^{j-1}\right]}\\
Observe that in the last product (with $j$ fixed) the factors $\xi_1^{2(d-2)^{j-1}},\ldots,\xi_{j-1}^{2(d-2)^{j-1}}$ appear ${n\choose j-1}$ times, while $\xi_s^{2(d-2)^{j-1}}$ appears ${n\choose j-1}-{s-1\choose j-1}$ times for $s=j,\ldots,n+1$. Hence (assuming ${a\choose b}=0$ for $a<b$)
\[
LT_{Lex}(h) = \prod_{j=2}^{n+1}\prod_{s=1}^{n+1}\xi_{s}^{2\left[{n\choose j-1}-{s-1\choose j-1}\right](d-2)^{j-1}} = \prod_{s=1}^{n+1}\xi_{s}^{2\left[(d-1)^n-(d-1)^{s-1}\right]} = \prod_{s=1}^{n+1}a_{s}^{2\frac{(d-1)^n-(d-1)^{s-1}}{d-2}}.\qedhere
\]
\end{proof}

\begin{rmk}
Observe that in Lemma \ref{scaled Fermat polynomials} the degree of $LT_{Lex}(h)$, that is the total degree of $h$, is
\[
\frac{2}{d-2}\sum_{s=1}^{n+1}[(d-1)^n-(d-1)^{s-1}]=\frac{2}{d-2}\left((n+1)(d-1)^n-\frac{(d-1)^{n+1}-1}{d-2}\right)=\frac{2}{d-2}\varphi_n(d),
\]
where $\varphi_n(d)$ has been introduced in Proposition \ref{degrees of coefficients of charpoly} and $d\geq 3$. This value is the one expected as showed in the sequel.
\end{rmk}

Now we move to the general case. We recall the definition of the \emph{polar classes} $\delta_j(X)$ associated to a projective variety $X\subseteq\mathbb{P}^n$ of dimension $r$.

Consider the conormal variety $Z(X)$ of $X$ introduced in Definition \ref{conormal variety} and the \emph{Chow cohomology class}
\[
[Z(X)]\in A^*(\mathbb{P}^n\times(\mathbb{P}^n)^{\vee})=\mathbb{Z}[u,v],
\]
where $A^*(\mathbb{P}^n\times(\mathbb{P}^n)^{\vee})$ denotes the Chow (cohomology) ring of $\mathbb{P}^n\times(\mathbb{P}^n)^{\vee}$, $u=pr^*_1([H])$, $v=pr^*_2([H'])$ and $H$, $H'$ denote hyperplanes in $\mathbb{P}^n$ and $(\mathbb{P}^n)^{\vee}$, respectively. Then $[Z(X)]$ can be written as
\[
[Z(X)]=\sum_{j=0}^{r-1}\delta_j(X)u^{n-j}v^{j+1},
\]
where $\delta_j(X)$ is a non-negative integer for all $j=0,\ldots,r-1$. If $X$ is smooth, we have the following formulas for the invariants $\delta_j(X)$ (see \cite{holme1988geometric}):
\begin{equation}\label{formulas delta_j(X)}
\delta_j(X)=\sum_{k=j}^{r}(-1)^{r-k}{k+1\choose j+1}\deg(c_{r-k}(TX)),
\end{equation}
where $\deg(c_{r-k}(TX))$ is the degree of the $(r-k)$-th Chern class of the tangent bundle of $X$.

We will use the following result (see \cite[Theorem 3.4]{holme1988geometric}).
\begin{thm}\label{dimension dual of X}
If $\delta_j(X)=0$ for all $j<l$ and $\delta_l(X)\neq 0$, then $\dim(X^{\vee})=n-1-l$ and $\delta_l(X)=\deg(X^{\vee})$.
\end{thm}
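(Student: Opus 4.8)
The plan is to translate the statement entirely into intersection theory on $\mathbb{P}^n\times(\mathbb{P}^n)^{\vee}$, feeding the given expression $[Z(X)]=\sum_{j=0}^{r-1}\delta_j(X)u^{n-j}v^{j+1}$ into the classical descriptions of the dimension and the degree of a subvariety of projective space via its intersections with a general linear subspace. Throughout I would work over $\mathbb{C}$, so that Kleiman's transversality theorem applies: for a general linear subspace $P'\subseteq(\mathbb{P}^n)^{\vee}$ of codimension $m$ the scheme $W_m\coloneqq Z(X)\cap pr_2^{-1}(P')$ is either empty or generically reduced of pure dimension $n-1-m$, and its class in $A^*(\mathbb{P}^n\times(\mathbb{P}^n)^{\vee})$ (with the relations $u^{n+1}=v^{n+1}=0$) equals $[Z(X)]\cdot v^m$. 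Two elementary facts will be used repeatedly: a nonempty closed subvariety of $\mathbb{P}^n\times(\mathbb{P}^n)^{\vee}$ has nonzero Chow class, since $\mathcal{O}(1,1)$ is ample; and $pr_2^{-1}(P')$ meets $Z(X)$ if and only if $P'$ meets $X^{\vee}=pr_2(Z(X))$, because $\pi_2$ is surjective onto $X^{\vee}$.

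\textbf{Dimension of $X^{\vee}$.} Multiplying the given formula by $v^m$ gives $[W_m]=\sum_{j=0}^{r-1}\delta_j(X)u^{n-j}v^{j+1+m}$, and the monomial $u^{n-j}v^{j+1+m}$ survives in the Chow ring exactly when $j+1+m\leq n$. Since $\delta_j(X)=0$ for $j<l$ and $\delta_l(X)\neq 0$, the class $[W_m]$ is nonzero if and only if $l+1+m\leq n$, i.e. $m\leq n-1-l$. By the two facts above, $W_m\neq\emptyset$ for general $P'$ of codimension $m$ if and only if $X^{\vee}\cap P'\neq\emptyset$ for general such $P'$, which happens if and only if $m\leq\dim(X^{\vee})$. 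Hence $\dim(X^{\vee})=n-1-l$; set $s\coloneqq n-1-l$.

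\textbf{Degree of $X^{\vee}$.} Now take $m=s$. On one hand, $[W_s]=\sum_{j}\delta_j(X)u^{n-j}v^{j+n-l}$, where the surviving terms satisfy $j+n-l\leq n$, i.e. $j\leq l$; combined with $\delta_j(X)=0$ for $j<l$, only $j=l$ contributes, so $[W_s]=\delta_l(X)\,u^{n-l}v^n$. On the other hand, for general $P'$ of codimension $s$ the set $X^{\vee}\cap P'$ consists of $\deg(X^{\vee})$ reduced points, all lying in the dense open locus of $X^{\vee}$ over which $\pi_2$ behaves generically, and by Kleiman transversality $W_s=\pi_2^{-1}(X^{\vee}\cap P')$ is the reduced union, each with multiplicity one, of the fibers $\pi_2^{-1}(H)$ for $H\in X^{\vee}\cap P'$. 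By the biduality theorem $(X^{\vee})^{\vee}=X$, valid in characteristic $0$, for a general $H\in X^{\vee}$ the fiber $\pi_2^{-1}(H)$ is the linear subspace of $\mathbb{P}^n\cong((\mathbb{P}^n)^{\vee})^{\vee}$ consisting of the hyperplanes of $(\mathbb{P}^n)^{\vee}$ that contain the tangent space $T_H X^{\vee}$; it has dimension $n-1-\dim(X^{\vee})=l$. Writing $\pi_2^{-1}(H)=\{H\}\times\Lambda_H$ with $\Lambda_H$ a linear $l$-plane, each such fiber has class $u^{n-l}v^n$, and summing over the points of $X^{\vee}\cap P'$ gives $[W_s]=\deg(X^{\vee})\,u^{n-l}v^n$. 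Comparing the two computations yields $\delta_l(X)=\deg(X^{\vee})$.

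\textbf{Main obstacle.} The crux is the linearity of the general contact locus: without it the argument only produces $\delta_l(X)=\sum_{H}\deg\bigl(\pi_2^{-1}(H)\bigr)$, the sum of the degrees of the fibers of $\pi_2$ over the general points $H\in X^{\vee}\cap P'$, and one genuinely needs reflexivity of the conormal variety to know that each such fiber is a linear space, hence of degree one. A secondary point demanding care is the bookkeeping in Kleiman's theorem — ensuring that for general $P'$ the intersection $W_s$ is reduced with all fibers occurring with multiplicity one, and that $X^{\vee}\cap P'$ consists of smooth points of $X^{\vee}$ in the locus where $\pi_2$ has linear fibers — which is exactly where the characteristic-zero hypothesis is used.
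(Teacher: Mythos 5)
The paper does not prove this statement at all: it is imported verbatim from Holme (Theorem 3.4 of \cite{holme1988geometric}), so there is no in-paper argument to compare against. Your reconstruction is essentially the standard proof (the one in Holme and in Gelfand--Kapranov--Zelevinsky), and it is correct: you cut $Z(X)$ by $pr_2^{-1}(P')$ for a general linear $P'$, read off nonvanishing of $[Z(X)]\cdot v^m$ in $\mathbb{Z}[u,v]/(u^{n+1},v^{n+1})$ to locate $\dim(X^{\vee})$, and then identify the zero-dimensional case with $\deg(X^{\vee})$ times the class of a general fiber of $\pi_2$, using reflexivity of the conormal variety to see that this fiber is a linear $l$-plane of class $u^{n-l}v^{n}$. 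You have also correctly isolated the two genuinely delicate points: the linearity of the general contact locus (which is exactly where characteristic zero enters, via $Z(X)=Z(X^{\vee})$) and the multiplicity-one statement for the fibers over $X^{\vee}\cap P'$. Two small precisions would tighten the write-up. First, Kleiman's transversality theorem as usually stated needs a group acting transitively on the ambient space, and $\operatorname{PGL}_{n+1}$ acting through the second factor is not transitive on $\mathbb{P}^n\times(\mathbb{P}^n)^{\vee}$; the clean fix is to apply the theorem to the morphism $\pi_2\colon Z(X)\to(\mathbb{P}^n)^{\vee}$ and the translates $gP'$ (or to observe that for the dimension count one only needs properness of the intersection, since a proper nonempty intersection has class a positive combination of classes of nonempty subvarieties, hence nonzero). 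Second, for the equivalence ``$W_m\neq\emptyset$ iff $[W_m]\neq 0$'' the empty-implies-zero direction should be phrased via the refined intersection product being supported on $W_m$; with these glosses the argument is complete.
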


We apply the previous facts in our particular case. Let $v_{n,d}$ be the Veronese embedding defined in the introduction, and denote by $\widetilde{Q}$ the isotropic quadric $Q$ embedded in $\mathbb{P}^N$ via the map $v_{n,d}$, where $N+1={n+d\choose d}$. In particular, $\widetilde{Q}$ is smooth, hence we can apply the relations (\ref{formulas delta_j(X)}). Moreover, it is known that
\[
\widetilde{Q}^{\vee}=\overline{\left\{[f]\in(\mathbb{P}^N)^{\vee}\ |\ [f]\ \mbox{is tangent to $Q$ at some smooth point}\right\}}.
\]

\begin{lmm}\label{computation of invariant delta_0}
In the hypotheses above, $\delta_0(\widetilde{Q}) = 2\sum_{k=0}^{n-1}\alpha_kd^k$, where
\begin{equation}\label{degree isotropic quadric embedded via Veronese map}
\alpha_k \coloneqq (k+1)\sum_{j=0}^{n-1-k}{n+1\choose j}(-1)^j2^{n-1-k-j}.
\end{equation}
\end{lmm}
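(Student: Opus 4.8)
The plan is to compute $\delta_0(\widetilde{Q})$ directly from formula (\ref{formulas delta_j(X)}) with $j=0$ and $r=n$, which gives
\[
\delta_0(\widetilde{Q})=\sum_{k=0}^{n}(-1)^{n-k}(k+1)\deg\bigl(c_{n-k}(T\widetilde{Q})\bigr).
\]
So the whole computation reduces to finding the degrees of the Chern classes of the tangent bundle of $\widetilde{Q}\cong Q$, where $Q\subseteq\mathbb{P}^n$ is a smooth quadric, but with the hyperplane class replaced by $d$ times the hyperplane class of $\mathbb{P}^n$ (since $\widetilde{Q}$ is the image of $Q$ under $v_{n,d}$, whose pullback of $\mathcal{O}_{\mathbb{P}^N}(1)$ is $\mathcal{O}_{\mathbb{P}^n}(d)|_Q$). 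Concretely: if $h$ denotes the hyperplane class of $\mathbb{P}^n$ restricted to $Q$, then $\deg_{\widetilde{Q}}(\gamma)=d^{\dim\gamma}\cdot\deg_Q(\gamma)$ for any cycle class $\gamma$ on $Q$, so $\deg(c_{n-k}(T\widetilde{Q}))=d^{k}\deg_Q(c_{n-k}(TQ))$. This already explains the shape $\delta_0(\widetilde{Q})=2\sum_k\alpha_kd^k$ and isolates the combinatorial content into the numbers $\deg_Q(c_{n-k}(TQ))$.

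Next I would compute $c(TQ)$ via the exact sequence $0\to TQ\to T\mathbb{P}^n|_Q\to\mathcal{O}_Q(2)\to 0$, which yields
\[
c(TQ)=\frac{(1+h)^{n+1}}{1+2h}
\]
in $A^*(Q)$, where $h$ is the restriction of the hyperplane class and $\deg_Q(h^{n-1})=2$ (the degree of a quadric hypersurface in $\mathbb{P}^n$). Expanding $\frac{1}{1+2h}=\sum_{i\geq0}(-2)^ih^i$ and $(1+h)^{n+1}=\sum_m\binom{n+1}{m}h^m$, the coefficient of $h^{n-k}$ in $c(TQ)$ is $\sum_{j=0}^{n-k}\binom{n+1}{j}(-2)^{n-k-j}$, hence $\deg_Q(c_{n-k}(TQ))=2\sum_{j=0}^{n-k}\binom{n+1}{j}(-2)^{n-k-j}$. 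Substituting into the expression for $\delta_0(\widetilde{Q})$ gives
\[
\delta_0(\widetilde{Q})=2\sum_{k=0}^{n}(-1)^{n-k}(k+1)\,d^{k}\sum_{j=0}^{n-k}\binom{n+1}{j}(-2)^{n-k-j}.
\]
One then checks that the $k=n$ term vanishes: it equals $(n+1)d^n\binom{n+1}{0}(-2)^0=(n+1)d^n$ — wait, this does not vanish, so I would instead reindex carefully. Writing $(-1)^{n-k}(-2)^{n-k-j}=(-1)^{j}2^{n-k-j}\cdot(-1)^{n-k}(-1)^{n-k-j}$; cleaning signs, $(-1)^{n-k}(-2)^{n-k-j}=(-1)^{2(n-k)-j}2^{n-k-j}=(-1)^{j}2^{n-k-j}$, so the inner sum becomes $\sum_{j=0}^{n-k}\binom{n+1}{j}(-1)^j2^{n-k-j}$, matching the definition of $\alpha_k$ once we shift $k\to k$ and recognize that for the claimed formula the sum runs to $n-1$; the $k=n$ contribution indeed must be shown to be zero using $\sum_{j=0}^{0}\binom{n+1}{j}(-1)^j2^{-j}=1$ together with the sign factor $(-1)^{n-n}(n+1)d^n$, which does not obviously cancel — so the genuine content is to verify the top Chern class identity $\deg_Q(c_n(TQ))=0$, equivalently $\deg_Q(c_0(TQ))$ paired appropriately vanishes, which follows from $c_n(TQ)=\chi(Q)[\mathrm{pt}]$ only when $\dim Q$... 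Let me restate: $\dim\widetilde{Q}=n-1$ is the relevant dimension, $r=n-1$ not $n$, so formula (\ref{formulas delta_j(X)}) is a sum over $k=0,\ldots,n-1$, and the top Chern class appearing is $c_{n-1-k}(T\widetilde{Q})$ with $k$ from $0$ to $n-1$; with this correction the index ranges line up exactly with (\ref{degree isotropic quadric embedded via Veronese map}) and no spurious term arises.

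The main obstacle is purely bookkeeping: getting the dimension $r=n-1$ (not $n$) right throughout, tracking the sign $(-1)^{r-k}=(-1)^{n-1-k}$ correctly, and confirming that the binomial-coefficient manipulation $\deg_Q(c_m(TQ))=2\sum_{j=0}^{m}\binom{n+1}{j}(-2)^{m-j}$ combined with $\sum_{k}(-1)^{n-1-k}(k+1)d^k(\cdots)$ reproduces precisely $2\sum_{k=0}^{n-1}\alpha_kd^k$ with $\alpha_k$ as in (\ref{degree isotropic quadric embedded via Veronese map}). The geometric input ($c(TQ)=(1+h)^{n+1}/(1+2h)$, $\deg_Q h^{n-1}=2$, and $v_{n,d}^*\mathcal{O}(1)=\mathcal{O}(d)$ scaling degrees by $d^{\dim}$) is standard; everything else is a finite algebraic identity that I would verify by expanding both sides, or more slickly by observing that both $\delta_0(\widetilde{Q})$ and the claimed expression are determined by the generating-function identity obtained from substituting $h\mapsto $ the class of $\widetilde{Q}$ and extracting the coefficient of the point class.
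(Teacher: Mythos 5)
Your proposal is correct and follows essentially the same route as the paper: compute $c(TQ)=(1+h)^{n+1}/(1+2h)$ from the Euler/normal-bundle sequence, use $\deg_Q h^{n-1}=2$ and the Veronese rescaling $\deg(c_{n-1-k}(T\widetilde Q))=d^k\deg_Q(c_{n-1-k}(TQ))$, and plug into Holme's formula for $\delta_0$ with $r=\dim\widetilde Q=n-1$. The only hiccup — starting with $r=n$ — you caught and corrected yourself, after which the indices and signs match the paper's computation exactly.
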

\begin{proof}
First of all we compute the Chern polynomial of $TQ$:
\[
c(TQ) = \frac{(1+t)^{n+1}}{1+2t} = \sum_{i,j=0}^{n-1}{n+1\choose i}(-2)^jt^{i+j} = \sum_{s=0}^{n-1}\left(\sum_{i=0}^s{n+1\choose i}(-2)^{s-i}\right)t^s.
\]
Then we compute the polar class $\delta_0(\widetilde{Q})$ using (\ref{cond1}) with $r=n-1$ and taking into account that~$\widetilde{Q}~=~v_{n,d}(Q)$.
\begin{align*}
\delta_0(\widetilde{Q}) & = \sum_{k=0}^{n-1}(-1)^{n-1-k}(k+1)\deg(c_{n-1-k}(T\widetilde{Q}))\\
 & = \sum_{k=0}^{n-1}(-1)^{n-1-k}(k+1)\deg\left[\left(\sum_{j=0}^{n-1-k}{n+1\choose j}(-2)^{n-1-k-j}\right)t^{n-1-k}(dt)^{k}\right]\\
 & = \sum_{k=0}^{n-1}(k+1)\deg\left[\left(\sum_{j=0}^{n-1-k}{n+1\choose j}(-1)^j2^{n-1-k-j}\right)d^kt^{n-1}\right]\\
 & = 2\sum_{k=0}^{n-1}(k+1)\left[\sum_{j=0}^{n-1-k}{n+1\choose j}(-1)^j2^{n-1-k-j}\right]d^k.\qedhere
\end{align*}
%Setting $\alpha_k$ as in (\ref{degree isotropic quadric embedded via Veronese map}), we conclude the proof.
\end{proof}

In the following technical Lemma, we rewrite the polynomial $\varphi_n(d)$ defined in Proposition \ref{degrees of coefficients of charpoly} in a useful way for the sequel.

\begin{lmm}\label{computation alpha_k}
Let $\varphi_n(d)$ be the polynomial defined in Proposition \ref{degrees of coefficients of charpoly}. Then $\varphi_n(d)=(d-2)\sum_{k=0}^{n-1}\beta_{k}d^k$, where
\begin{equation}\label{eq: computation beta_k}
\beta_{k}\coloneqq(k+1)\sum_{l=0}^{n-1-k}{k+l+1\choose l}(-1)^{l}.
\end{equation}
\end{lmm}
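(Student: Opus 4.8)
The plan is to prove the identity $\varphi_n(d)=(d-2)\sum_{k=0}^{n-1}\beta_k d^k$ with $\beta_k$ as in \eqref{eq: computation beta_k} by a direct manipulation of the closed form $\varphi_n(d)=(n+1)(d-1)^n-N$, where $N=((d-1)^{n+1}-1)/(d-2)$ for $d\geq 3$. First I would write everything in the variable $e\coloneqq d-1$, so that $\varphi_n(d)=(n+1)e^n-(e^{n+1}-1)/(e-1)=(n+1)e^n-(1+e+\cdots+e^n)=\sum_{s=0}^{n}\bigl((n+1)-(s+1)\bigr)e^s=\sum_{s=0}^{n}(n-s)e^s$, a polynomial in $e$ with no constant term (the $s=n$ term vanishes). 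The first real task is then to expand each $e^s=(d-1)^s$ via the binomial theorem and re-collect by powers of $d$, obtaining $\varphi_n(d)=\sum_{k=0}^{n}\gamma_k d^k$ with $\gamma_k=\sum_{s=k}^{n}(n-s)\binom{s}{k}(-1)^{s-k}$.

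The heart of the argument is two bookkeeping facts. First, since $\varphi_n(2)=0$ (Proposition \ref{degrees of coefficients of charpoly}, $d=2$), and moreover $\varphi_n'(2)=0$ as well — which one sees from $\varphi_n(d)=\sum_{s=0}^n(n-s)(d-1)^s$ since the derivative at $d=2$ is $\sum_{s=1}^n s(n-s)$... actually this is nonzero, so instead one argues directly — the factor $(d-2)$ divides $\varphi_n(d)$. Cleaner: I would show $(d-2)\mid\varphi_n(d)$ as polynomials by exhibiting the quotient. Set $\psi_n(d)\coloneqq\sum_{k=0}^{n-1}\beta_k d^k$ with $\beta_k$ as given; the claim is $(d-2)\psi_n(d)=\varphi_n(d)$. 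Expanding the left side, the coefficient of $d^m$ in $(d-2)\psi_n(d)$ is $\beta_{m-1}-2\beta_m$ (with $\beta_{-1}=\beta_n=0$), so the identity reduces to checking, for each $m=0,\dots,n$,
\[
\beta_{m-1}-2\beta_m=\gamma_m=\sum_{s=m}^{n}(n-s)\binom{s}{m}(-1)^{s-m}.
\]
The second bookkeeping fact is the hockey-stick–type identity needed to match the binomials $\binom{k+l+1}{l}$ appearing in $\beta_k$ against the $\binom{s}{m}$ appearing in $\gamma_m$: substituting $s=k+l+1$ turns $\binom{s}{m}$ into $\binom{k+l+1}{m}$, and one uses $\sum_l\binom{k+l+1}{l}=\binom{k+l+2}{l}$-telescoping (Pascal) to collapse the alternating sums.

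I expect the main obstacle to be purely organizational: reconciling the three different index ranges — $s$ from $0$ to $n$ in $\varphi_n$, $l$ from $0$ to $n-1-k$ in $\beta_k$, and $k$ from $0$ to $n-1$ in $\psi_n$ — and making the telescoping in the alternating binomial sums come out exactly, including the edge cases $m=0$ and $m=n$. A slicker route that would sidestep the raw computation is to verify the identity by a generating-function argument: multiply $\varphi_n(d)=\sum_s(n-s)(d-1)^s$ by a suitable formal device, or observe that both sides are polynomials of degree $\leq n$ in $d$ and check agreement at $d=2,3,\dots,n+2$ (that many points), using the interpretation of $\beta_k$ and the known values of $N$ — but since the statement is an identity of polynomials I would likely just push through the coefficient comparison, citing Pascal's rule $\binom{a}{b}=\binom{a-1}{b-1}+\binom{a-1}{b}$ and the finite geometric-series identity as the only tools.
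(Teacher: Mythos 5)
There is a genuine gap, and it occurs at the very first step. You claim
\[
\varphi_n(d)=(n+1)e^n-(1+e+\cdots+e^n)=\sum_{s=0}^{n}\bigl((n+1)-(s+1)\bigr)e^s=\sum_{s=0}^{n}(n-s)e^s,\qquad e=d-1,
\]
but this rewriting is false: on the left the term $(n+1)$ multiplies only $e^n$, whereas on the right you have distributed $(n+1)$ over every power of $e$. The correct expansion is $\varphi_n(d)=ne^n-\sum_{s=0}^{n-1}e^s$. A concrete check: for $n=2$, $d=3$ (so $e=2$) one has $\varphi_2(3)=3\cdot 4-(1+2+4)=5$, while your formula gives $\sum_{s=0}^{2}(2-s)2^s=2+2+0=4$. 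Since your coefficients $\gamma_k=\sum_{s=k}^{n}(n-s)\binom{s}{k}(-1)^{s-k}$ are derived from this wrong expansion, the target identity $\beta_{m-1}-2\beta_m=\gamma_m$ that you propose to verify is not the right one, and the whole coefficient comparison would have to be redone from scratch. The same error also infects your aside about $\varphi_n'(2)$ (which is in any case irrelevant: you only need $\varphi_n(2)=0$ to conclude $(d-2)\mid\varphi_n(d)$).

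Beyond this, the decisive combinatorial step is never actually carried out: you reduce to checking $\beta_{m-1}-2\beta_m=\gamma_m$ for each $m$ and then gesture at ``hockey-stick--type'' telescoping, with a garbled identity in which the summation index $l$ appears on both sides. That verification is the entire content of the lemma, so leaving it as a sketch means the proof is not complete even setting aside the algebra error. For comparison, the paper avoids the division-by-$(d-2)$ bookkeeping altogether: it first establishes the factored form $\varphi_n(d)=(d-2)\sum_{k=0}^{n-1}(k+1)(d-1)^k$ (a one-line telescoping check against $ne^n-\sum_{s=0}^{n-1}e^s$), then reads off $\beta_k=\sum_{i=k}^{n-1}(i+1)\binom{i}{k}(-1)^{i-k}$ by binomially expanding $(d-1)^k$, and converts this to $(k+1)\sum_{l=0}^{n-1-k}\binom{k+l+1}{l}(-1)^l$ by a single application of Pascal's rule. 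If you repair your starting expansion and genuinely execute the coefficient identity, your route can be made to work, but as written it does not.
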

\begin{proof}
With a bit of work, the polynomial $\varphi_n(d)$ can be rewritten as
\[
\varphi_n(d) = (d-2)\sum_{k=0}^{n-1}(k+1)(d-1)^k = (d-2)\sum_{k=0}^{n-1}\beta_{k}d^k,
\]
where
\begin{align*}
\beta_{k} & = \sum_{i=k}^{n-1}(i+1){i\choose k}(-1)^{i-k}\\
 & = \sum_{l=0}^{n-1-k}(l+k+1){l+k\choose k}(-1)^{l}\\
% & = \sum_{l=0}^{n-1-k}(l+k+1)\frac{(l+k)!}{l!k!}(-1)^{l}\\
 & = \sum_{l=0}^{n-1-k}\left[\frac{l(l+k)!}{l!k!}+\frac{(k+1)(l+k)!}{l!k!}\right](-1)^{l}\\
% & = \sum_{l=0}^{n-1-k}\left[\frac{(k+1)(l+k)!}{(l-1)!(k+1)!}+\frac{(k+1)(l+k)!}{l!k!}\right](-1)^{l}\\
 & = (k+1)\sum_{l=0}^{n-1-k}\left[{k+l\choose k+1}+{k+l\choose k}\right](-1)^{l}\\
 & = (k+1)\sum_{l=0}^{n-1-k}{k+l+1\choose l}(-1)^{l}.\qedhere
\end{align*}
\end{proof}

Now we prove that the degree of the leading coefficient of $\psi_f$ is a multiple of the polar class $\delta_0(\widetilde{Q})$ computed in Lemma \ref{computation of invariant delta_0}.

\begin{prop}\label{identity alpha beta}
For any $n\geq 1$ and for any $k=0,\ldots,n-1$, $\alpha_{k}=\beta_{k}$. In particular,
\begin{equation}\label{eq: identity alpha beta}
\varphi_n(d)=\frac{d-2}{2}\delta_0(\widetilde{Q}).
\end{equation}
\end{prop}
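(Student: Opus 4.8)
The plan is to prove the two claims $\alpha_k=\beta_k$ for each $k$ and then derive (\ref{eq: identity alpha beta}) from the already-established formulas in Lemma \ref{computation of invariant delta_0} and Lemma \ref{computation alpha_k}. The last part is immediate: once $\alpha_k=\beta_k$ for all $k=0,\ldots,n-1$, we have
\[
\delta_0(\widetilde{Q})=2\sum_{k=0}^{n-1}\alpha_k d^k=2\sum_{k=0}^{n-1}\beta_k d^k=\frac{2}{d-2}\,\varphi_n(d),
\]
which rearranges to $\varphi_n(d)=\tfrac{d-2}{2}\delta_0(\widetilde{Q})$. So the whole content of the Proposition is the combinatorial identity
\[
(k+1)\sum_{j=0}^{n-1-k}\binom{n+1}{j}(-1)^j 2^{\,n-1-k-j}
\;=\;
(k+1)\sum_{l=0}^{n-1-k}\binom{k+l+1}{l}(-1)^l,
\]
i.e. after cancelling $k+1$ and setting $m\coloneqq n-1-k\ge 0$,
\[
\sum_{j=0}^{m}\binom{k+m+2}{j}(-1)^j 2^{\,m-j}\;=\;\sum_{l=0}^{m}\binom{k+l+1}{l}(-1)^l.
\]

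First I would reduce the left-hand side. Write $2^{m-j}=(1+1)^{m-j}=\sum_{i\ge 0}\binom{m-j}{i}$ and swap the order of summation; alternatively, and more cleanly, I would recognize $\sum_{j=0}^m\binom{M}{j}(-1)^j 2^{m-j}$ (with $M=k+m+2$) as a truncated binomial-type sum and simplify it using the hockey-stick / Vandermonde machinery. The cleanest route is probably generating functions: the left side is the coefficient of $t^m$ in $\frac{(1-t)^{M}}{1-2t}\cdot\frac{1}{(1-t)^{?}}$... more precisely, $\sum_{m\ge 0}\Bigl(\sum_{j=0}^m\binom{M}{j}(-1)^j 2^{m-j}\Bigr)t^m$ is the Cauchy product $\bigl(\sum_j\binom{M}{j}(-t)^j\bigr)\bigl(\sum_i(2t)^i\bigr)=\frac{(1-t)^M}{1-2t}$, but note $M=k+m+2$ depends on $m$, so this bookkeeping needs care — I would instead fix $k$ and treat both sides as functions of $m$, using that $M=k+m+2$ means the left side equals $[t^m]\frac{(1-t)^{k+2}}{(1-2t)}\cdot(1-t)^m$-type expressions only after a shift; so the honest approach is to prove the identity by induction on $m$ (equivalently on $n$ for fixed $k$), since both sides satisfy a transparent first-order recurrence.

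Concretely, for the inductive step I would check that the right-hand side $R_m\coloneqq\sum_{l=0}^m\binom{k+l+1}{l}(-1)^l$ satisfies $R_m-R_{m-1}=\binom{k+m+1}{m}(-1)^m$, and that the left-hand side $L_m\coloneqq\sum_{j=0}^m\binom{k+m+2}{j}(-1)^j 2^{m-j}$ satisfies $L_m=2L_{m-1}+(-1)^m\binom{k+m+2}{m}+\bigl[\text{correction from }\binom{k+m+2}{j}-\binom{k+m+1}{j}\bigr]$, using Pascal's rule $\binom{k+m+2}{j}=\binom{k+m+1}{j}+\binom{k+m+1}{j-1}$ to relate the $m$-term to the $(m-1)$-term sums; after telescoping, the two recurrences match and the base case $m=0$ gives $L_0=1=R_0$. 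The main obstacle is purely bookkeeping: the upper binomial index $k+m+2$ on the left moves with $m$, so the recurrence for $L_m$ is not simply $L_m=2L_{m-1}+(\text{new term})$ but carries an extra sum coming from incrementing the top index, and one must verify that this extra sum collapses to exactly $(-1)^m\binom{k+m+1}{m}$ minus what is already accounted for. An alternative that sidesteps the moving index entirely is to substitute $\binom{n+1}{j}=\binom{n}{j}+\binom{n}{j-1}$ on the left of the original identity and induct on $n$ directly, comparing the $n$ and $n-1$ instances term by term; I would present whichever of these turns out shorter, but in either case the proof is a finite induction with elementary binomial manipulations, and no deeper input is needed.
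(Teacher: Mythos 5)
Your proposal is correct and takes essentially the same route as the paper: cancel the common factor $k+1$, reduce to the binomial identity, and prove it by induction on $n$ using Pascal's rule $\binom{n+2}{j}=\binom{n+1}{j}+\binom{n+1}{j-1}$ on the left-hand side (the "alternative that sidesteps the moving index" you mention at the end is exactly the paper's argument). The verification you defer does work out: both sides increase by $(-1)^{n-k}\binom{n+1}{n-k}$ at each step, i.e.\ in your notation the clean recurrence is $L_m=L_{m-1}+(-1)^m\binom{k+m+1}{m}=R_m-R_{m-1}+L_{m-1}$ with $L_0=R_0=1$.
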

\begin{proof}
From the identities (\ref{degree isotropic quadric embedded via Veronese map}) and (\ref{eq: computation beta_k}) we see that both $\alpha_{k}$ and $\beta_{k}$ are multiples of $k+1$. In particular, we have to prove that
\begin{equation}\label{equality alpha_k=beta_k}
\sum_{j=0}^{n-1-k}{n+1\choose j}(-1)^j2^{n-1-k-j}=\sum_{j=0}^{n-1-k}{k+j+1\choose j}(-1)^{j}.
\end{equation}
The proof is by induction on $n$. If $n=1$, both the sides of the equality are equal to 1. Suppose now that the equality is true at the $n$-th step. At the $(n+1)$-th step, the right-hand side of the equality is
\[
\sum_{j=0}^{n-k}{k+j+1\choose j}(-1)^{j}=\sum_{j=0}^{n-1-k}{k+j+1\choose j}(-1)^j+(-1)^{n-k}{n+1\choose n-k},
\]
while the left-hand side at the $(n+1)$-th step is equal to
\begin{align*}
\sum_{j=0}^{n-k}{n+2\choose j}(-1)^j2^{n-k-j} & = 2^{n-k}+\sum_{j=1}^{n-k}{n+2\choose j}(-1)^j2^{n-k-j}\\
 & = 2^{n-k}+\sum_{j=1}^{n-k}\left[{n+1\choose j}+{n+1\choose j-1}\right](-1)^j2^{n-k-j}\\
 & = \sum_{j=0}^{n-k}{n+1\choose j}(-1)^j2^{n-k-j}+\sum_{j=1}^{n-k}{n+1\choose j-1}(-1)^j2^{n-k-j}\\
 & = 2\sum_{j=0}^{n-1-k}{n+1\choose j}(-1)^j2^{n-1-k-j}+(-1)^{n-k}{n+1\choose n-k}+\\
 & \ \ \ \ +\sum_{j=0}^{n-1-k}{n+1\choose j}(-1)^{j+1}2^{n-1-k-j}\\
 & = \sum_{j=0}^{n-1-k}{n+1\choose j}(-1)^j2^{n-1-k-j}+(-1)^{n-k}{n+1\choose n-k}
\end{align*}
By inductive case we prove (\ref{equality alpha_k=beta_k}).
\qedhere
\end{proof}

\begin{rmk}\label{rmk: Matteo Gallet}
Matteo Gallet suggested an alternative proof of the identity (\ref{equality alpha_k=beta_k}), applying the so-called \emph{``Zeilberger's Algorithm''} (see \cite{zeilberger1990holonomic,zeilberger1991telescoping}). For example, using the Mathematica package \verb+HolonomicFunctions+, developed by Cristoph Koutschan (see \cite{koutshan2010telescoping}), the code
{\small
\begin{verbatim}
Annihilator[Sum[Binomial[n+1,j]*(-1)^j*2^(n-1-k-j),{j,0,n-1-k}],{S[k],S[n]}]
Annihilator[Sum[Binomial[k+j+1,j]*(-1)^j,{j,0,n-1-k}],{S[k],S[n]}]
\end{verbatim}
}
\noindent provides the operators that annihilate the left-hand and right-hand side in (\ref{equality alpha_k=beta_k}), respectively, thus showing that (\ref{equality alpha_k=beta_k}) holds true.
\end{rmk}

\begin{cor}\label{cor: Q tilde is a hypersurface}
Consider the isotropic quadric $Q\subseteq\mathbb{P}^n$ and its Veronese embedding $\widetilde{Q}\subseteq\mathbb{P}^N$ with the same notations as before. Then $\widetilde{Q}^{\vee}$ is a hypersurface of $(\mathbb{P}^n)^{\vee}$ of degree $\deg(\widetilde{Q}^{\vee})=\delta_0(\widetilde{Q})$.
\end{cor}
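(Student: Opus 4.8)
The plan is to obtain the statement as a direct application of Theorem \ref{dimension dual of X} to $X=\widetilde{Q}\subseteq\mathbb{P}^N$. Since $Q$ is a smooth quadric hypersurface of $\mathbb{P}^n$, the Veronese image $\widetilde{Q}$ is a smooth irreducible variety of dimension $r=n-1$; its conormal variety $Z(\widetilde{Q})\subseteq\mathbb{P}^N\times(\mathbb{P}^N)^{\vee}$ therefore has dimension $N-1$, and $[Z(\widetilde{Q})]=\sum_{j=0}^{n-2}\delta_j(\widetilde{Q})\,u^{N-j}v^{j+1}$ with all $\delta_j(\widetilde{Q})\geq 0$. By Theorem \ref{dimension dual of X}, everything comes down to the single nonvanishing $\delta_0(\widetilde{Q})\neq 0$: once this is known, the theorem applies with $l=0$ and gives at once $\dim(\widetilde{Q}^{\vee})=N-1$, i.e.\ $\widetilde{Q}^{\vee}$ is a hypersurface of $(\mathbb{P}^N)^{\vee}$, together with $\deg(\widetilde{Q}^{\vee})=\delta_0(\widetilde{Q})$.

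So I would just combine the three results already proved. The identities $\varphi_n(d)=\tfrac{d-2}{2}\,\delta_0(\widetilde{Q})$ of Proposition \ref{identity alpha beta} and $\varphi_n(d)=(d-2)\sum_{k=0}^{n-1}(k+1)(d-1)^k$ from the proof of Lemma \ref{computation alpha_k} are identities of polynomials in $d$, and comparing them (or, concretely, combining Lemma \ref{computation of invariant delta_0}, Proposition \ref{identity alpha beta} and Lemma \ref{computation alpha_k}) yields
\[
\delta_0(\widetilde{Q})=2\sum_{k=0}^{n-1}(k+1)(d-1)^k\geq 2\sum_{k=0}^{n-1}(k+1)=n(n+1)>0
\]
for all $n\geq 1$ and $d\geq 2$, because each factor $(d-1)^k$ is at least $1$. (For $d\geq 3$ one can replace this by the even softer remark that $\delta_0(\widetilde{Q})=\tfrac{2}{d-2}\varphi_n(d)$ is positive since $\varphi_n(d)>0$ for $d>2$ by Remark \ref{rmk: on the polynomial phi(n,d)}.) Plugging $\delta_0(\widetilde{Q})\neq 0$ into the previous paragraph finishes the argument.

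I do not expect any genuine obstacle here: all the substance is already contained in Lemmas \ref{computation of invariant delta_0} and \ref{computation alpha_k} and in Proposition \ref{identity alpha beta}, and the only point that deserves a line of care is the degenerate case $n=1$. There $r=0$, the expansion of $[Z(\widetilde{Q})]$ ranges over the empty set of indices, and $\widetilde{Q}$ is a pair of points in $\mathbb{P}^N$; one checks directly that $\widetilde{Q}^{\vee}$ is the union of the two hyperplanes through these points, a hypersurface of degree $2=\delta_0(\widetilde{Q})$, in accordance with reading $\delta_0$ off the Chern-class formula (\ref{formulas delta_j(X)}) with $r=j=0$. For $n\geq 2$ no such comment is needed and the argument above is complete.
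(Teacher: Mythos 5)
Your proof is correct and follows essentially the same route as the paper: both reduce the statement to the positivity of $\delta_0(\widetilde{Q})$, established via Lemmas \ref{computation of invariant delta_0}, \ref{computation alpha_k} and Proposition \ref{identity alpha beta}, and then invoke Theorem \ref{dimension dual of X} with $l=0$. The only (harmless) difference is that you extract the closed form $\delta_0(\widetilde{Q})=2\sum_{k=0}^{n-1}(k+1)(d-1)^k$ to see positivity directly (which even covers $d=2$), whereas the paper deduces positivity for $d>2$ from $\delta_0(\widetilde{Q})=\tfrac{2}{d-2}\varphi_n(d)$ together with Remark \ref{rmk: on the polynomial phi(n,d)} --- an alternative you yourself point out.
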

\begin{proof}
From Remark \ref{rmk: on the polynomial phi(n,d)} and Proposition \ref{identity alpha beta} we have that $\delta_0(\widetilde{Q})$ is a positive integer for all $n\geq 1$ and $d>2$. Applying Theorem \ref{dimension dual of X} we conclude the proof.
\end{proof}

Summing up, there is an explicit formula for the degree of the leading coefficient of $\psi_f$ in terms of the degree of the dual variety of $Q$ embedded in $\mathbb{P}^N$ via the Veronese map, stated in the following corollary.

\begin{cor}\label{exponent power}
Given $f\in\operatorname{Sym}^dV$, if $f$ is general then
\[
deg(c_{N})=\frac{d-2}{2}\deg(\widetilde{Q}^{\vee})
\]
when $d$ is even, while
\[
deg(c_{2N})=(d-2)\deg(\widetilde{Q}^{\vee})
\]
when $d$ is odd.
\end{cor}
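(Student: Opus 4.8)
The plan is simply to assemble three facts that have already been established. First, Proposition~\ref{degrees of coefficients of charpoly} gives the degree of the leading coefficient of $\psi_f$ directly in terms of the polynomial $\varphi_n(d)$: when $d$ is even one has $\deg(c_{N})=\varphi_n(d)$, and when $d$ is odd one has $\deg(c_{2N})=2\varphi_n(d)$. So the only thing left to do is to re-express $\varphi_n(d)$ in terms of $\deg(\widetilde{Q}^{\vee})$.

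Second, Proposition~\ref{identity alpha beta} provides precisely this link at the level of the polar class, namely $\varphi_n(d)=\tfrac{d-2}{2}\,\delta_0(\widetilde{Q})$. Third, Corollary~\ref{cor: Q tilde is a hypersurface} identifies $\delta_0(\widetilde{Q})$ with $\deg(\widetilde{Q}^{\vee})$ for $d>2$, since in that range $\widetilde{Q}^{\vee}$ is a hypersurface (this is where Theorem~\ref{dimension dual of X} together with the positivity of $\delta_0(\widetilde{Q})$, recorded via Remark~\ref{rmk: on the polynomial phi(n,d)}, comes in). Substituting, I would conclude: for $d$ even, $\deg(c_{N})=\varphi_n(d)=\tfrac{d-2}{2}\,\delta_0(\widetilde{Q})=\tfrac{d-2}{2}\,\deg(\widetilde{Q}^{\vee})$; and for $d$ odd, $\deg(c_{2N})=2\varphi_n(d)=(d-2)\,\delta_0(\widetilde{Q})=(d-2)\,\deg(\widetilde{Q}^{\vee})$.

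The only point requiring a separate word is the boundary case $d=2$: there $\varphi_n(2)=0$ by Proposition~\ref{degrees of coefficients of charpoly}, so the left-hand sides vanish, and since the factor $\tfrac{d-2}{2}$ vanishes as well the right-hand sides vanish too, so both identities hold trivially (and $\widetilde{Q}^{\vee}$ need not even be a hypersurface in this degenerate situation). I do not expect any genuine obstacle here: all the substantive work — the Chern-class computation of $\delta_0(\widetilde{Q})$ in Lemma~\ref{computation of invariant delta_0}, the rewriting of $\varphi_n(d)$ in Lemma~\ref{computation alpha_k}, and the inductive comparison of the two combinatorial sums in Proposition~\ref{identity alpha beta} — has already been done, so this corollary amounts to repackaging those results in terms of the geometrically meaningful quantity $\deg(\widetilde{Q}^{\vee})$.
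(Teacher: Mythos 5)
Your proposal is correct and is exactly the argument the paper intends: the corollary is stated as a "summing up" of Proposition~\ref{degrees of coefficients of charpoly}, Proposition~\ref{identity alpha beta}, and Corollary~\ref{cor: Q tilde is a hypersurface}, which is precisely the chain of substitutions you perform. Your extra remark on the degenerate case $d=2$ is a harmless (and reasonable) addition not spelled out in the paper.
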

In the following, we prove that the leading coefficient of $\psi_f$ is a power of the discriminant $\Delta_{\widetilde{Q}}(f)$, where the exponent has been obtained in Corollary \ref{exponent power}. The next two lemmas will clarify the geometrical meaning of the vanishing of the polynomial $c_{N}$ (respectively $c_{2N}$).

\begin{lmm}\label{lemma 7.1 cinesi}
Assume that $d>2$ and let $f\in\operatorname{Sym}^d V$. Then the leading coefficient of $\psi_f$ vanishes if and only if the system
\begin{equation}\label{deficit system}
\left\{ 
\begin{array}{l}
\frac{1}{d}\nabla f(x) = \lambda x\\
\|x\| = 0\ ,
\end{array}
\right.
\end{equation}
called \emph{deficit system} in \cite{li2012characteristic}, has a nontrivial solution.
\end{lmm}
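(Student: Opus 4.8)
The plan is to relate the vanishing of the leading coefficient $c_N$ (resp. $c_{2N}$) to the existence of a solution of the deficit system by analyzing what happens to the resultant defining $\psi_f$ as $\lambda\to\infty$. The leading coefficient of $\psi_f$ vanishes precisely when the ``top-degree part'' of the resultant, viewed as a polynomial in $\lambda$, degenerates; by the geometric characterization of the resultant in Proposition \ref{def resultant}$(i)$, this degeneration is equivalent to the appearance of a solution of the relevant homogeneous system ``at $\lambda=\infty$''. The standard device for making this precise is to homogenize $\lambda$: introduce a new variable $\mu$ and consider the bihomogenization of $F_\lambda$ (resp. $G_\lambda$) in which $\lambda$ is replaced by $\lambda/\mu$ after clearing denominators. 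Then $c_N$ (resp. $c_{2N}$) is, up to a nonzero constant, the value of the resultant of this bihomogenized system at $\mu=0$, and it vanishes iff that system at $\mu=0$ has a projective solution.

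First I would treat the even case. Writing $F_\lambda(x)=\tfrac1d\nabla f(x)-\lambda\|x\|^{d-2}x$, the coefficient of the top power $\lambda^N$ in $\operatorname{Res}(F_\lambda)$ is (up to sign) the resultant of the ``leading'' system obtained by the appropriate bihomogenization; concretely, substituting $\lambda=s/t$, clearing $t$, and setting $t=0$, the system $F_\lambda=0$ degenerates into the system $\|x\|^{d-2}x=0$ — but this is too crude, since $\|x\|^{d-2}x=0$ has solutions independently of $f$. The correct statement comes from the \emph{mixed} top-degree behaviour: one must track, via Proposition \ref{degrees of coefficients of charpoly}, that $c_N$ has degree $(n+1)(d-1)^n-N=\varphi_n(d)$ in the coefficients of $f$, which is strictly positive for $d>2$ by Remark \ref{rmk: on the polynomial phi(n,d)}; thus $c_N$ genuinely depends on $f$, and the vanishing locus is a proper subvariety. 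The cleanest route is to show directly that $c_N=0$ iff the affine system $\{\,\tfrac1d\nabla f(x)-\lambda\|x\|^{d-2}x=0\,\}$ has a solution $(x,\lambda)$ with $x\neq 0$ that does not come from a finite E-eigenpair, i.e. one must have $\|x\|=0$ (for if $\|x\|\neq 0$ one may normalize and obtain a genuine E-eigenpair, which contributes a finite root, not a drop in degree). Setting $\|x\|=0$ in $F_\lambda=0$ yields exactly the deficit system $\tfrac1d\nabla f(x)=\lambda x$, $\|x\|=0$.

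Next I would handle the odd case in parallel, using $G_\lambda(x_0,x)=(x_0^2-\|x\|^2,\ \tfrac1d\nabla f(x)-\lambda x_0^{d-2}x)$. Homogenizing $\lambda$ as before and passing to the fibre over $\lambda=\infty$, a projective solution at infinity forces $x_0^{d-2}x=0$ together with $x_0^2=\|x\|^2$; if $x_0\neq 0$ then $x=0$ whence $\|x\|=0$ contradicting $x_0^2=\|x\|^2$ and $x_0\ne 0$, so necessarily $x_0=0$, hence $\|x\|=0$, and then $\tfrac1d\nabla f(x)=\lambda x$ with $x\ne 0$. Conversely, any nontrivial solution $(x,\lambda)$ of the deficit system yields the point $(x_0,x)=(0,x)$ solving $G_\infty=0$, forcing $c_{2N}$ to vanish. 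The precise bookkeeping — that passing to $\lambda=\infty$ in the resultant really picks out $c_N$ (resp. $c_{2N}$) rather than a lower coefficient — is where I expect to invoke Proposition \ref{degrees of coefficients of charpoly} (to know the exact degree in $\lambda$) together with a dimension/irreducibility argument for the resultant of the bihomogenized family.

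\textbf{Main obstacle.} The delicate point is making rigorous the passage ``$\lambda\to\infty$ in $\operatorname{Res}$'': one needs that the resultant of the bihomogenized system $(F_\lambda$ or $G_\lambda)$ — now a polynomial in the bihomogeneous coordinates $[s:t]$ of $\lambda$-space and in the coefficients of $f$ — specializes at $t=0$ to (a nonzero multiple of) $c_N$ (resp. $c_{2N}$), and that this specialized resultant vanishes exactly on the locus where the fibre $t=0$ acquires a projective zero. This is a standard but nontrivial fact about resultants under specialization of a parameter (continuity of the resultant and the elimination-theoretic description of its vanishing); I would reduce it to Proposition \ref{def resultant}$(i)$ applied to an enlarged system that explicitly incorporates $\|x\|^2=0$, so that the ``point at infinity'' is captured honestly. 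Once this specialization statement is in place, the equivalence with solvability of the deficit system is immediate from the two directions sketched above, and the proof closes.
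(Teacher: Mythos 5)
There is a genuine gap, concentrated in the step where you pass from the vanishing of the leading coefficient to the deficit system. Your claim that ``setting $\|x\|=0$ in $F_\lambda=0$ yields exactly the deficit system'' is false for $d>2$: since $F_\lambda(x)=\frac1d\nabla f(x)-\lambda\|x\|^{d-2}x$, the condition $\|x\|=0$ annihilates the term $\lambda\|x\|^{d-2}x$ and leaves $\nabla f(x)=0$, $\|x\|=0$, which is the \emph{irregularity} condition of Definition \ref{irregular tensor}, not the system (\ref{deficit system}). An isotropic eigenvector with $\lambda\neq 0$ is not a solution of $F_\lambda(x)=0$ for any finite $\lambda$, so it cannot be detected as ``a solution of the affine system not coming from a finite E-eigenpair''; its effect on the leading coefficient is indirect. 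The same problem sinks the $\lambda\to\infty$ specialization in both parities: after writing $\lambda=s/t$ and clearing $t$, the term involving $\nabla f$ acquires a factor of $t$, so the fibre over $t=0$ is a system independent of $f$ (namely $\|x\|^{d-2}x=0$, respectively $x_0^{d-2}x=0$ together with $x_0^2=\|x\|^2$), which always has projective solutions; its resultant is identically zero and computes nothing. This is precisely why $\deg_\lambda\psi_f\leq N$ is far below the naive bound $(n+1)(d-1)^n$ and why $c_N$ is not the resultant of any naive limit system. You notice this difficulty in the even case, but your remedy is the incorrect substitution above, and the odd case repeats the flawed specialization (the conclusion ``and then $\frac1d\nabla f(x)=\lambda x$'' does not follow, since $\nabla f$ has disappeared from the fibre at $t=0$).

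The paper's argument avoids homogenizing $\lambda$ altogether. It first disposes of irregular $f$ (then $\psi_f\equiv0$ by Proposition \ref{zero charpoly}, and the deficit system has a nontrivial solution with $\lambda=0$). For regular $f$ it combines Theorem \ref{eigenvalues as roots of charpoly} (the roots of $\psi_f$ are exactly the E-eigenvalues) with Theorem \ref{expected number of E-eigenvalues}: a general $f$ has $N$ (respectively $N$ pairs of) E-eigenvalues, so $\psi_f$ attains the maximal degree generically, and the degree drops exactly when some of the $N$ projective eigenvectors fail to produce a root of $\psi_f$; the eigenvectors that fail to do so are precisely those lying on $\|x\|=0$, i.e.\ the nontrivial solutions of (\ref{deficit system}). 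If you want to repair your write-up, the counting statement --- isotropic plus non-isotropic eigenvectors total $N$ when finite in number --- is the ingredient you must supply in place of the specialization argument.
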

\begin{proof}
If $f$ is irregular, from Definition \ref{irregular tensor} we have that the system (\ref{deficit system}) has a non trivial solution when $\lambda=0$, while from Proposition \ref{zero charpoly} we have that $\psi_f$ is identically zero.

Suppose instead that $f$ is regular. By Proposition \ref{eigenvalues as roots of charpoly} the roots of $\psi_f$ are exactly the E-eigenvalues of~$f$ and for $d$ even $\deg(\psi_f)\leq N$, whereas for $d$ odd $\deg(\psi_f)\leq 2N$. However, we know by Theorem \ref{expected number of E-eigenvalues} that a general $f$ has $N$ distinct E-eigenvalues when $d$ is even, and $N$ pairs $(\lambda,-\lambda)$ of distinct E-eigenvalues when $d$ is odd, which means that $\psi_f$ would have exactly $N$ distinct roots when $d$ is even, and $2N$ distinct roots when $d$ is odd. On the other hand, E-eigenvalues are the normalized solutions $x$ of equation (\ref{new interpretation eigenvectors}), and by definition $\psi_f$ is the resultant of the homogeneization of the system whose equations are (\ref{new interpretation eigenvectors}) and the condition $\|x\|=1$. The solutions at infinity of this system are precisely the solution of the system~(\ref{deficit system}). Hence a symmetric tensor $f$ such that $\psi_f$ has not the maximal degree provides a nontrivial solution of the system (\ref{deficit system}), or equivalently admits an isotropic eigenvector.  
\end{proof}
\begin{lmm}\label{lemma 7.2 cinesi}
Given $f\in\operatorname{Sym}^dV$, the system (\ref{deficit system}) has a nontrivial solution if and only if the coefficients of $f$ annihilate the polynomial $\Delta_{\widetilde{Q}}(f)$, namely $f$ is represented by a point of $\widetilde{Q}^{\vee}$.
\end{lmm}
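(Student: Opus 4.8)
The plan is to identify a nontrivial solution of the deficit system (\ref{deficit system}) directly with a geometric incidence condition on the Veronese image $\widetilde{Q}$, and then invoke the definition of the $\widetilde{Q}$-discriminant via the conormal variety. Fix a nonzero $x\in V^{\mathbb{C}}$ with $\|x\|=0$, so that $P\coloneqq[x]\in Q$, and write $\widetilde{P}\coloneqq v_{n,d}(P)=[x^d]\in\widetilde{Q}\subseteq\mathbb{P}^N$. The key observation is that the linear form on $\operatorname{Sym}^dV^{\mathbb{C}}$ given by pairing against $f$ corresponds, under the identification of $(\mathbb{P}^N)^\vee$ with $\mathbb{P}(\operatorname{Sym}^dV^{\mathbb{C}})$, to the point $[f]$; and the affine tangent space to $\widetilde{Q}$ at $\widetilde{P}$ is spanned by $x^d$ together with the vectors $x^{d-1}\cdot y$ for $y$ ranging over the affine tangent space $T_xQ=\{y\mid\langle x,y\rangle=0\}$ to the quadric cone $\widehat{Q}$ at $x$. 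Therefore $[f]$ contains $T_{\widetilde{P}}\widetilde{Q}$ if and only if $f$ annihilates $x^d$ and all $x^{d-1}y$ with $\langle x,y\rangle=0$, i.e. $f(x)=0$ and $\langle\nabla f(x),y\rangle=0$ for all $y\perp x$. The latter says $\nabla f(x)$ is proportional to $x$ (using that the bilinear form is nondegenerate on $V^{\mathbb{C}}$), i.e. $\frac1d\nabla f(x)=\lambda x$ for some $\lambda\in\mathbb{C}$; and by Euler's identity $f(x)=\langle\frac1d\nabla f(x),x\rangle=\lambda\|x\|^2=0$ is then automatic, so the condition $f(x)=0$ is redundant.

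So the first step is to make this tangent-space computation precise: compute $T_{\widetilde{P}}\widetilde{Q}$ as an affine subspace of $\operatorname{Sym}^dV^{\mathbb{C}}$, either by differentiating a local parametrization of $Q$ composed with $v_{n,d}$, or by the standard fact that the affine cone over the tangent space of a Veronese-embedded variety $v_{n,d}(X)$ at $v_{n,d}(P)$ is $\{x^{d-1}\cdot y : y\in \widehat{T_PX}\}$ (the span of $x^{d-1}y$ over the affine tangent cone of $X$ at $x$). The second step is the linear-algebra translation just sketched: $[f]\supseteq T_{\widetilde{P}}\widetilde{Q}$ as hyperplane containing a linear subspace becomes a system of linear equations in $f$, which rearranges exactly to (\ref{deficit system}) with the first equation having $x$ as a nontrivial isotropic solution. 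The third step is to run the equivalence through the conormal variety $Z(\widetilde{Q})$ and Definition \ref{dual variety}: a point $[f]\in(\mathbb{P}^N)^\vee$ lies in $\widetilde{Q}^\vee$ iff it lies in the closure of the set of hyperplanes tangent to $\widetilde{Q}$ at a smooth point, and since $\widetilde{Q}$ is smooth (every point of $Q$ is smooth on $Q$, and $v_{n,d}$ is an embedding), tangency at a smooth point is exactly the incidence condition above for some nonzero isotropic $x$. Then, because $\widetilde{Q}^\vee$ is a hypersurface by Corollary \ref{cor: Q tilde is a hypersurface}, membership of $[f]$ in $\widetilde{Q}^\vee$ is precisely the vanishing of its defining polynomial $\Delta_{\widetilde{Q}}(f)$.

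One technical point worth handling carefully: the definition of $\widetilde{Q}^\vee$ involves a \emph{Zariski closure}, so a priori one only gets that a nontrivial solution of (\ref{deficit system}) forces $[f]$ into the set of \emph{genuinely} tangent hyperplanes, which is contained in $\widetilde{Q}^\vee$, giving one implication immediately; for the converse one must argue that the tangent locus is already dense in $\widetilde{Q}^\vee$ — but this is automatic since $\pi_2\colon Z(\widetilde{Q})\to\widetilde{Q}^\vee$ is dominant and the fiber over a general point of $\widetilde{Q}^\vee$ consists of honest tangency pairs, and one then checks (using that the vanishing of $\Delta_{\widetilde{Q}}(f)$ is a closed condition while the existence of a nontrivial solution of (\ref{deficit system}) is also closed, being the projection of a projective incidence variety) that the two closed sets coincide because they agree on a dense open set. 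I expect the main obstacle to be exactly this bookkeeping about closures versus the naive tangency condition, together with verifying that no spurious isotropic solutions with $\lambda=0$ and $\nabla f(x)=0$ (the irregular case) escape the correspondence — but the irregular case is covered since there $x$ is still a nontrivial solution of (\ref{deficit system}) and one checks $[f]$ still represents a point of $\widetilde{Q}^\vee$ (the hyperplane contains the tangent space at the corresponding singular-on-$[f]$ but smooth-on-$\widetilde{Q}$ point), consistently with Lemma \ref{lemma 7.1 cinesi} and Proposition \ref{zero charpoly}. The rest of the argument is routine linear algebra.
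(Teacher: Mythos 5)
Your proposal is correct and takes essentially the same route as the paper: a nontrivial solution of the deficit system is identified with tangency of $[f]$ to $Q$ (equivalently, containment of $T_{\widetilde{P}}\widetilde{Q}$ in the hyperplane determined by $f$), after which one invokes the definition of $\widetilde{Q}^{\vee}$ and of $\Delta_{\widetilde{Q}}$. You in fact supply more detail than the paper's two-line argument, notably the explicit tangent-space computation for the Veronese embedding, the closure bookkeeping (harmless here, since $\widetilde{Q}$ is smooth and the incidence variety is already closed), and the irregular case, which the paper's proof silently sets aside by assuming regularity.
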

\begin{proof}
Suppose that $x$ is a solution of (\ref{deficit system}). By regularity of $f$ we have that $\lambda\neq 0$. Moreover, $P=[x]$ is a smooth point of $f$, and $f$ is tangent to $Q$ at $P$. This means that $f$, thought as a point of $\mathbb{P}(\operatorname{Sym}^dV)^{\vee}$, belongs to $\widetilde{Q}^{\vee}$, namely its coefficients annihilate the polynomial $\Delta_{\widetilde{Q}}(f)$. The converse is true by reversing the implications.\qedhere
\end{proof}

\begin{rmk}\label{rmk: on transversality}
One could ask if the condition on $f$ to have the maximum number of E-eigenvalues imposed in the Main Theorem has a geometric counterpart. For example, this condition is not the same as requiring $[f]$ to be regular: although any symmetric tensor $f$ having the maximum number of E-eigenvalues is necessarily regular, there exist regular symmetric tensors $f$ admitting at least one isotropic eigenvector. The right property to consider is revealed by Lemma \ref{lemma 7.2 cinesi}, which shows that $f\in\operatorname{Sym}^d V$ admits an isotropic eigenvector if and only if the hypersurface $[f]$ and the isotropic quadric $Q$ are tangent. This means that the condition on $f$ in the Main Theorem is satisfied if and only if $[f]$ is transversal to $Q$.
\end{rmk}

Remark \ref{rmk: on transversality} is even more interesting when considering the following result (see \cite[Claim 3.2]{aluffi00}):

\begin{prop}\label{prop: aluffi}
If two smooth hypersurfaces of degree $d_1$, $d_2$ in projective space are tangent along a positive dimensional set, then $d_1=d_2$.
\end{prop}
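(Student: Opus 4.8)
The plan is to reformulate the tangency hypothesis in terms of the polar maps (Gauss maps) of $X_1$ and $X_2$, and then to read off $d_1=d_2$ from a degree count on a curve contained in the tangency locus. Write $X_i=V(f_i)\subseteq\mathbb{P}^n$ with $f_i$ homogeneous of degree $d_i$. First I would record that, since $X_i$ is smooth, the partials $\partial_0f_i,\ldots,\partial_nf_i$ have no common zero in $\mathbb{P}^n$: a common zero $x$ would satisfy $f_i(x)=0$ by Euler's identity $\sum_jx_j\partial_jf_i=d_if_i$ (here one uses $\operatorname{char}=0$, as throughout the paper), hence would be a singular point of $X_i$, which is excluded. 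Consequently the polar map
\[
\phi_i\colon\mathbb{P}^n\longrightarrow(\mathbb{P}^n)^{\vee},\qquad\phi_i(x)=[\nabla f_i(x)],
\]
is a morphism defined on all of $\mathbb{P}^n$, given by forms of degree $d_i-1$, so that $\phi_i^{*}\mathcal{O}_{(\mathbb{P}^n)^{\vee}}(1)\cong\mathcal{O}_{\mathbb{P}^n}(d_i-1)$; moreover, for $x\in X_i$ the point $\phi_i(x)$ is exactly the tangent hyperplane $T_xX_i$, so that $\phi_i$ restricted to $X_i$ is its Gauss map.

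Next I would translate the hypothesis: to say that $X_1$ and $X_2$ are tangent along $Y$ means $Y\subseteq X_1\cap X_2$ and $T_pX_1=T_pX_2$ for every $p\in Y$, which by the previous paragraph is the same as saying that $\phi_1$ and $\phi_2$ restrict to the same morphism on $Y$. Since $\dim Y\geq1$, I would choose an integral curve $\Gamma\subseteq Y$ — for instance an irreducible component of $Y\cap L$ for a general linear subspace $L$ of codimension $\dim Y-1$ — so that $\deg\Gamma\geq1$. Pulling back $\mathcal{O}(1)$ along the common morphism $\phi_1|_{\Gamma}=\phi_2|_{\Gamma}$ gives
\[
\mathcal{O}_{\mathbb{P}^n}(d_1-1)|_{\Gamma}\;\cong\;(\phi_1|_{\Gamma})^{*}\mathcal{O}(1)\;\cong\;(\phi_2|_{\Gamma})^{*}\mathcal{O}(1)\;\cong\;\mathcal{O}_{\mathbb{P}^n}(d_2-1)|_{\Gamma}
\]
in $\operatorname{Pic}(\Gamma)$; taking degrees yields $(d_1-1)\deg\Gamma=(d_2-1)\deg\Gamma$, and since $\deg\Gamma>0$ we conclude $d_1=d_2$.

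The argument is short and I do not anticipate a real obstacle; the one point that genuinely requires care — and the only place where smoothness of $X_1$ and $X_2$ enters — is the assertion that the polar maps are everywhere-defined morphisms, which is precisely why the statement fails once that hypothesis is dropped. A slightly more conceptual phrasing of the last step avoids the curve altogether: the tangency forces $\mathcal{O}_{\mathbb{P}^n}(d_1-d_2)|_{Y}\cong\mathcal{O}_{Y}$, so $\mathcal{O}_{\mathbb{P}^n}(1)|_{Y}$ is a torsion class in $\operatorname{Pic}(Y)$; but an ample line bundle on a positive-dimensional projective variety is never torsion, so $d_1=d_2$. Passing to an explicit curve $\Gamma$ merely makes this last implication completely elementary.
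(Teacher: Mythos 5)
Your proof is correct. Note that the paper itself offers no argument for this proposition: it is imported verbatim from Aluffi (Claim 3.2 of the cited reference), so there is no in-paper proof to measure yours against, and supplying a self-contained one is genuinely useful. Your route is the standard one and it works: smoothness of $X_i=V(f_i)$ plus Euler's identity (in characteristic zero) makes the partials of $f_i$ base-point-free, so the polar map $\phi_i=[\nabla f_i]$ is a morphism on all of $\mathbb{P}^n$ with $\phi_i^{*}\mathcal{O}(1)\cong\mathcal{O}_{\mathbb{P}^n}(d_i-1)$; tangency along $Y$ says exactly that $\phi_1$ and $\phi_2$ agree pointwise on $Y$, hence as morphisms once $Y$ is taken with its reduced structure (a small point worth making explicit); and restricting to an integral curve $\Gamma\subseteq Y$ and taking degrees gives $(d_1-1)\deg\Gamma=(d_2-1)\deg\Gamma$ with $\deg\Gamma>0$. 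The two places where care is genuinely needed — that the polar maps are everywhere-defined morphisms (the only use of smoothness, and the reason the statement fails without it) and that the degree count is performed on an honest curve — are both handled. Your closing reformulation, that tangency forces $\mathcal{O}_{\mathbb{P}^n}(d_1-d_2)|_{Y}\cong\mathcal{O}_Y$ while an ample line bundle on a positive-dimensional projective variety is never torsion in $\operatorname{Pic}$, is the cleanest way to state the mechanism and is in the same spirit as Aluffi's original argument.
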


An immediate consequence of Proposition \ref{prop: aluffi} is the following

\begin{cor}\label{cor: smooth symmetric tensor}
Given $f\in\operatorname{Sym}^d V$ with $d>2$, if $[f]$ is smooth then $f$ has always a finite number of isotropic eigenvectors.
\end{cor}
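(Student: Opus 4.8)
The plan is to combine the geometric description of isotropic eigenvectors recalled in Section~\ref{section: characteristic polynomial} with Proposition~\ref{prop: aluffi}. First I would observe that the set of isotropic eigenvectors of $f$ is the zero locus in $V^{\mathbb{C}}\setminus\{0\}$ of the condition $\operatorname{rank}\begin{pmatrix}\nabla f(x)\\ x\end{pmatrix}\le 1$ together with $\|x\|=0$; this is a Zariski-closed cone, so its image $S\subseteq\mathbb{P}^n$ is a closed subvariety, and the statement ``$f$ has finitely many isotropic eigenvectors'' is equivalent to $\dim S\le 0$. Arguing by contradiction, I would assume $\dim S\ge 1$, i.e.\ that $S$ contains a positive-dimensional irreducible component.

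The next step is to identify $S$ with a tangency locus. As recalled in Section~\ref{section: characteristic polynomial}, every $P=[x]\in S$ lies on $[f]\cap Q$, and either $P$ is a singular point of $[f]$, or $P$ is a smooth point of $[f]$ at which $[f]$ is tangent to $Q$. Since $[f]$ is smooth by hypothesis, the first case never occurs, so $[f]$ is tangent to $Q$ at every point of $S$. Because the quadratic form $q(x)=\|x\|^2$ is nondegenerate, $Q$ is a smooth quadric (indeed $\nabla q=2x$ vanishes only at the origin); hence for every $P\in S$ both tangent hyperplanes $T_P[f]$ and $T_PQ$ are defined and they coincide. Thus $[f]$ and $Q$ are two smooth hypersurfaces of $\mathbb{P}^n$ which are tangent along the positive-dimensional set $S$.

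Finally, Proposition~\ref{prop: aluffi} yields $\deg[f]=\deg Q$, i.e.\ $d=2$, contradicting the hypothesis $d>2$; hence $\dim S\le 0$, which is the assertion. (When $n=1$ there is nothing to prove, since $[f]\cap Q$ is automatically finite.) I expect the only point requiring care to be the dictionary ``isotropic eigenvector $\Leftrightarrow$ point of tangency of $[f]$ with $Q$'', which is already essentially established in the discussion preceding this corollary together with Lemma~\ref{lemma 7.2 cinesi}; once it is in place, the argument is immediate from Proposition~\ref{prop: aluffi}.
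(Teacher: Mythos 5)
Your argument is correct and is exactly the one the paper intends: the paper states the corollary as an immediate consequence of Proposition~\ref{prop: aluffi}, and your proof simply makes explicit the dictionary between isotropic eigenvectors of a smooth $[f]$ and points of tangency of $[f]$ with the smooth quadric $Q$, then applies Aluffi's result to rule out a positive-dimensional tangency locus when $d>2$. No gaps; this matches the paper's (unwritten) proof.
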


A detailed example of a symmetric tensor $f$ admitting an isotropic eigenvector, with a study of the tangency of the variety defined by $f$  with the isotropic quadric $Q$, is given in Section \ref{ex: plane cubic isotropic eigenvector}.

Returning to the proof of the Main Theorem, an immediate consequence of Corollary \ref{exponent power} and Lemmas \ref{lemma 7.1 cinesi} and \ref{lemma 7.2 cinesi} is the following formula for the leading coefficient of the E-characteristic polynomial of a symmetric tensor.

\begin{thm}\label{thm: leading coefficient}
Given $f\in\operatorname{Sym}^dV$ and $d>2$, if $f$ does not admit isotropic eigenvectors, then
\begin{equation}\label{eq: leading coefficient, even case}
c_{N}=e\cdot\Delta_{\widetilde{Q}}(f)^{\frac{d-2}{2}}
\end{equation}
when $d$ is even, while
\begin{equation}\label{eq: leading coefficient, odd case}
c_{2N}=e\cdot\Delta_{\widetilde{Q}}(f)^{d-2}
\end{equation}
when $d$ is odd, for some integer constant $e=e(n,d)$.
\end{thm}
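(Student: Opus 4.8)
The plan is to identify the leading coefficient $c_N$ (resp. $c_{2N}$) of $\psi_f$ as a homogeneous polynomial in the coefficients of $f$ and match it with a power of $\Delta_{\widetilde{Q}}(f)$ by a three-step argument: identifying the zero loci, deducing a factorization from irreducibility, and pinning the exponent via a degree count. First I would treat the even case. By Lemma \ref{lemma 7.1 cinesi} the polynomial $c_N$ vanishes at $f$ exactly when the deficit system (\ref{deficit system}) has a nontrivial solution, and by Lemma \ref{lemma 7.2 cinesi} this happens exactly when $f$ lies on the hypersurface $\widetilde{Q}^{\vee}$, i.e.\ when $\Delta_{\widetilde{Q}}(f)=0$. (We have already assumed $d>2$, so by Remark \ref{rmk: on the polynomial phi(n,d)} the coefficient $c_N$ has positive degree and is not a nonzero constant, and by Corollary \ref{cor: Q tilde is a hypersurface} the dual variety $\widetilde{Q}^{\vee}$ genuinely is a hypersurface so that $\Delta_{\widetilde{Q}}(f)$ is a nonconstant polynomial.) Hence the two hypersurfaces $\{c_N=0\}$ and $\{\Delta_{\widetilde{Q}}(f)=0\}$ in $\mathbb{P}(\operatorname{Sym}^dV)^{\vee}$ coincide.

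Next I would invoke irreducibility. Since $X=\widetilde{Q}$ is an irreducible variety whose dual is a hypersurface, the $X$-discriminant $\Delta_{\widetilde{Q}}(f)$ is an irreducible polynomial over $\mathbb{C}$ (as recalled in Section \ref{section: preliminaries}). Because $c_N$ and $\Delta_{\widetilde{Q}}(f)$ have the same zero locus and $\Delta_{\widetilde{Q}}(f)$ is irreducible, $c_N$ must be (up to a nonzero integer constant $e$) a positive power of $\Delta_{\widetilde{Q}}(f)$, say $c_N=e\cdot\Delta_{\widetilde{Q}}(f)^{m}$. The exponent $m$ is then forced by comparing degrees: by Corollary \ref{exponent power} we have $\deg(c_N)=\frac{d-2}{2}\deg(\widetilde{Q}^{\vee})$, and by Corollary \ref{cor: Q tilde is a hypersurface} we have $\deg\Delta_{\widetilde{Q}}(f)=\deg(\widetilde{Q}^{\vee})=\delta_0(\widetilde{Q})$, so $m\deg(\widetilde{Q}^{\vee})=\frac{d-2}{2}\deg(\widetilde{Q}^{\vee})$, giving $m=\frac{d-2}{2}$ and the identity (\ref{eq: leading coefficient, even case}). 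The odd case is identical in structure: $\psi_f$ then has only even powers of $\lambda$ by (\ref{rmk on writing of charpoly, odd case}), the leading coefficient $c_{2N}$ again vanishes exactly on $\widetilde{Q}^{\vee}$ by Lemmas \ref{lemma 7.1 cinesi} and \ref{lemma 7.2 cinesi}, so $c_{2N}=e\cdot\Delta_{\widetilde{Q}}(f)^m$, and now $\deg(c_{2N})=(d-2)\deg(\widetilde{Q}^{\vee})$ by Corollary \ref{exponent power} forces $m=d-2$, yielding (\ref{eq: leading coefficient, odd case}).

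The main obstacle is making the ``same zero locus plus irreducibility implies power'' step airtight. Two subtleties need care. First, one must know that the equality of zero loci is as subschemes or at least that $c_N$ has no extra irreducible factors supported off $\widetilde{Q}^{\vee}$; this follows because Lemma \ref{lemma 7.1 cinesi} gives a genuine biconditional ($c_N(f)=0 \iff$ deficit system solvable) rather than just one inclusion, so $V(c_N)=\widetilde{Q}^{\vee}$ exactly and every irreducible component of $V(c_N)$ equals $\widetilde{Q}^{\vee}$. Second, one must rule out $c_N\equiv 0$: this is precisely where the hypothesis ``$f$ does not admit isotropic eigenvectors'' enters through the degree formulas of Corollary \ref{exponent power}, which presuppose $\deg(\psi_f)=N$ (resp.\ $2N$) and hence that $c_N$ (resp.\ $c_{2N}$) is a nonzero polynomial of the stated positive degree; combined with $\deg(\widetilde{Q}^{\vee})>0$ from Corollary \ref{cor: Q tilde is a hypersurface}, the exponent $m$ comes out positive and well-defined. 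Everything else is routine bookkeeping, and the constant $e=e(n,d)$ is left unspecified here since it will be determined (up to sign) later by specializing to scaled Fermat polynomials via Lemma \ref{scaled Fermat polynomials}.
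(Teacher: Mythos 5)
Your overall strategy is exactly the paper's: use Lemmas \ref{lemma 7.1 cinesi} and \ref{lemma 7.2 cinesi} to identify $\{c_N=0\}$ with $\widetilde{Q}^{\vee}$, invoke irreducibility of $\Delta_{\widetilde{Q}}$ to get $c_N=e\cdot\Delta_{\widetilde{Q}}(f)^m$, and pin down $m$ by the degree count of Corollary \ref{exponent power}. However, there is a genuine gap in the irreducibility step: you assert that $\widetilde{Q}$ is an irreducible variety, but this fails for $n=1$. The isotropic quadric $Q=\{x_1^2+x_2^2=0\}\subseteq\mathbb{P}^1$ is a pair of points, so $\widetilde{Q}$ is the union of the two points $[(x_1\pm\sqrt{-1}x_2)^d]$, its dual $\widetilde{Q}^{\vee}$ is a union of two hyperplanes $\{b_0=0\}\cup\{b_d=0\}$, and $\Delta_{\widetilde{Q}}(f)=b_0b_d$ is \emph{reducible}. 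Your argument then only yields $c_N=e\cdot b_0^i b_d^j$ with no control on $i-j$, and the degree count alone only fixes $i+j$, not the individual exponents; so the conclusion $c_N=e\cdot\Delta_{\widetilde{Q}}(f)^{(d-2)/2}$ does not follow from what you wrote when $n=1$. The paper is aware of this and explicitly postpones the $n=1$ case to Section \ref{ex: binary forms}, where the missing ingredient is supplied: since $\psi_f$ has integer (in particular real) coefficients while $b_0$ and $b_d$ are complex-conjugate linear forms in the $a_j$, one must have $i=j$, and only then does the degree count give $i=j=(d-2)/2$.

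For $n>1$ your argument is sound: the smooth quadric $Q$ of dimension $\geq 1$ is irreducible, hence so are $\widetilde{Q}$ and $\widetilde{Q}^{\vee}$, and Corollary \ref{cor: Q tilde is a hypersurface} guarantees $\widetilde{Q}^{\vee}$ is a hypersurface, so the ``same zero locus plus irreducibility'' step and the degree comparison go through as you describe. To make the proof complete you need to either restrict the irreducibility argument to $n>1$ and handle $n=1$ separately (as the paper does), or add the conjugation/reality argument that forces the two factors of $\Delta_{\widetilde{Q}}$ to occur with equal multiplicity in $c_N$.
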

\begin{proof} Applying Lemma \ref{lemma 7.1 cinesi} and Lemma \ref{lemma 7.2 cinesi} we obtain that the varieties $\{c_{N}=0\}$ and $\{\Delta_{\widetilde{Q}}(f)=0\}$ coincide. The proof for the case $n=1$ is postponed to Section \ref{ex: binary forms}, where we treat more in detail binary forms. If $n>1$, then $\widetilde{Q}$ is an irreducible hypersurface and the variety $\widetilde{Q}^{\vee}$ is irreducible as well. Corollary \ref{cor: Q tilde is a hypersurface} tells us that $\widetilde{Q}^{\vee}$ is in fact a hypersurface. Hence, for $d$ even, $c_{N}=e\cdot\Delta_{\widetilde{Q}}(f)^j$, whereas for $d$ odd $c_{2N}=e\cdot\Delta_{\widetilde{Q}}(f)^k$ for some integer constant $e=e(n,d)$ and positive integers $j,k$. Moreover, from Corollary \ref{exponent power} we have that $j=(d-2)/2$ and $k=d-2$.\qedhere

%In order to determine the constants $\zeta_{n,d}$ explicitly, we consider as example the class of scaled Fermat polynomials introduced in Lemma \ref{scaled Fermat polynomials}. We know that the product of the E-eigenvalues is equal (up to sign as explained in the introduction) to $\frac{c_0(n,d)}{c_{h(n,d)}(n,d)}$ for $d$ even, and is equal to $\frac{c_0(n,d)}{c_{2h(n,d)}(n,d)}$ for $d$ odd. Furthermore, from Lemma \ref{scaled Fermat polynomials} the numerator in (\ref{product_eigenvalues}) is equal to $c_0(n,d)$ when $d$ is even, and is equal to $c_0(n,d)^{\frac{1}{2}}$ when $d$ is odd. This means that the denominator of equation (\ref{product_eigenvalues}) coincides with the leading coefficient of $\psi_f$. By our definition, $\Delta_{\widetilde{Q}}(f)$ is irreducible over $\mathbb{Z}$. This fact, together with the fact that the leading term of the denominator in (\ref{product_eigenvalues}) is monic, tells us that $\zeta_{n,d}=1$ for all $n$ and $d$. This means that the product of the E-eigenvalues of a symmetric tensor has the form given in equation (\ref{product eigenvalues of the theorem}).
\end{proof}

\begin{proof}[Proof of the Main Theorem]
Theorems \ref{thm: constant term of charpoly} and \ref{thm: leading coefficient} describe respectively the constant term $c_0$ and the leading coefficient $c_N$ (or $c_{2N}$) of the E-characteristic polynomial $\psi_f$ of a generic symmetric tensor $f$, up to a constant integer factor. Moreover, the product of the E-eigenvalues of $f$ is $c_0/c_N$ (respectively $c_0/c_{2N}$). If we restrict to the class of scaled Fermat polynomials, as in Lemma \ref{scaled Fermat polynomials}, we notice that the integers $c$ and $e$ of Theorems \ref{thm: constant term of charpoly} and \ref{thm: leading coefficient} have to coincide, for the leading term of the denominator in (\ref{product_eigenvalues}) is monic and by definition $\Delta_{\widetilde{Q}}(f)$ has relatively prime integer coefficients. This concludes the proof.\qedhere
\end{proof}
\section{Examples with binary and ternary symmetric tensors} \label{section: examples}

In this section we give two examples to understand better the statement and the proof of the Main Theorem. The first one deals with the case of binary forms: in particular, we show that in this particular case equation (\ref{product eigenvalues of the theorem}) can be rewritten more explicitly. The second is an example of a cubic ternary form $f$ which admits one isotropic eigenvector: we compute explicitly its E-characteristic polynomial $\psi_f$, observe that $\deg(\psi_f)<N$ and visualize its tangency with the isotropic quadric $Q$.

\subsection{The case of binary forms}\label{ex: binary forms}

In this example we focus on the case $n=1$ and recover the results of Li, Qi and Zhang in \cite{li2012characteristic}. An element of $\operatorname{Sym}^d\mathbb{C}^2$ is represented by the binary form
\begin{equation}\label{eq: definition of f}
f(x_1,x_2)=\sum_{j=0}^d\binom{d}{j}a_jx_1^{d-j}x_2^j\ ,\quad a_0,\ldots,a_d\in\mathbb{C}.
\end{equation}
According to Theorem \ref{expected number of E-eigenvalues}, a general binary form $f$ of degree $d$ admits $N=d$ E-eigenvectors. As one can easily see from relation (\ref{new interpretation eigenvectors}), the E-eigenvectors of $f$ are the normalized solutions $(x_1,x_2)$ of the equation $D(f)=0$, where the \emph{discriminant operator} $D$ is defined by $D(f)\coloneqq x_1(\partial f/\partial x_2)-x_2(\partial f/\partial x_1)$. The operator $D$ is well-known and its properties are collected in \cite{maccioni1972number}.

We are interested in the E-characteristic polynomial $\psi_f$ of a regular binary form $f$. We know that $\deg(\psi_f)=d$ in the even case, while $\deg(\psi_f)=2d$ in the odd case. A remarkable formula for the leading coefficient of the E-characteristic polynomial of a $2$-dimensional tensor of order $d$ is given in \cite{li2012characteristic}. We show that this formula can be simplified a lot in the symmetric case.

Following the argument used in \cite{li2012characteristic}, the isotropic eigenvectors of $f$ are the solutions of the following simplified version of the system (\ref{deficit system}):
\begin{equation}\label{simplified deficit system}
\left\{ 
\begin{array}{lll}
\sum_{j=1}^d{d-1\choose j-1}a_{j-1}x_1^{d-j}x_2^{j-1} & = & \lambda x_1\\
\sum_{j=1}^d{d-1\choose j-1}a_{j}x_1^{d-j}x_2^{j-1} & = & \lambda x_2\\
x_1^2+x_2^2 & = & 0\ .
\end{array}
\right.
\end{equation}
We observe that all the non trivial solutions $(x_1,x_2)$ of (\ref{simplified deficit system}) are non-zero multiples of $(1,\sqrt{-1})$ or $(1,-\sqrt{-1})$. Substituting $(1,\sqrt{-1})$ to (\ref{simplified deficit system}) and eliminating $\lambda$ we obtain the condition
\begin{equation}\label{cond1}
\sum_{j=0}^d{d\choose j}a_{j}\sqrt{-1}^{j}=0.
\end{equation}
In the same manner, considering instead the vector $(1,-\sqrt{-1})$ we obtain the condition
\begin{equation}\label{cond2}
\sum_{j=0}^d{d\choose j}a_{j}(-\sqrt{-1})^{j}=0. 
\end{equation}
Therefore, if the binary form $f$ has at least one isotropic eigenvector, then the product of the left-hand sides of equations (\ref{cond1}) and (\ref{cond2}) vanishes. On the other hand, if this product is zero, then $(1,\sqrt{-1})$ or $(1,-\sqrt{-1})$ is a solution of the system (\ref{simplified deficit system}) and is in turn an isotropic eigenvector of $f$.

We observe that the left-hand sides in (\ref{cond1}) and (\ref{cond2}) have an interesting interpretation. Consider in general the linear change of coordinates defined by the equations
\[
x_1=\gamma_{11}z_1+\gamma_{12}z_2,\quad x_2=\gamma_{21}z_1+\gamma_{22}z_2.
\]
Applying this change of coordinates, the binary form $f(x_1,x_2)$ is transformed into the binary form $\widetilde{f}(z_1,z_2)$ in the new variables $z_1$, $z_2$ defined by
\[
\widetilde{f}(z_1,z_2)=\sum_{j=0}^d\binom{d}{j}a_j(\gamma_{11}z_1+\gamma_{12}z_2)^{d-j}(\gamma_{21}z_1+\gamma_{22}z_2)^j=\sum_{j=0}^d\binom{d}{j}\widetilde{a}_jz_1^{d-j}z_2^j,
\]
where (see \cite[Proposition 3.6.1]{sturmfels2008algorithms})
\begin{equation}\label{b_i's}
\widetilde{a}_j=\sum_{k=0}^d\left[\sum_{l=\max(0,k-j)}^{\min(k,d-j)}\binom{d-j}{l}\binom{j}{k-l}\gamma_{11}^l\gamma_{12}^{k-l}\gamma_{21}^{d-j-l}\gamma_{22}^{j-k+l}\right]a_k,\quad j=0,\ldots,d.
\end{equation}
In particular consider the new coordinates
\[
z_1=-\frac{\sqrt{-1}}{2}(x_1+\sqrt{-1}\hspace{0.5mm}x_2),\quad z_2=-\frac{\sqrt{-1}}{2}(x_1-\sqrt{-1}\hspace{0.5mm}x_2).
\]
The inverse change of coordinates has equations
\[
x_1=\sqrt{-1}(z_1+z_2),\quad x_2=z_1-z_2.
\]
With this choice, applying formula (\ref{b_i's}) the coefficients $\widetilde{a}_j$ of the transformed binary form $\widetilde{f}(z_1,z_2)$ are
\[
\widetilde{a}_j=\sum_{k=0}^d\left[\sum_{l=\max(0,k-j)}^{\min(k,d-j)}\binom{d-j}{l}\binom{j}{k-l}\sqrt{-1}^{2(j+l)-k}\right]a_k,\quad j=0,\ldots,d.
\]
In particular the extreme coefficients become
\[
\widetilde{a}_0=\sum_{j=0}^d{d\choose j}a_{j}\sqrt{-1}^{j},\quad \widetilde{a}_d=(-1)^d\sum_{j=0}^d{d\choose j}a_{j}(-\sqrt{-1})^{j}.
\]
Therefore, if we define $b_0\coloneqq\widetilde{a}_0$ and $b_d\coloneqq(-1)^d\widetilde{a}_d$, then the left-hand sides of equations (\ref{cond1}) and (\ref{cond2}) are equal to $b_0$ and $b_d$, respectively. Moreover, we observe that the product $b_0b_d$ has integer coefficients even though some of the coefficients of $b_0$ and $b_d$ have non-zero imaginary part: in fact we see that
\begin{equation}\label{product b_0b_d}
b_0b_d = \sum_{j,k=0}^d{d\choose j}{d\choose k}a_ja_k(-1)^j\sqrt{-1}^{j+k} = \sum_{s=0}^d\left[\sum_{j=0}^s{d\choose j}{d\choose s-j}a_ja_{s-j}(-1)^j\right]\sqrt{-1}^s,
\end{equation}
where in the last relation all summands corresponding to odd indices $s$ vanish. Since the coefficient of $a_0$ in the expression of $b_0b_d$ is 1, we conclude that $b_0b_d=\Delta_{\widetilde{Q}}(f)$ up to sign. In particular $\widetilde{Q}^{\vee}=\{b_0b_d=0\}$: in fact in this case $\widetilde{Q}$ is the union of two distinct points (more precisely, the classes of the rank one symmetric tensors $(x_1+\sqrt{-1}x_2)^d$ and $(x_1-\sqrt{-1}x_2)^d$), while the variety $\widetilde{Q}^{\vee}$ is the quadric union of the hyperplanes $\{b_0=0\}$, $\{b_d=0\}$. In particular, the hyperplane $\{b_0=0\}$ parametrizes the binary forms having $(1,\sqrt{-1})$ as isotropic eigenvector, while $\{b_d=0\}$ parametrizes the binary forms having $(1,-\sqrt{-1})$ as isotropic eigenvector.

Regarding the leading coefficient of the E-characteristic polynomial $\psi_f$, the previous argument suggests that it must coincide with $c\cdot b_0^ib_d^j$ for some $c=c(d)\in\mathbb{Z}$. Since $\psi_f$ is a polynomial in the indeterminates $a_0,\ldots,a_d$ with integer coefficients, it follows that $i=j$. Hence, for $d$ even, $c_d=e\cdot\Delta_{\widetilde{Q}}(f)^p$, whereas for $d$ odd $c_{2d}=e\cdot\Delta_{\widetilde{Q}}(f)^q$ for some $e=e(n,d)\in\mathbb{Z}$ and positive integers $p,q$. From Corollary \ref{exponent power} we have that $p=(d-2)/2$ and $q=d-2$, thus completing the proof of Theorem \ref{thm: leading coefficient} in the case $n=1$.

\begin{rmk}
If we specialize to the class of scaled Fermat binary forms $f(x_1,x_2)=\alpha x_1^d+\beta x_2^d$, $\alpha,\beta\in\mathbb{C}$, from relation (\ref{product b_0b_d}) we confirm the statement of Lemma \ref{scaled Fermat polynomials} by observing that
\[
\Delta_{\widetilde{Q}}(f)=\alpha^2+(1+(-1)^d)\sqrt{-1}^d\alpha\beta+\beta^2.
\]
\end{rmk}

\subsection{A plane cubic admitting an isotropic eigenvector}\label{ex: plane cubic isotropic eigenvector}
The following example has the goal to explain better Lemma \ref{lemma 7.2 cinesi}. First of all, we recall that, due to Theorem \ref{expected number of E-eigenvalues}, a general ternary form has $N=d^2-d+1$ E-eigenvalues.
Consider the cubic ternary form
\begin{align*}
f(x_1,x_2,x_3) &= 342\sqrt{-1}\hspace{0.5mm}{x}_{1}^{3}-522\sqrt{-1}\hspace{0.5mm}{x}_{1} {x}_{2}^{2}-389\sqrt{-1}\hspace{0.5mm}{x}_{1}^{2} {x}_{3}+79\sqrt{-1}\hspace{0.5mm}{x}_{2}^{2} {x}_{3}-474\sqrt{-1}\hspace{0.5mm}{x}_{1} {x}_{3}^{2}+\\
 & \quad+95\sqrt{-1}\hspace{0.5mm}{x}_{3}^{3} -773 {x}_{1}^{2} {x}_{2}+191 {x}_{2}^{3}-48 {x}_{1} {x}_{2} {x}_{3}+175 {x}_{2} {x}_{3}^{2}.
\end{align*}
It can be easily verified that the vector $x=(0,1,-\sqrt{-1})$ is an isotropic eigenvector of $f$. In particular the projective curve $[f]$ is tangent to the isotropic quadric $Q$ at $[x]\in\mathbb{P}^2$, and the common tangent line has equation $x_2-\sqrt{-1}\hspace{0.5mm}x_3=0$.
In order to represent graphically this situation, we consider the change of coordinates
\[
z_1=-\sqrt{-1}\hspace{0.5mm}x_1,\quad z_2=x_2+\sqrt{-1}\hspace{0.5mm}x_3,\quad z_3=x_2-\sqrt{-1}\hspace{0.5mm}x_3. 
\]
In the $z_i$'s the quadric $Q$ (the red curve in the affine representation of Figure \ref{fig: cubic_isotropic_eigenvector}) has equation $z_1^2-z_2z_3=0$. The image of the isotropic eigenvector $x$ is $z=(0,2,0)$, while the image of the projective curve $[f]$ (the blue curve in Figure \ref{fig: cubic_isotropic_eigenvector}) is the projective curve of equation
\[
g(z_1,z_2,z_3)=342 {z}_{1}^{3}+581 {z}_{1}^{2} {z}_{2}+192 {z}_{1}^{2} {z}_{3}+498 {z}_{1} {z}_{2} {z}_{3}+139 {z}_{2}^{2} {z}_{3}+24 {z}_{1} {z}_{3}^{2}+48 {z}_{2} {z}_{3}^{2}+4 {z}_{3}^{3}.
\]
\begin{figure}[h!]
  \begin{center}
    \includegraphics[scale=0.085]{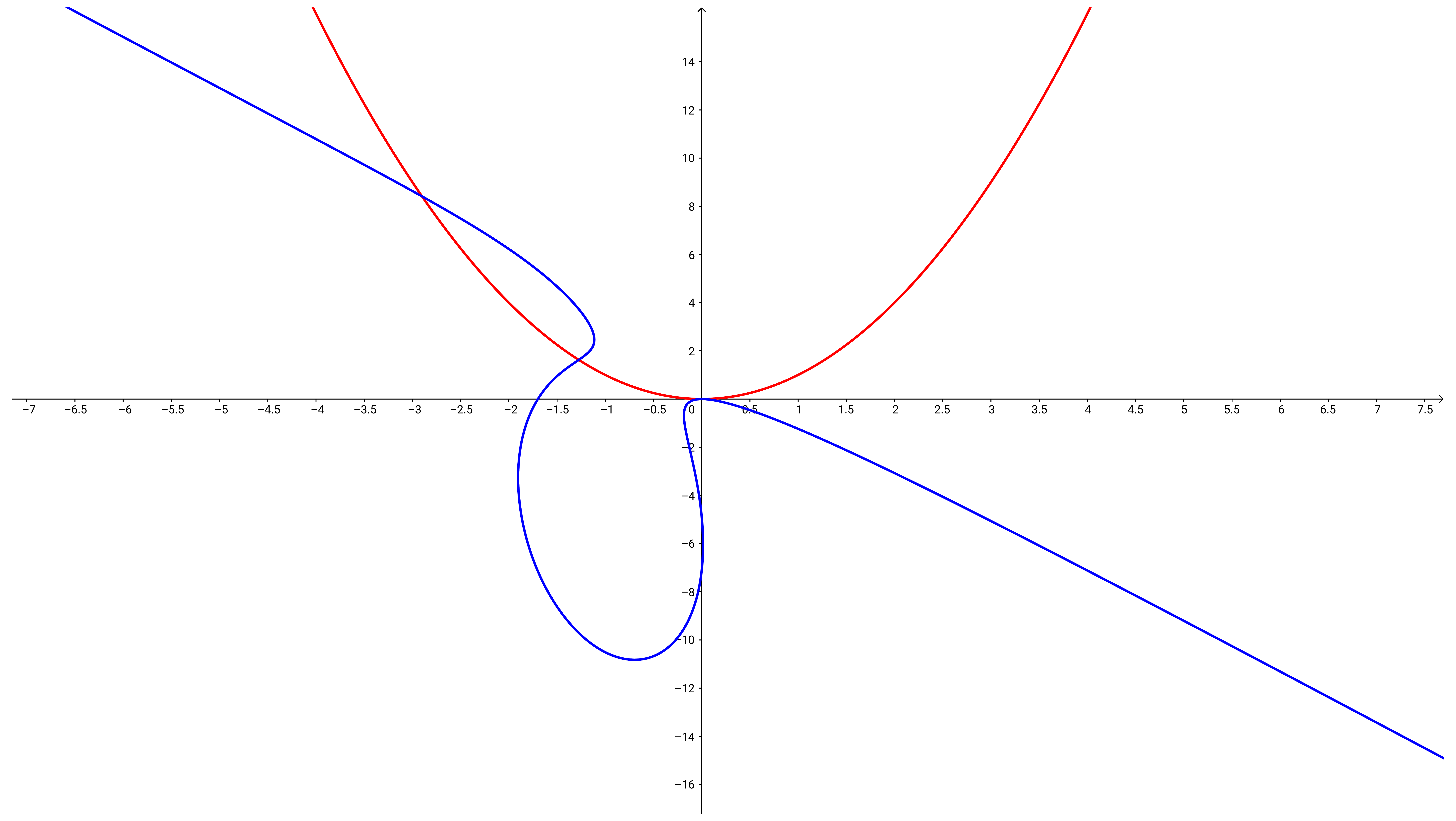}\\
    \caption{The isotropic quadric $Q$ and the ternary cubic $f$ in the affine plane $z_2=2$. They are not transversal at the origin.}
    \label{fig: cubic_isotropic_eigenvector}
  \end{center}
\end{figure}

\noindent The presence of an isotropic eigenvector can be detected by computing explicitly the E-characteristic polynomial of $f$ as well.
In order to compute $\psi_f(\lambda)$ we used the following Macaulay2 code \cite{graysonmacaulay} (for the package
\verb+Resultants+ see \cite{stagliano2017package}), taking into account Definition \ref{characteristic polynomial} modified according to the given change of coordinates:
{\footnotesize
\begin{verbatim}
loadPackage "Resultants"; KK=QQ[t]; R=KK[z_0..z_3];
f=342*z_1^3+581*z_1^2*z_2+192*z_1^2*z_3+498*z_1*z_2*z_3+139*z_2^2*z_3+24*z_1*z_3^2+48*z_2*z_3^2+4*z_3^3;
F_0=z_0^2-(-z_1^2+z_2*z_3); F_1=diff(z_1,f)/3+t*z_0*z_1;
F_2=diff(z_2,f)/3+diff(z_3,f)/3-t*z_0*(z_2+z_3)/2; F_3=diff(z_2,f)/3-diff(z_3,f)/3+t*z_0*(z_2-z_3)/2;
characteristic_polynomial=Resultant({F_0,F_1,F_2,F_3}, Algorithm=>Macaulay)
\end{verbatim}
}
\noindent The output of \verb+characteristic_polynomial+ is
{\footnotesize
\begin{align*}
\psi_g(\lambda) & =22405379203945800000 \lambda^{12}+1737672597491537284396875 \lambda^{10}+45686609440492531312122181875 \lambda^{8}\\
&\quad+538619871002221271247213134552625 \lambda^{6}+2746031584320556852962647720783548350 \lambda^{4}\\
&\quad+2137752598886514957981090279414043391031 \lambda^{2}+13843807659909379464027427753236120270069196.
\end{align*}
}
\noindent Since a general cubic ternary form has seven E-eigenvalues, we expect that $\deg(\psi_f)=14$, but in this case $\deg(\psi_f)=12$. This confirms that $f$ has one isotropic eigenvector and six E-eigenvectors (counted with multiplicity) up to sign.
\newpage
\section*{Acknowledgement}

Luca Sodomaco is member of INDAM-GNSAGA. This paper has been partially supported by the Strategic Project ``Azioni di gruppi su variet\`a e tensori'' of the University of Florence. The author is very grateful to his advisor Giorgio Ottaviani for valuable guidance. Moreover, he warmly thanks Matteo Gallet for Remark \ref{rmk: Matteo Gallet} and other appreciated suggestions.


\begin{thebibliography}{999}

\bibitem{aluffi00} P. Aluffi,
``Weighted Chern-Mather classes and Milnor classes of hypersurfaces'', \emph{Advanced Studies in Pure Mathematics} 29 (2000), pp. 1-20, Kinokuniya, Tokyo.

\bibitem{cartwright2013number} D. Cartwright and B. Sturmfels,
``The number of eigenvalues of a tensor'', \emph{Linear algebra and its applications} 438.2 (2013), pp. 942-952.

\bibitem{cox2006using} D. A. Cox, J. Little and D. O'Shea,
\emph{Using algebraic geometry}, Vol. 185, Springer-Verlag, New York, 2005.

\bibitem{draisma2017best} J. Draisma, G. Ottaviani and A. Tocino,
``Best rank-$k$ approximations for tensors: generalizing Eckart-Young'', \emph{arXiv:1711.06443} (2017).

\bibitem{fornaess1994complex} J. E. Forn{\ae}ss and N. Sibony,
``Complex dynamics in higher dimensions I'', \emph{Ast{\'e}risque} 222 (1994), pp. 201-231.

\bibitem{gelfand2008discriminants} I. M. Gelfand, M. Kapranov and A. Zelevinsky, \emph{Discriminants, resultants, and multidimensional determinants}, Springer Science \& Business Media, New York, 1994.

\bibitem{graysonmacaulay} D. R. Grayson and M. E. Stillman,
\emph{Macaulay 2: a software system for research in algebraic geometry available at http://www.math.uiuc.edu}.

\bibitem{holme1988geometric} A. Holme,
``The geometric and numerical properties of duality in projective algebraic geometry'', \emph{Manuscripta mathematica} 61.2 (1988), pp. 145-162.

\bibitem{hu2013determinants} S. Hu, Z. H. Huang, C. Ling and L. Qi,
``On determinants and eigenvalue theory of tensors'', \emph{Journal of Symbolic Computation} 50 (2013), pp. 508-531.

\bibitem{koutshan2010telescoping} C. Koutschan,
``A Fast Approach to Creative Telescoping'',
\emph{Mathematics in Computer Science} 4.2-3 (2010), pp. 259-266.

\bibitem{kozhasov2017fully} K. Kozhasov,
``On fully real eigenconfigurations of tensors'', \emph{arXiv:1707.04005} (2017).

\bibitem{landsberg2011tensors} J. M. Landsberg,
\emph{Tensors: Geometry and Applications}, Graduate studies in mathematics, American Mathematical Society, Providence, 2011.

\bibitem{li2012characteristic} A. M. Li, L. Qi and B. Zhang,
``E-characteristic polynomials of tensors'', \emph{Communications in Mathematical Sciences} 11.1 (2013), pp. 33-53.

\bibitem{lim2005singular} L. H. Lim,
``Singular values and eigenvalues of tensors: a variational approach'',
Proc. IEEE Internat. Workshop on Comput. Advances in Multi-
Sensor Adaptive Processing (CAMSAP 2005), pp. 129-132.

\bibitem{maccioni1972number} M. Maccioni,
``The number of real eigenvectors of a real polynomial'', \emph{Bollettino dell'Unione Matematica Italiana} (2016), pp. 1-21.

\bibitem{ni2007degree} G. Ni, L. Qi, F. Wang and Y. Wang,
``The degree of the E-characteristic polynomial of an even order tensor'', \emph{Journal of Mathematical Analysis and Applications} 329.2 (2007), pp. 1218-1229.

\bibitem{oeding2013eigenvectors} L. Oeding and G. Ottaviani,
``Eigenvectors of tensors and algorithms for Waring decomposition'', \emph{Journal of Symbolic Computation} 54 (2013), pp. 9-35.

\bibitem{qi2007eigenvalues} L. Qi,
``Eigenvalues and invariants of tensors'', \emph{Journal of Mathematical Analysis and Applications} 325.2 (2007), pp. 1363-1377.

\bibitem{qi2005eigenvalues} L. Qi,
``Eigenvalues of a real supersymmetric tensor'', \emph{Journal of Symbolic Computation} 40.6 (2005), pp. 1302-1324.

\bibitem{qi2017tensor} L. Qi and Z. Luo,
\emph{Tensor analysis: Spectral theory and special tensors},
SIAM, Philadelphia, 2017.

\bibitem{stagliano2017package} G. Staglian{\`o},
``A package for computations with classical resultants'', \emph{arXiv:1705.01430} (2017).

\bibitem{sturmfels2008algorithms} B. Sturmfels,
\emph{Algorithms in Invariant Theory},
Texts \& Monographs in Symbolic Computation, Springer-Verlag Wien, 2008.

\bibitem{tevelev2007projectively} E. A. Tevelev,
``Projectively dual varieties of homogeneous spaces'',
\emph{London Mathematical Society lecture note series} 338 (2007), p. 183.

\bibitem{zeilberger1990holonomic} D. Zeilberger,
``A holonomic system approach to special functions identities'',
\emph{Journal of Computational and Applied Mathematics} 32.3 (1990), pp. 321-368.

\bibitem{zeilberger1991telescoping} D. Zeilberger,
``The method of creative telescoping'',
\emph{Journal of Symbolic Computation} 11 (1991), pp. 195-204.
\end{thebibliography}
\end{document}